\definecolor{mygreen}{rgb}{0.1,0.5,0.1}
\begin{document}


\title{An Eyring--Kramers law for the stochastic \\ Allen--Cahn equation in
dimension two}
\author{Nils Berglund, Giacomo Di Ges\`u and Hendrik Weber}
\date{January 20, 2017}   

\maketitle

\begin{abstract}
We study spectral Galerkin approximations of an Allen--Cahn equation over the
two-dimensional torus perturbed by weak space-time white noise of strength
$\sqrt{\eps}$. We introduce a Wick renormalisation of the equation in order to
have a system that is well-defined as the regularisation is removed. We show
sharp upper and lower bounds on the transition times from a neighbourhood of the
stable configuration $-1$ to the stable configuration $1$ in the asymptotic
regime $\eps \to 0$. These estimates are uniform in the discretisation parameter
$N$, suggesting an Eyring--Kramers formula for the limiting renormalised
stochastic PDE. The effect of the \lq\lq infinite renormalisation\rq\rq\ is to
modify the prefactor and to replace the ratio of determinants in the
finite-dimensional Eyring--Kramers law by a renormalised Carleman--Fredholm
determinant.
\end{abstract}

\leftline{\small 2010 {\it Mathematical Subject Classification.\/} 
60H15, 
35K57 (primary),    
81S20,   
82C28 (secondary).  
}
\noindent{\small{\it Keywords and phrases.\/}
Stochastic partial differential equations, 
metastability, 
Kramers' law, 
renormalisation, 
potential theory, 
capacities,
spectral Galerkin approximation,
Wick calculus.
}  


\section{Introduction}
\label{sec_intro}

Metastability is a common physical phenomenon in which a system spends a long
time in metastable states before  reaching  its equilibrium.  One of the most
classical mathematical models where this phenomenon has been studied rigorously
is the overdamped motion of a particle in a potential $V$, given by the It\^o
stochastic differential equation
\begin{equation}\label{e:SDE1}
\6 x(t) = -\nabla V(x(t))\6t + \sqrt{2 \eps} \6w(t)\;.
\end{equation}
For small noise strength $\eps$ solutions of \eqref{e:SDE1} typically spend long
stretches of time near local minima of the potential $V$ with occasional,
relatively quick  transitions between these local minima. The mean transition
time between  minima is then governed by the Eyring--Kramers law
\cite{eyring1935,Kramers}: If $\tau$ denotes the expected hitting time of a
neighbourhood of a local minimum $y$ of the solution of \eqref{e:SDE1} started
in another minimum $x$, and under suitable assumptions on the potential $V$, the
Eyring--Kramers law  gives the asymptotic expression
\begin{equation}\label{e:KE1}
\E[ \tau] = \frac{2 \pi}{|\lambda_0 (z)|}  \sqrt{\frac{|\det D^2 V(z)| }{ \det
D^2 V(x)}} \e^{[V(z) -V(x)]/\eps} [1+ \orderone{\eps}]\;,
\end{equation}
where $z$ is the \emph{relevant saddle} between $x$ and $y$, and $\lambda_0(z)$
is the (by assumption) unique negative eigenvalue of the Hessian $D^2 V(z)$
(more precise bounds on the error term $\orderone{\eps}$ are available). The
right exponential rate in this formula was established rigorously
using large deviation theory \cite{FreidlinWentzell}. Rigorous proofs including
the prefactor were given in
\cite{sugiura1995metastable,BEGK,HelfferKleinNier04}, see
\cite{berglund2011kramers} for a recent survey.

It is natural to study metastability in high- or infinite-dimensional systems
and to seek an extension of the Eyring--Kramers law. In this direction the
Allen--Cahn equation perturbed by a small noise term is a natural model to
study. It is given by the stochastic PDE
\begin{equation}\label{e:ACE}
\partial_t \phi(t,x) = \Delta \phi(t,x) - (\phi(t,x)^3 - \phi(t,x)) + \sqrt{2
\eps} \xi(t,x)\;,
\end{equation}
where $\xi$ is a noise term to be described below. Just like in~\eqref{e:SDE1}
the deterministic part of this equation (set $\xi = 0$ in \eqref{e:ACE}) has
gradient form and the relevant potential is given by
\begin{equation}
\label{eq:potential} 
V(\phi) = \int  
\Bigl(\frac12 |\nabla \phi|^2 - \frac12 \phi^2 +\frac14 \phi^4 \Bigr) \6x\;.
\end{equation}
The natural  choice of noise term $\xi$ is (at least formally) given by
space-time white noise because  this choice is compatible with the scalar
product used to define the deterministic gradient flow and  it makes the
dynamics given by \eqref{e:ACE} reversible (in the sense that the system
satisfies detailed balance when in its statistical equilibrium state). The
constant profiles $\phi_{\pm}(x) =  \pm 1$ are stable solutions of the
deterministic system and it is natural to ask how long a small noise term
typically needs to move the system from one of these stable profiles to the
other one. 

In the case where equation~\eqref{e:ACE} is solved over a $1+1$-dimensional
time-space domain $(t,x) \in [0,\infty) \times [0,L]$  this question was 
first studied in~\cite{FarisJona} on the level of large deviations,
yielding the correct exponent in~\eqref{e:KE1}. The problem of obtaining sharper
asymptotics with the correct prefactor was first considered
in~\cite{BarretBovierMeleard}, and infinite-dimensional versions of the
Eyring--Kramers formula were established in~\cite{Barret2,berglund2013sharp}.
Let $\tau$ denote the first-hitting time of a solution of \eqref{e:ACE}
starting near the constant profile $\phi_-$ of a neighbourhood of the constant
profile $\phi_+=1$. In \cite{berglund2013sharp} it was shown, for example in the
case where \eqref{e:ACE} is endowed with periodic boundary conditions on $[0,L]$
and $L<2\pi$, that 
\begin{equation}\label{e:KE2}
\E [\tau ]=  \frac {2 \pi}{ |\lambda_0|}\prod_{k \in \Z}
\sqrt{\frac{|\lambda_k|}{\nu_k}}  \e^{[V(\phi_0)-V(\phi_-)]/\eps} [1+
\orderone{\eps}]\;,
\end{equation}
where $k$ plays the role of a wave number in a Fourier decomposition.
The purpose of the condition $L < 2\pi$ is to ensure that the constant profile
$\phi_0 = 0$ is the relevant saddle between the two stable minima $\phi_{\pm}$;
but situations for longer intervals and different choices of boundary conditions
are also described in \cite{berglund2013sharp}. The sequences $\lambda_k$,
$\nu_k$ appearing in this expression are the eigenvalues of the Hessian of $V$
around $\phi_0$ and $\phi_-$, the operators $ - \partial_x^2 -1$ and
$-\partial_x^2 +2$, both endowed with periodic boundary condition (the
corresponding eigenfunctions being simply Fourier modes). All of these
eigenvalues are strictly positive, except for $\lambda_0 = -1$. Leaving out the
factor $k=0$, the infinite product in \eqref{e:KE2} can be written as
\begin{equation}\label{det1}
\prod_{k \neq 0} \sqrt{\frac{\lambda_k}{\nu_k}} 
= \prod_{k \neq 0} \sqrt{\biggl(1 + \frac{\nu_k -
\lambda_k}{\lambda_k}\biggr)^{-1}} 
= \frac{1}{\sqrt{\det(\mathrm{Id} + 3P_\perp(-\partial_x^2 {}-{}
1)^{-1})}}\;,
\end{equation}
where $P_\perp$ projects on the complement of the $k=0$ mode and the
operator $(-\partial_x^2 {}-{} 1)$ acts on zero-mean functions. This expression
converges, because $P_\perp(-\partial_x^2 -  1)^{-1}$ is a trace-class
operator, so that the infinite-dimensional (Fredholm) determinant is
well-defined (see for instance~\cite{Forman1987,Maier_Stein_PRL_01}).

When trying to extend this result to higher spatial dimensions two problems
immediately present themselves. First, for spatial dimension $d \geqs 2$   the
Allen--Cahn equation as stated in \eqref{e:ACE} fails to be well-posed: in this
situation already the linear stochastic heat equation (drop the non-linear term
$-(\phi^3 -\phi)$ in \eqref{e:ACE}) has distribution-valued solutions due to
the irregularity of the white noise $\xi$.
In this regularity class $-(\phi^3- \phi)$ does not have a canonical definition.
As an illustration for the problems caused by this irregularity, it was shown
in~\cite{HairerRyserWeber}
that for fixed noise strength $\eps$ finite-dimensional spectral Galerkin 
approximations\footnote{In fact, in \cite{HairerRyserWeber,daPratoDebussche} the
nonlinearity $\phi^3$ is not projected onto a finite dimensional space, but this
does not affect the result.}
\begin{equation}\label{e:ACE-approx}
\partial_t \phi_N = \Delta \phi_N -  (P_N \phi_N^3 - \phi_N) + \sqrt{2 \eps}
\xi_N
\end{equation} 
defined over a two-dimensional torus converge to a trivial limit as the
approximation parameter $N$ goes to $\infty$ (precise definitions of the
finite-dimensional noise $\xi_N$ and the projection operator $P_N$ are given in
Section \ref{sec_results} below). A second related problem is, that for $d \geqs
2$ the infinite product appearing in \eqref{e:KE2} converges to $0$,
corresponding to the fact that for $d \geqs 2$  the operator $3P_\perp(-\Delta
{}-{} 1)^{-1}$ fails to be trace-class so that the Fredholm determinant
$\det(\mathrm{Id} {}+{} 3P_\perp(-\Delta {}-{} 1)^{-1})$ is not
well-defined.

On the level of the $N \to \infty$ limit for fixed $\eps$ the idea of
renormalisation, inspired by ideas from Quantum Field Theory (see e.g.
\cite{Glimm_Jaffe_81}), has been very successful over the last years. Indeed, in
\cite{daPratoDebussche} it was shown that in the two-dimensional case, if the
approximations in \eqref{e:ACE-approx} are replaced by 
\begin{equation}\label{e:ACE2}
\partial_t \phi_N = \Delta \phi_N - (P_N \phi_N^3 - 3\eps C_N \phi_N-  \phi_N) +
\sqrt{2 \eps} \xi_N\;,
\end{equation}
for a particular choice of logarithmically divergent constants $C_N$ (see
\eqref{eq:cN}  below), the solutions do converge to a non-trivial limit which
can be interpreted as \emph{renormalised solutions} of \eqref{e:ACE}. This
result (for a different choice of $C_N$) was spectacularly extended to three
dimensions in Hairer's pioneering work on regularity structures \cite{Hairer}.
For spatial dimension $d \geqs 4$, equation \eqref{e:ACE} fails to satisfy a
\emph{subcriticality condition} (see \cite{Hairer}) and non-trivial renormalised
solutions are not expected to exist. 

Note that formally the extra term $3\eps C_N\phi_N$ moves the stable
solutions further apart to $\pm \sqrt{3\eps C_N +1}$ (and ultimately to $\pm
\infty$ as $N \to \infty$). Note furthermore that while the constants $C_N$
diverge as $N$ goes to $\infty$, for fixed $N$ they are multiplied with a small
factor $\eps$. This suggests that in the small-noise regime the renormalised
solutions may still behave as perturbations of the Allen--Cahn equation, despite
the presence of the infinite renormalisation constant. In \cite{HairerWeber}
this intuition was confirmed on the level of large deviations. There it was
shown that both in two and three dimensions the renormalised stochastic PDE 
satisfies a large-deviation principle as $\eps \to 0$, with respect to a
suitable topology and with rate functional given by 
\begin{equation}
\label{eq:LDP} 
\mathcal{I}(\phi) = \int_0^T \!\!\!\int \big( \partial_t \phi - \big( \Delta
\phi - (\phi^3 - \phi) \big) \big)^2 \6x\6t\;,
\end{equation}
which is exactly the \lq\lq correct\rq\rq\ rate functional one would obtain by
formally applying Freidlin--Wentzell theory to \eqref{e:ACE} without any regard
to renormalisation. 
Results in a similar spirit had previously been obtained in
\cite{jona1990large,aida2009semi}.

The purpose of this article is to show that the renormalised solutions have the
right asymptotic small-noise behaviour even beyond large deviations,  and to
establish an Eyring--Kramers formula in this framework. As remarked also
in~\cite{Rolland_Bouchet_Simonnet_16} nothing seems to be known at this level so
far. The key observation is that the introduction of the infinite constant not
only permits  to define the dynamics, but that it also fixes the problem of
vanishing prefactor in the Eyring--Kramers law \eqref{e:KE2}. More
precisely, we argue that in two dimensions the correct Eyring--Kramers formula
for the renormalised SPDE is
\begin{equation}\label{e:KE3}
\E[ \tau] = \frac{2 \pi}{|\lambda_0|} \sqrt{\prod_{k \in \Z^2} 
\frac{\abs{\lambda_k}}{\nu_k}
\,\exp\biggset{\frac{\nu_k - \lambda_k}{\abs{\lambda_k}}}} 
\e^{[V(\phi_0)-V(\phi_-)]/\eps} [1+ \orderone{\eps}]\;,
\end{equation}
where as above the $\lambda_k$ and $\nu_k$   are the eigenvalues of $-\Delta -1$
and $-\Delta +2$, now  indexed by a vectorial wave number $k \in \Z^2$. In
functional-analytic terms this means that due to the presence of the infinite
renormalisation constant the regular determinant from \eqref{e:KE2} is replaced
by a renormalised or Carleman--Fredholm determinant of the operator $\mathrm{Id}
+ 3P_\perp(-\Delta-1)^{-1}$. Unlike the \lq\lq usual\rq\rq\ determinant,  the
Carleman--Fredholm determinant is defined for the  class of Hilbert--Schmidt
perturbations of the identity and not only for the smaller class of trace-class
perturbations of the identity. Recall, that $(-\Delta -1)^{-1}$ is
Hilbert--Schmidt both in two and three dimensions, but not for $d \geqs 4$. It
is striking to note that these are exactly the dimensions in which a
renormalised solution to the Allen--Cahn (or $\Phi^4$) equation can be
constructed. 

In order to illustrate our result in the easiest possible situation we only
consider the case of the Allen--Cahn equation in a small domain $\T^2 = [0,L]^2$
of size $L < 2 \pi$ with periodic boundary conditions. As in the one-dimensional
case this assumption guarantees that the constant profile $\phi_0$ is the
relevant saddle. We make use of the $\pm 1$ symmetry of the system to simplify
some arguments. Throughout the article, we work in the framework of the
finite-dimensional spectral Galerkin approximation \eqref{e:ACE2} and
derive  asymptotic
bounds for the expected transition time as $\eps \to 0$ which are uniform in the
approximation parameter $N \to \infty$. 

On the technical level, our analysis builds on the potential-theoretic approach
developed in \cite{BEGK}, which allows to express expected transition times
in terms of committor functions and capacities, that can be estimated using a
variational principle. As we work in finite dimensions throughout, we can avoid
making any use of the analytic tools developed in recent years to deal with
singular SPDEs. A crucial idea is to change point of view with respect to
the usual finite dimensional setting as presented in
\cite{Bovier_denHollander_book,BEGK}  and to regard capacities and partition
functions as expectations of random variables under Gaussian measures, which are
well-defined in infinite dimension. This idea already appeared in
\cite{DiGesuLePeutrec_15} for the analysis of metastability of the Allen-Cahn
equation in space dimension $d=1$. In the present setting ($d=2$) the new point
of view is particularly powerful, since expectations under Gaussian measures can
be estimated using Wick calculus, and in particular the classical Nelson
argument \cite{Nelson73} which permits to bound expectations of exponentials of
Hermite polynomials. Another key argument is the observation from
\cite{berglund2007metastability} that the field $\phi$ can be decomposed into
its average and fluctuating part and that the (non-convex) potential $V$ is
convex in the transverse directions (see Lemma~\ref{lem:wo3}). An
additional key idea, following \cite[Section 3.2]{DiGesuLePeutrec_15}, is to
avoid using Hausdorff--Young inequalities in the discussion of Laplace
asymptotics (see \cite{BarretBovierMeleard,berglund2013sharp}) and rather use
Taylor expansions and global estimates, which lead to much better error
estimates, both in $\eps$ and in $N$. The rest of this paper is structured as
follows: in Sections~\ref{sec_results} and \ref{sec_outline} we  give the
precise assumptions, state our main theorem and give the necessary
background from potential theory. Lower bounds on the expected transition time
are proved in Section~\ref{sec_lb}, upper bounds are proved in
Section~\ref{sec_ub}. Some well-known facts about
Hermite polynomials and Wick powers are collected in Appendix~\ref{sec_Wick}.

\medskip

\noindent{\bf Acknowledgements:} The authors gratefully acknowledge the
hospitality and financial support they received during reciprocal visits at the
University of Warwick and the Universit\'e d'Orl\'eans.  GDG gratefully
acknowledges the support of the European Research Council under the European
Union's Seventh Framework Programme (FP/2007-2013) / ERC Grant Agreement number
614492. HW gratefully acknowledges support from the EPSRC through an EPSRC First
Grant and the Royal Society through a University Research Fellowship.
Finally, the authors would like to thank the two anonymous referees for
their careful reading of the first version of this manuscript, which led to
substantial improvements in the presentation.


\section{Results}
\label{sec_results}

Let $\T^2 = \R^2/(L\Z)^2$ denote the two-dimensional torus of size $L\times
L$. We are interested in the renormalised Allen--Cahn equations
\begin{equation}
 \partial_t \phi = \Delta \phi + \bigbrak{1+3 \eps C_N} \phi - \phi^3+
\sqrt{2\eps} \,\xi_N
\end{equation} 
for $\phi=\phi(t,x) : \R^+ \times \T^2 \to \R$, where $\xi_N$ approximates
space-time white noise on the scale $1/N$. In fact, we will consider spectral
Galerkin approximations of the above equation. Let $e_k(x) =
L^{-1}\e^{\icx(2\pi/L) k\cdot x}$ denote $L^2$-normalised Fourier basis
vectors of $L^2(\T^2)$, where $k\in\Z^2$ is the wave vector. Denote by $P_N$ the
projection on Fourier modes with wave number $k$ satisfying $\abs{k} =
\max\{\abs{k_1},\abs{k_2}\}\leqs N$, that is,
\begin{equation}
(P_N\phi)(x) = \sum_{k\in\Z^2 \colon \abs{k}\leqs N} 
 \langle \phi, e_k \rangle e_k(x)\;.
\end{equation} 
Then we consider the sequence of equations  
\begin{equation}
 \label{eq:Allen-Cahn} 
 \partial_t \phi = \Delta \phi + \bigbrak{1+3 \eps C_N} \phi - P_N \phi^3+
\sqrt{2\eps} \,\xi_N
\end{equation} 
where $\xi_N = P_N\xi$ is the spectral Galerkin approximation of space-time
white noise $\xi$.

We will assume periodic boundary conditions (b.c.), with a domain
size satisfying $0<L<2\pi$. This assumption guarantees that the identically zero
function plays the role of the transition state, which separates the basins
of attraction of the two stable solutions $\phi_\pm=\pm1$ of the deterministic
Allen-Cahn equation. 

Note that the deterministic system (take $\eps=0$ in~\eqref{eq:Allen-Cahn})
is a gradient system, with potential 
\begin{equation}
\label{eq:V_phi} 
 V_N[\phi] 
 = \frac12 \int_{\T^2} \bigbrak{\norm{\nabla \phi(x)}^2-\phi^2(x)} \6x 
 + \frac14
\int_{\T^2} \bigbrak{\phi^4(x)-6\eps C_N\phi^2(x) + 3\eps^2C_N^2} \6x\;.
\end{equation}
The measure $\e^{-V_N/\eps}$ is an invariant, reversible measure of the
stochastic system~\eqref{eq:Allen-Cahn}, and we will denote by $\cZ_N(\eps)$
its normalisation (the partition function of the system). 
The constant term $3\eps^2C_N^2$ in the second integral is of course irrelevant
for the dynamics, but it will simplify notations. This is related to the fact
that 
\begin{equation}
 \phi^4(x)-6\eps C_N\phi^2(x) + 3\eps^2C_N^2 
 = H_4(\phi(x),C_N)
\end{equation} 
is the so-called Wick renormalisation of $\phi^4(x)$ with respect to the
centered Gaussian measure  having covariance $\abs{P_N[-\Delta-1]^{-1}}$,
often denoted $\Wick{\phi^4(x)}$, where $H_4$ is the fourth Hermite
polynomial (see Section~\ref{sec_lb} and Appendix~\ref{sec_Wick}). The
renormalisation constant $C_N$ is given by 
\begin{equation}
\label{eq:cN} 
 C_N = \frac{1}{L^2} \Tr\bigpar{\abs{P_N[-\Delta-1]^{-1}}}
 := \frac{1}{L^2}\sum_{k\in\Z^2 \colon \abs{k}\leqs N}
\frac{1}{\abs{\lambda_k}}
\end{equation} 
where $\lambda_k = (2\pi/L)^2(k_1^2 +k_2^2)-1$.  Therefore, $C_N$ diverges
logarithmically as 
\begin{equation}
 \label{eq:cNlog}
 C_N \asymp \frac{2\pi}{L^2}\log(N)\;.
\end{equation} 
The choice of $C_N$ is somewhat arbitrary, as adding a constant independent of
$N$ to $C_N$ will also yield a well-defined limit equation as $N\to\infty$. See
Remark~\ref{rem:CN} below for the effect of such a shift on the results.  

\begin{figure}[tb]
\begin{center}
\begin{tikzpicture}
[>=stealth',x=3cm,y=3cm,main node/.style={draw,circle,fill=white,minimum
size=3pt,inner sep=0pt}]

\newcommand*{\del}{0.2}
\newcommand*{\h}{0.25}
\newcommand*{\hh}{0.15}
\pgfmathsetmacro{\myrho}{1 - \del}

\path[-,fill=green!70!blue!50] ({-\myrho},{\hh}) -- ({\myrho},{\hh})
-- ({\myrho},{-\hh}) -- ({-\myrho},{-\hh}) -- ({-\myrho},{\hh});

\path[-,fill=blue!50] ({-1-\del},{\h}) -- ({-1+\del},{\h}) -- ({-1+\del},{-\h})
 -- ({-1-\del},{-\h}) -- ({-1-\del},{\h});
 
 \path[-,fill=blue!50] ({1-\del},{\h}) -- ({1+\del},{\h}) -- ({1+\del},{-\h})
 -- ({1-\del},{-\h}) -- ({1-\del},{\h});

\draw[green!70!blue,thick] ({-\myrho},{\hh}) -- ({\myrho},{\hh}) --
({\myrho},{-\hh})
 -- ({-\myrho},{-\hh}) -- ({-\myrho},{\hh});

\draw[blue,thick] ({-1-\del},{\h}) -- ({-1+\del},{\h}) -- ({-1+\del},{-\h})
 -- ({-1-\del},{-\h}) -- ({-1-\del},{\h});

\draw[blue,thick] ({1-\del},{\h}) -- ({1+\del},{\h}) -- ({1+\del},{-\h})
 -- ({1-\del},{-\h}) -- ({1-\del},{\h});
 
\draw[->,thick] (-1.6,0) -- (1.6,0);
\draw[->,thick] (0,-0.6) -- (0,0.6);

\node[main node, semithick] at (-1,0) {}; 
\node[main node, semithick] at (1,0) {}; 

\node[] at (1.45,-0.1) {$\bar\phi$};
\node[] at (0.1,0.45) {$\phi_\perp$};
\node[] at (-1,-0.1) {$-L$};
\node[] at (1,-0.1) {$L$};

\node[blue] at (-1,{\h + 0.1}) {$A$};
\node[blue] at (1,{\h + 0.1}) {$B$};
\node[green!70!blue] at ({0.5*\myrho},{\hh + 0.1}) {$D$};

\draw[<->,blue,semithick] ({-1-\del},{-\h-0.15}) -- ({-1+\del},{-\h-0.15});
\node[blue] at (-1,{-\h - 0.23}) {$2\delta$};

\draw[<->,green!70!blue,semithick] (0,{-\hh-0.15}) -- ({1-\del},{-\hh-0.15});
\node[green!70!blue] at ({0.5*\myrho},{-\hh -0.23}) {$\rho$};

\end{tikzpicture}
\end{center}
\vspace{-3mm}
\caption[]{Geometry of the neighbourhoods $A$ and $B$ of the
deterministic stable solutions $\phi_\pm(x)$. The set $D$ will be needed
later in the proof, cf.~Section~\ref{ssec_capacity_lower}.}
\label{fig:lowerbound} 
\end{figure}
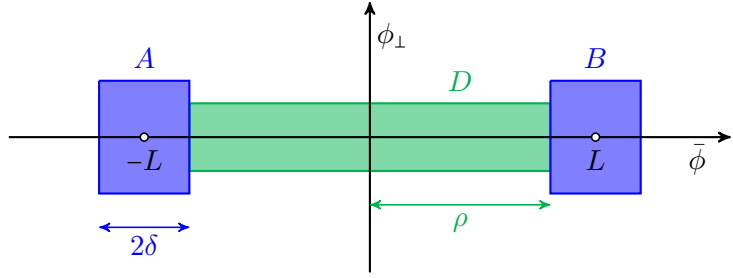

In the deterministic case $\eps=0$, the Allen--Cahn
equation~\eqref{eq:Allen-Cahn} has two stable stationary solutions given by
$\phi_-(x) = -1$  and $\phi_+(x) = 1$. We are interested in obtaining sharp
asymptotics on the expectation of the first-hitting time $\tau_B$ of a
neighbourhood $B$ of $\phi_+$, when starting near
$\phi_-$. The neighbourhood $B$ should have a minimal size. More precisely, 
we decompose any function $\phi:\T^2\to\R$ into its mean and oscillating part
by setting 
\begin{equation}
 \label{eq:phi_dec}
 \phi(x) = \bar\phi + \phi_\perp(x)
\end{equation} 
where $\bar\phi = L^{-2} \int_{\T^2} \phi(x)\6x$ and the
integral of $\phi_\perp$ over $\T^2$ is zero. Then we define two symmetric sets
$A$ and $B$ as follows (see~\figref{fig:lowerbound}).

\begin{definition}
\label{def:Dperp} 
Fix constants $\delta\in(0,1)$ and $s<0$, and let $D_\perp$ be a ball of
radius $r\sqrt{\eps\log(\eps^{-1})}$ in the Sobolev space $H^s(\T^d)$, 
where $r$ is a sufficiently large numerical constant. Then
\begin{align}
\nonumber
 A &= \bigsetsuch{\phi\in H^s(\T^d)}{\bar\phi \in [-1-\delta,-1+\delta], 
 \phi_\perp\in D_\perp}\;,\\
 B &= \bigsetsuch{\phi\in H^s(\T^d)}{\bar\phi \in [1-\delta,1+\delta], 
 \phi_\perp\in D_\perp}\;.
 \label{eq:AB}
\end{align}
\end{definition}

\begin{remark}
The definition of $D_\perp$ ensures that $A\cup B$ contains most of the mass
of the invariant probability measure $\cZ_N^{-1}\e^{-V_N/\eps}$ of the equation.
In fact, the result remains true if we replace $D_\perp$ by any sufficiently
regular set containing $D_\perp$, see Lemma~\ref{lem:cap_lb02}. 
\end{remark}

Our main result for periodic b.c.\ is as follows (recall from the
introduction that $\nu_k = (2\pi/L)^2(k_1^2 +k_2^2)+2 = \lambda_k+3$). 

\begin{theorem}
\label{thm:periodic_bc}
Assume $0< L < 2\pi$. There exists a sequence $\set{\mu_N}_{N\geqs1}$ of
probability measures concentrated on $\partial A$ such that in the case of
periodic b.c.,
\begin{align}
\nonumber
 \limsup_{N\to\infty} \bigexpecin{\mu_N}{\tau_B}
 &\leqs \frac{2 \pi}{|\lambda_0|} \sqrt{\prod_{k \in \Z^2} 
\frac{\abs{\lambda_k}}{\nu_k}
\,\exp\biggset{\frac{\nu_k - \lambda_k}{\abs{\lambda_k}}}}
\e^{[V(\phi_0)-V(\phi_-)]/\eps}
\bigbrak{1 + c_+\sqrt{\eps}\,}\;,\\
  \liminf_{N\to\infty} \bigexpecin{\mu_N}{\tau_B}
 &\geqs \frac{2 \pi}{|\lambda_0|} \sqrt{\prod_{k \in \Z^2} 
\frac{\abs{\lambda_k}}{\nu_k}
\,\exp\biggset{\frac{\nu_k - \lambda_k}{\abs{\lambda_k}}}}
\e^{[V(\phi_0)-V(\phi_-)]/\eps} 
\bigbrak{1 - c_-\eps}\;,
\label{eq:EK_periodic}
\end{align} 
where the constants $c_\pm$ are uniform in $\eps$.
\end{theorem}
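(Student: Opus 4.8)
The plan is to run the potential-theoretic machinery of \cite{BEGK} inside the finite-dimensional Galerkin system \eqref{eq:Allen-Cahn}, pushing every asymptotic expansion to a multiplicative error $1+\Order{\sqrt\eps}$ that is moreover uniform in $N$. I would take for $\mu_N$ the equilibrium measure on $\partial A$ of the capacitor $(A,B)$ for the generator of \eqref{eq:Allen-Cahn}. As recalled in Section~\ref{sec_outline}, this measure obeys the exact identity
\begin{equation*}
  \expecin{\mu_N}{\tau_B}=\frac{\int h_{A,B}(\phi)\,\e^{-V_N(\phi)/\eps}\,\6\phi}{\capacity_N(A,B)}\;,
\end{equation*}
where $h_{A,B}$ is the committor ($h_{A,B}=1$ on $A$, $h_{A,B}=0$ on $B$, harmonic for the generator in between) and $\capacity_N(A,B)=\inf\bigsetsuch{\eps\int\norm{\nabla h}^2\e^{-V_N/\eps}\6\phi}{h\vert_A=1,\ h\vert_B=0}$. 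So the proof reduces to (i) matching upper and lower bounds on $\capacity_N(A,B)$ and (ii) matching bounds on the numerator $\cN_N:=\int h_{A,B}\,\e^{-V_N/\eps}\6\phi$.

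For the numerator, I would first show that $h_{A,B}$ differs from $1$ by at most $\e^{-c/\eps}$ throughout a large part of the basin of attraction of $\phi_-$ and that $\e^{-V_N/\eps}$ charges the complement of such a neighbourhood by an exponentially smaller amount; both facts follow from a priori bounds on the process together with the convexity of $V_N$ in the directions transverse to the spatial mean $\bar\phi$. Hence $\cN_N$ agrees, up to a factor $1+o(1)$, with the local partition function $\int_A\e^{-V_N/\eps}\6\phi$. To compute the latter I would write $\phi=\bar\phi+\phi_\perp$, carry out the transverse integral first and --- rather than invoking Hausdorff--Young inequalities as in \cite{BarretBovierMeleard,berglund2013sharp} --- represent it as an expectation under the Gaussian measure obtained by freezing the transverse quadratic part of $V_N$, then Taylor-expand the remaining cubic and quartic terms. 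Thanks to the Wick renormalisation these remainders are Wick-ordered polynomials, so Nelson's argument \cite{Nelson73} bounds their exponential moments \emph{uniformly in $N$}; the outer one-dimensional integral over $\bar\phi$ is then a routine Laplace computation. This produces $\cN_N$ as an explicit product of a $d_N$-dimensional Gaussian normalisation ($d_N=\#\set{\abs k\leqs N}$), the ratio of the eigenvalues of the Hessian of $V_N$ at $\phi_-$, and $\e^{-V_N(\phi_-)/\eps}$, with relative error $1+\Order{\sqrt\eps}$.

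For the capacity, the upper bound comes from the Dirichlet variational principle by inserting a test function that is $1$ on $A$, $0$ on $B$, and in a slab around the transition state $\phi_0=0$ depends on $\phi$ only through the unstable coordinate $\bar\phi$ via the optimal one-dimensional profile of the linearised problem; the Dirichlet integral then localises in the slab and, with the transverse integral handled exactly as for $\cN_N$, yields an upper bound of the form (Gaussian normalisation)$\,\times\abs{\lambda_0^{(N)}}\times\,$(ratio of Hessian eigenvalues at $\phi_0$)$\,\times\e^{-V_N(\phi_0)/\eps}\times[1+\Order{\sqrt\eps}]$. For the matching lower bound I would use the dual (Thomson) variational principle, constructing a unit flow from $A$ to $B$ supported in a thin tube around the minimal-energy path through $\phi_0$; the transverse convexity ensures that each cross-section of the tube contributes the same Gaussian factor and that the flow may be taken essentially one-dimensional, so the bottleneck integral reproduces the same constant up to $1+\Order{\eps}$. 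Again the point is to write every transverse integral as a Nelson-controlled Gaussian expectation, so that all bounds are uniform in $N$.

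Dividing the two estimates, the divergent $d_N$-dimensional Gaussian normalisations cancel, leaving a ratio of (shifted) Hessian determinants times $\e^{[V_N(\phi_0)-V_N(\phi_-)]/\eps}$. This is where the renormalisation pays off: using the explicit form \eqref{eq:cN} of $C_N$, the $3\eps C_N$-shifts in the two determinants combine with the $C_N$-dependent part of the barrier $V_N(\phi_0)-V_N(\phi_-)$ so that all $N$-divergent contributions cancel, and what remains is exactly the right-hand side of \eqref{eq:EK_periodic} with the product truncated at $\abs k\leqs N$ --- that is, $\prod_{\abs k\leqs N}\frac{\abs{\lambda_k}}{\nu_k}\exp\bigset{\frac{\nu_k-\lambda_k}{\abs{\lambda_k}}}$, the partial product of the Carleman--Fredholm determinant of $\mathrm{Id}+3P_\perp(-\Delta-1)^{-1}$, which converges as $N\to\infty$ precisely because $(-\Delta-1)^{-1}$ is Hilbert--Schmidt in dimension two. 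Taking $\limsup$ and $\liminf$ over $N$ then yields \eqref{eq:EK_periodic}. I expect the main obstacle to be exactly this uniformity in $N$: because of the nonlinearity and the divergence $C_N\to\infty$ a term-by-term Laplace expansion is meaningless, and it is only the combination of (a) keeping every Gaussian factor as an honest finite-dimensional determinant whose divergence cancels in the ratio, (b) Nelson's bound on exponential moments of Wick powers, and (c) the transverse convexity of $V_N$ together with the $\pm1$-symmetry --- used both for the a priori bounds on the process and to keep the sign-indefinite cubic terms from destabilising the Gaussians --- that keeps all error terms uniformly controlled.
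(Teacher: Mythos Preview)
Your overall architecture is right --- potential theory, Dirichlet upper bound via a one-dimensional test function, Gaussian/Wick representation of transverse integrals, Nelson's argument for uniformity in $N$ --- and matches the paper's strategy closely. But two steps diverge from the paper, and one of them hides a real gap.

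\textbf{Numerator.} You propose to show $h_{A,B}\approx1$ on a large neighbourhood of $\phi_-$ via ``a priori bounds on the process'', and then reduce $\cN_N$ to a local partition function $\int_A\e^{-V_N/\eps}$. The paper does something much cleaner: it exploits the exact $z_0\mapsto-z_0$ symmetry (so $B=SA$, $V_N\circ S=V_N$, $h_{A,B}=1-h_{A,B}\circ S$) to prove the \emph{identity} $\int h_{A,B}\e^{-V_N/\eps}=\tfrac12\cZ_N(\eps)$, with no error term and no committor estimate whatsoever. Your route requires uniform-in-$N$ control on $h_{A,B}$ away from $B$; this is exactly the kind of process-level input the paper deliberately avoids, and Remark~\ref{rem:initial_cond} flags it as open in the present two-dimensional renormalised setting (in $d=1$ it was done via the Martinelli--Olivieri--Scoppola coupling, but no analogue is available here). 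So as written, this step is a genuine gap: you should use the symmetry identity instead.

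\textbf{Capacity lower bound.} You propose Thomson's principle with a unit flow through a tube. The paper does not use Thomson; it simply drops all but the $\partial_{z_0}$ term in the Dirichlet form and restricts the integration to a cylinder $D=[-\rho,\rho]\times D_\perp$, which already gives a lower bound. For each fixed $z_\perp$ the resulting one-dimensional variational problem in $z_0$ is solved explicitly (the Euler--Lagrange minimiser), yielding $\capacity_A(B)\geqs\eps\int_{D_\perp}\bigl(\int_{-\rho}^{\rho}\e^{V_N(z_0,z_\perp)/\eps}\6z_0\bigr)^{-1}\6z_\perp$. This is more elementary than building a divergence-free flow uniformly in $N$, and it feeds directly into the same Gaussian/Nelson machinery. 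Your Thomson approach is not wrong, but it is more work and the uniform-in-$N$ construction of the flow would itself need justification.

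The rest --- Wick-ordered remainders, Nelson bounds, the cancellation of divergent Gaussian normalisations and the emergence of the Carleman--Fredholm partial product --- is as in the paper.
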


Since  
$\nu_k = \lambda_k+3$ and $V(\phi_0)-V(\phi_-) = L^2/4$,
the leading term in~\eqref{eq:EK_periodic} can also be written as 
\begin{equation}
 2\pi
 \biggl( \frac{\e^{3/\abs{\lambda_0}}}
 {\abs{\lambda_0}(\lambda_0+3)}
 \prod_{k\neq 0} 
 \frac{\e^{3/\lambda_k}}{1+3/\lambda_k} \biggr)^{1/2}
\e^{L^2/4\eps}\;.
\end{equation}
The infinite product indeed converges, since 
\begin{equation}
 \log \biggl( \frac{\e^x}{1+x} \biggr) = x - \log(1+x) \leqs \frac12 x^2\;,
\end{equation} 
and the sum over $\Z^2$ of $\lambda_k^{-2}$ converges, unlike the sum of
$\lambda_k^{-1}$ that would arise without the regularising term
$\e^{3/\lambda_k}$. On a more abstract level, as already mentioned in the
introduction, this is due to the fact that we have replaced the usual
Fredholm determinant $\det(\mathrm{Id}+T)$ (with
$T=3\abs{(-\Delta-1)^{-1}}$) by the Fredholm--Carleman determinant
\begin{equation}
 \label{eq:det2}
 {\det}_2(\mathrm{Id}+T) = \det(\mathrm{Id}+T)\e^{-\Tr T}\;,
\end{equation} 
which is defined for every Hilbert--Schmidt perturbation $T$ of the identity, 
without the requirement of $T$ to be
trace-class~\cite[Chapter~5]{Simon_Trace_ideals}.

\begin{remark}
\label{rem:Neumann}
An analogous result holds for zero-flux Neumann b.c.\ $\partial_{x_i}
\phi(t,x) = 0$ whenever $x_i\in\set{0,L}$ for $i=1$ or $2$, provided
$0<L<\pi$. The only difference is that one has to replace $2\pi/L$ by
$\pi/L$ in
the definition of the $\lambda_k$, and that the sums are over $k\in\N_0^2$
instead of $k\in\Z^2$. 
\end{remark}

\begin{remark}
\label{rem:CN}
The definition~\eqref{eq:cN} of the renormalisation constant $C_N$ is not
unique, and one would still obtain a well-defined limit
for~\eqref{eq:Allen-Cahn} if $C_N$ were replaced by $C_N + \theta/L^2$ for some
constant $\theta\in\R$ (or even by $C_N + \theta_N/L^2$, where $\theta_N$
converges to a limit $\theta$ as $N\to\infty$). One easily checks that the
effect of such a shift in the renormalisation constant is to multiply the
expected transition time by a factor $\e^{3\theta/2\abs{\lambda_0}}$. 
\end{remark}

\begin{remark}
\label{rem:initial_cond}
The measures $\mu_N$ appearing in Theorem~\ref{thm:periodic_bc} are the
normalised equilibrium measures $\mu_{A,B}$ of the finite dimensional
dynamics of $\phi_N$. This measure appears in the result,
because the method of proof is based on an asymptotic analysis as $\eps \to 0$
of the exact formula \eqref{eq:expec_tau} for $\expecin{\mu_N}{\tau_B}$ which is
available with this initial distribution. One would expect that the precise
starting distribution does not affect the final result and that equation
\eqref{eq:EK_periodic} remains true with $\mu_N$ replaced by any distribution
which is supported on a sufficiently small neighbourhood of $\phi_-$, in
particular a deterministic initial condition at $\phi_-$. For the
one-dimensional SPDE such a result was indeed obtained in
\cite{berglund2013sharp}, by post-processing a bound involving the equilibrium
measure, building on a coupling technique developed in
\cite{Martinelli_Olivieri_Scoppola_89}. It would be interesting to see if a
similar post-processing is also possible in the context of our renormalised 
SPDE.
\end{remark}

\begin{remark}
\label{rem:limit}
In a similar spirit, it might seem more natural to state the bound
\eqref{eq:EK_periodic} in the limit $N \to \infty$, i.e. for solutions of the
full renormalized SPDE rather than stating a uniform-in-$N$ bound over the
Galerkin approximations. In the one-dimensional case, such a result was
obtained in~\cite[Proposition~3.4]{berglund2013sharp}, using various a priori
estimates available for $d=1$.
Using the technique from~\cite{daPratoDebussche} it is
straightforward to show that the approximate solutions $\phi_N$ (say all
starting in $\phi_-$) and the corresponding stopping times $\tau_N$ converge in
probability as $N \to \infty$. One can then pass  to the limit in the
expectations $\expecin{\phi_-}{\tau_N}$ (for $N \to \infty$ keeping  $\eps$
fixed) provided the random variables $\tau_N$ are uniformly integrable, which is
implied, for example, by a bound of the type $\sup_N \expecin{\phi_-}{\tau_N^p}<
\infty$ for some $p>1$. We expect that a much stronger bound of the type
$\sup_{N} \expecin{\phi_-}{\exp( \lambda \tau_N )} < \infty$ for $\lambda$ small
enough can be obtained, using the results derived in~\cite{Tsatsoulis-Weber}.
\end{remark}

%

\begin{remark}
\label{rem:epsilon}
The error term in $\sqrt{\eps}$ in the upper bound for the expected transition
time is due to our using less sharp approximations in the Laplace asymptotics.
It is in principle possible, as was done in the one-dimensional case
in~\cite{DiGesuLePeutrec_15}, to derive further terms in the asymptotic
expansion. In particular it is expected that the leading error term has
order~$\eps$.
\end{remark}


\section{Some potential theory}
\label{sec_outline}

When considering the spectral Galerkin approximations~\eqref{eq:Allen-Cahn},
it will sometimes be convenient to use Fourier variables $z_k$ defined by
\begin{equation}
 \label{eq:Fourier} 
 \phi_N(t,x) = \sum_{k\in\Z^2 \colon \abs{k}\leqs N} z_k(t) e_k(x)\;.
\end{equation}
In order to ensure that $\phi_N$ is real-valued, the
coefficients $z_k$ are chosen to take values in
\begin{align}
 \{ (z_k) \in \C^{(-N, \ldots, N)^2} \colon z_{-k} = \overline{z_{-k}}  \quad
\text{for all } k \} 
\end{align}
which we identify with $\R^{(2N+1)^2}$ throughout. In particular, we will always
interpret gradients
 and integration with respect to Lebesgue measure $\6 z$ in terms of this
identification.
The spectral Galerkin approximation with cut-off $N$ is equivalent to the
system of It\^o SDEs 
\begin{equation}
\label{eq:Ito} 
\6z(t) = -\nabla V_N(z(t))\6t + \sqrt{2\eps} \6W_t\;,
\end{equation} 
where the potential, obtained by evaluating~\eqref{eq:V_phi} in $\phi_N$, is
given by 
\begin{equation}
 V_N(z) = \frac12 \sum_{\abs{k}\leqs N} 
 \lambda_k \abs{z_k}^2 + \frac{1}{4}
\Big( \frac{1}{L^2} \sum_{\substack{k_1 + k_2 +k_3 +k_4=0 \\  \abs{k_i} \leqs
N}} 
 z_{k_1} z_{k_2} z_{k_3} z_{k_4} - 6 \eps C_N \sum_{\abs{k}\leqs N}|z_k|^2
 + 3L^2 \eps^2 C_N^2 \Big)\;.
\label{eq:V_N} 
\end{equation} 
Arguments based on potential theory (see~\cite{BEGK} and~\cite[Corollary
7.30]{Bovier_denHollander_book}) show that for any finite $N$ one has the
relation 
\begin{equation}
 \label{eq:expec_tau} 
 \bigexpecin{\mu_{A,B}}{\tau_B} 
 = \frac{1}{\capacity_A(B)} \int_{B^c} h_{A,B}(z) \e^{-V_N(z)/\eps}\6z\;. 
\end{equation} 
Here $h_{A,B}(z)$ is the \emph{committor function} (or \emph{equilibrium
potential}) 
\begin{equation}
 \label{eq:committor} 
 h_{A,B}(z) = \bigprobin{z}{\tau_A < \tau_B}\;,
\end{equation} 
where $\tau_A$ denotes the first-hitting time of a set $A\subset\R^{(2N+1)^2}$.
The term $\capacity_A(B)$ is the so-called \emph{capacity}, which admits several
equivalent expressions:
\begin{align}
\label{eq:cap1} 
\capacity_A(B) 
&= \eps \int_{(A\cup B)^c} \norm{\nabla h_{A,B}(z)}^2 \e^{-V_N(z)/\eps}\6z \\
\label{eq:cap2} 
&= \eps \inf_{h\in\cH_{A,B}}\int_{(A\cup B)^c} \norm{\nabla h}^2
\e^{-V_N(z)/\eps}\6z \\
\label{eq:cap3} 
&= \int_{\partial A} \e^{-V_N(z)/\eps} \rho_{A,B}(\6z)\;,
\end{align}
where $\cH_{A,B}$ is the set of functions $h\in H^1$ such that $h=1$ in $A$ and
$h=0$ in $B$, and $\rho_{A,B}(\6z)$ is a measure concentrated on $\partial A$,
called the \emph{equilibrium measure}. The density of this measure (with
respect to the surface measure on $\partial A$) is equal to the exterior normal
derivative of the committor function $h_{A,B}$. Finally, $\mu_{A,B}$ is the
probability measure on $\partial A$ obtained by normalising
$\rho_{A,B}\e^{-V/\eps}$:
\begin{equation}
\label{eq:nuAB} 
\mu_{A,B}(\6z) = \frac{1}{\capacity_A(B)} \e^{-V_N(z)/\eps} \rho_{A,B}(\6z)\;.
\end{equation} 

The following symmetry argument allows us to link the expected transition time
to the partition function of the system. 

\begin{lemma}
If $A$ and $B$ are symmetric with respect to the plane $z_0=0$ then 
\begin{equation}
 \label{eq:hAB_sym} 
 \int_{B^c} h_{A,B}(z) \e^{-V_N(z)/\eps} \6z 
= \frac12 \int_{\R^{(2N+1)^2}}  \e^{-V_N(z)/\eps} \6z
=: \frac12 \cZ_N(\eps)\;.
\end{equation} 
\end{lemma}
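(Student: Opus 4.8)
The plan is to exploit the $\pm 1$ symmetry of the system together with the elementary fact that the committor functions of a symmetric pair of sets sum to one. Since $A$ is symmetric about the hyperplane $\{z_0 = 0\}$ (note $z_0 = \bar\phi$ up to normalisation, so the reflection $z_0 \mapsto -z_0$ interchanges the constraints $\bar\phi\in[-1-\delta,-1+\delta]$ and $\bar\phi\in[1-\delta,1+\delta]$), the reflection map $R: z \mapsto (-z_0, z_k)_{k\neq 0}$ is an involution of $\R^{(2N+1)^2}$ carrying $A$ to $B$ and $B$ to $A$. Moreover $V_N$ is invariant under $R$: in the quadratic part $\sum_k \lambda_k|z_k|^2$ only $|z_0|^2$ is affected and it is unchanged; the quartic term $\sum_{k_1+\cdots+k_4=0} z_{k_1}z_{k_2}z_{k_3}z_{k_4}$ picks up a factor $(-1)$ for each index equal to $0$ among $k_1,\dots,k_4$, but since the $k_i$ sum to zero an odd number of them cannot vanish simultaneously while the rest sum to zero — more precisely one checks that the number of zero indices is even in every surviving monomial (if exactly one or three indices are zero the remaining nonzero ones cannot sum to zero unless... actually the cleanest statement: $V_N(\phi)=V_N(-\phi)$ as functionals, hence $V_N(Rz)=V_N(z)$); and the $\eps$-dependent lower-order terms only involve $\sum|z_k|^2$ and constants. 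So $V_N\circ R = V_N$.

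Next I would use uniqueness of the committor to deduce the key identity $h_{A,B}(Rz) = h_{B,A}(z) = 1 - h_{A,B}(z)$. Indeed $h_{A,B}$ is characterised as the unique bounded solution of $\cL h = 0$ on $(A\cup B)^c$ with $h=1$ on $A$, $h=0$ on $B$, where $\cL = \eps\Delta - \nabla V_N\cdot\nabla$; since $R$ commutes with $\cL$ (because $V_N\circ R=V_N$ and $R$ is linear orthogonal, being a coordinate reflection) and swaps the boundary data, $h_{A,B}\circ R$ solves the problem with $A$ and $B$ interchanged, giving $h_{B,A} = h_{A,B}\circ R$. Combined with $h_{A,B} + h_{B,A} = 1$ (both are committors for the same diffusion started outside $A\cup B$, and $\{\tau_A<\tau_B\}$ and $\{\tau_B<\tau_A\}$ partition the event of hitting $A\cup B$, which happens a.s.) this yields $h_{A,B}(Rz) = 1 - h_{A,B}(z)$ for $z\in(A\cup B)^c$, and trivially on $A$ and $B$ as well.

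Finally I would compute directly. Write $\cZ_N(\eps) = \int_{\R^{(2N+1)^2}} \e^{-V_N/\eps}\6z = \int_{A} + \int_{B} + \int_{(A\cup B)^c}$. By the change of variables $z\mapsto Rz$, which preserves Lebesgue measure and $V_N$, $\int_A \e^{-V_N/\eps}\6z = \int_B \e^{-V_N/\eps}\6z$. For the middle region, using $h_{A,B}(Rz) = 1 - h_{A,B}(z)$ and the same change of variables,
\begin{equation*}
\int_{(A\cup B)^c} h_{A,B}(z)\e^{-V_N(z)/\eps}\6z
= \int_{(A\cup B)^c} \bigpar{1 - h_{A,B}(z)}\e^{-V_N(z)/\eps}\6z\;,
\end{equation*}
so each equals $\frac12\int_{(A\cup B)^c}\e^{-V_N/\eps}\6z$. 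Then
\begin{equation*}
\int_{B^c} h_{A,B}\,\e^{-V_N/\eps}\6z
= \int_A \e^{-V_N/\eps}\6z + \int_{(A\cup B)^c} h_{A,B}\,\e^{-V_N/\eps}\6z
= \int_A \e^{-V_N/\eps}\6z + \tfrac12\!\int_{(A\cup B)^c}\!\e^{-V_N/\eps}\6z\;,
\end{equation*}
using $h_{A,B}=1$ on $A$. On the other hand $\frac12\cZ_N(\eps) = \frac12\int_A + \frac12\int_B + \frac12\int_{(A\cup B)^c} = \int_A \e^{-V_N/\eps}\6z + \frac12\int_{(A\cup B)^c}\e^{-V_N/\eps}\6z$, which matches. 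The only point requiring a little care — the main obstacle, such as it is — is the invariance $V_N\circ R = V_N$, i.e. checking that the quartic convolution term is even under flipping the sign of the zero mode alone; this is immediate once one recognises $V_N(\phi) = V_N(-\phi)$ at the level of the functional \eqref{eq:V_phi} and then restricts to the Galerkin subspace, since the reflection of $\bar\phi$ corresponds to $\phi_\perp \mapsto \phi_\perp$ but that is not the full $\phi\mapsto-\phi$; so one actually argues monomial-by-monomial that a monomial $z_{k_1}z_{k_2}z_{k_3}z_{k_4}$ with $\sum k_i = 0$ has an even number of vanishing indices, hence is unchanged. The rest is bookkeeping.
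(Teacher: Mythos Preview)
Your overall strategy matches the paper's: establish $h_{A,B}\circ R = h_{B,A} = 1-h_{A,B}$ from the reflection symmetry and then do a change-of-variables computation. Your decomposition of $B^c$ into $A$ and $(A\cup B)^c$ is a cosmetic variant of the paper's split into $\{z_0<0\}$ and $\{z_0>0\}$; the mechanics are identical.

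There is, however, a genuine error in your verification of $V_N\circ R = V_N$. You assert that a monomial $z_{k_1}z_{k_2}z_{k_3}z_{k_4}$ with $k_1+\cdots+k_4=0$ must have an even number of vanishing indices. This is false: take $k_1=(0,0)$, $k_2=(1,0)$, $k_3=(0,1)$, $k_4=(-1,-1)$. In functional language, expanding $\int_{\T^2}(\bar\phi+\phi_\perp)^4\6x$ produces the cross term $4\bar\phi\int_{\T^2}\phi_\perp^3\6x$, and $\int_{\T^2}\phi_\perp^3\6x$ does not vanish in general (this is exactly the term $\frac{z_0}{L}\sqrt{\eps}\,U_{3,N}^\perp$ appearing later in~\eqref{eq:dec21}). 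So $V_N$ is \emph{not} invariant under flipping $z_0$ alone while keeping $z_\perp$ fixed.

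The repair is immediate: use instead the full reflection $R':z\mapsto -z$, i.e.\ $\phi\mapsto-\phi$. Then $V_N\circ R'=V_N$ is manifest from~\eqref{eq:V_phi}, and $R'$ still swaps $A$ and $B$ because $D_\perp$, being a ball centred at the origin in $H^s$, satisfies $-D_\perp=D_\perp$. Your committor identities and the rest of your computation go through verbatim with $R'$ in place of $R$. (The paper's own proof invokes the same $z_0$-only reflection $S$ and asserts $V_N(Sz)=V_N(z)$; the same correction applies there.)
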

\begin{proof}
Consider the reflection $S$ given by 
\[
 S(z_0,z_\perp) = (-z_0,z_\perp)\;.
\]
The potential $V_N$ satisfies the symmetry 
\[
 V_N(Sz) = V_N(z)
\]
which implies 
\begin{equation}
 \label{eq:VN_sym} 
 \int_{\set{z_0<0}} \e^{-V_N(z)/\eps} \6z =
\int_{\set{z_0>0}} \e^{-V_N(z)/\eps} \6z = 
\frac12 \cZ_N(\eps)\;. 
\end{equation} 
Assuming we choose $A$ and $B$ such that $B=SA$, the committor satisfies 
\[
 h_{A,B}(z) = h_{B,A}(Sz)\;.
\]
In addition, we always have 
\[
 h_{A,B}(z) = 1 - h_{B,A}(z)\;.
\]
Now observe that we have 
\begin{align*}
\int_{\R^{(2N+1)^2}} &h_{A,B}(z) \e^{-V_N(z)/\eps} \6z \\
&= \int_{\set{z_0<0}} h_{A,B}(z) \e^{-V_N(z)/\eps} \6z 
+ \int_{\set{z_0>0}} (1-h_{B,A}(z)) \e^{-V_N(z)/\eps} \6z \\
&= \int_{\set{z_0<0}} h_{A,B}(z) \e^{-V_N(z)/\eps} \6z 
+ \int_{\set{z_0>0}} \e^{-V_N(z)/\eps} \6z 
- \int_{\set{z_0<0}} h_{A,B}(z) \e^{-V_N(z)/\eps} \6z \\
&= \int_{\set{z_0>0}} \e^{-V_N(z)/\eps} \6z\;,
\end{align*}
and the conclusion follows from~\eqref{eq:VN_sym}. 
\end{proof}

As a consequence of~\eqref{eq:hAB_sym}, we can rewrite~\eqref{eq:expec_tau} in
the form
\begin{equation}
 \label{eq:expec_tau1}
 \bigexpecin{\mu_{A,B}}{\tau_B} 
 = \frac{1}{2\capacity_A(B)} \cZ_N(\eps)\;. 
\end{equation} 
Note that this relation can also be written as 
\begin{equation}
 \label{eq:expec_tau2}
 \frac{1}{\bigexpecin{\mu_{A,B}}{\tau_B}}
 = 2\bigexpecin{\pi_N(\eps)}{\eps \norm{\nabla h_{A,B}}^2}\;,
\end{equation} 
where $\pi_N(\eps)$ is the probability measure on $\R^{(2N+1)^2}$ with density
$\cZ_N(\eps)^{-1}\e^{-V_N(z)/\eps} \6z$. 

The result will follow if we are able to prove the estimate
\begin{equation}
 \label{eq:cap_estimate} 
  \capacity_A(B) = \sqrt{\frac{\abs{\lambda_0}\eps}{2\pi}}
 \prod_{0<\abs k\leqs N} \sqrt{\frac{2\pi\eps}{\lambda_k}}
 \,\bigbrak{1 + r(\eps)}
\end{equation} 
on the capacity with $-c_-\sqrt{\eps} \leqs r(\eps) \leqs c_+\eps$, as well as
an estimate on the partition function of the form 
\begin{equation}
 \label{eq:ZN_estimate}
 \frac12\cZ_N(\eps) 
  = \prod_{\abs k\leqs N} \sqrt{\frac{2\pi\eps}{\lambda_k+3}}
 \e^{L^2/4\eps} \e^{3 L^2 C_N/2} 
 \,\bigbrak{1 - r(\eps)}\;.
\end{equation} 
Here the exponent $L^2/4$ is the value of the original
potential~\eqref{eq:potential} at the stable stationary solution $\phi_-$,
while the term $3 L^2 C_N/2$ is due to the renormalisation, which makes the
potential well deeper and deeper as $N\to\infty$. Note that owing to the
expression~\eqref{eq:cN} of $C_N$, this is exactly the extra term transforming
the divergent Fredholm determinant into a converging Carleman--Fredholm
determinant.


\section{Lower bound on the expected transition time}
\label{sec_lb}

We will start by deriving a lower bound on the expected transition time,
i.e., we will prove the upper bound on the capacity \eqref{eq:cap_estimate}
(in Section~\ref{ssec_capacity_upper}) and the lower bound on the partition
function \eqref{eq:ZN_estimate}  (in Section~\ref{ssec_ZN_lower}). These bounds
are  somewhat simpler to obtain than the matching lower and upper bound in
Section~\ref{sec_ub}.

For the upper bound on $\capacity_A(B)$ we choose a specific test-function $h_+$
(defined in \eqref{eq:cap_ub01}) and plug it into the variational
characterisation \eqref{eq:cap3} of the capacity. The specific form of $h_+$ is
similar to the one used in \cite{BEGK}. It is given by (an approximation to) the
explicit minimiser of the one-dimensional version of \eqref{eq:cap3} in the
$z_0$ direction and chosen to be constant in all transversal directions. Then an
explicit calculation with this test function leads to an upper bound on the
capacity which consists of the desired pre-factor multiplied by the Gaussian
expectation of the exponential of  a negative fourth Wick power (see
equation~\eqref{eq:cap_ub04} below). Here the  renormalisation introduces a
difficulty, because the Wick power is not bounded from below uniformly in $N$.
In order to analyse this expression, we invoke techniques from constructive
Quantum Field Theory, more precisely, the Nelson argument  (see
\cite{Nelson73}). This is  implemented in two steps in
Proposition~\ref{prop:nelson1} and Proposition~\ref{prop:nelson2}
leading to the desired bound.

In order to derive the lower bound on the $\cZ_N(\eps)$ we use the symmetry of
the system to reduce to the integration over a half space. We then recentre the
field around the minimiser $1$ and perform the corresponding change of
coordinates in the expression for $V_N$. At this point we use the binomial
identity for Wick powers (see \eqref{A3-1}) as well as the transformation rules
for changing the \lq\lq mass\rq\rq\ in a Wick power (see \eqref{A5-1}),
resulting again in a Gaussian expectation of the exponential of a quartic Wick
polynomial. This time the Gaussian covariance changes with respect to
Section~\ref{ssec_capacity_upper} and the Wick polynomial is defined with
respect to this modified measure and also has a non-trivial cubic term. The
\lq\lq renormalisation\rq\rq\ factor  $\e^{3 L^2 C_N/2} $ in
\eqref{eq:ZN_estimate} appears at this point, because the change of Gaussian
measure is given by a quadratic Wick power which produces an extra term in the
Gaussian normalisation constant. Finally, the Gaussian expectation is treated
with a relatively simple argument using Jensen's inequality in
Proposition~\ref{prop:ZN_lb}.
%



\subsection{Upper bound on the capacity}
\label{ssec_capacity_upper}

One can obtain an upper bound on the capacity by inserting any test function in
the right-hand side of~\eqref{eq:cap2}. Let $\delta>0$ be a small constant and
define 
\begin{equation}
 \label{eq:cap_ub01} 
 h_+(z) = 
 \begin{cases}
 1 & \text{if $z_0 \leqs -\delta$\;,} \\
 \dfrac{\displaystyle \int_{z_0}^\delta
\e^{-\abs{\lambda_0}t^2/2\eps}\6t}{
\displaystyle \int_{-\delta}^\delta
\e^{-\abs{\lambda_0}t^2/2\eps}\6t}
 & \text{if $-\delta < z_0 < \delta$\;,} \\
 0 & \text{if $z_0 \geqs \delta$\;.}
 \end{cases}
\end{equation} 
Although $\abs{\lambda_0}=1$, we will keep $\lambda_0$ in the notation as it
allows to keep track of its influence on the result. Observe that 
\begin{equation}
\label{eq:cap_ub02}
\norm{\nabla h_+(z)}^2 = 
\begin{cases}
 \dfrac{\e^{-\abs{\lambda_0}z_0^2/\eps}}{ \displaystyle
 \biggpar{\int_{-\delta}^\delta 
 \e^{-\abs{\lambda_0}t^2/2\eps}\6t}^2} 
 & \text{if $-\delta < z_0 < \delta$\;,} \\
 0 & \text{otherwise\;.}
\end{cases}
\end{equation} 
Note that 
\begin{equation}
 \label{eq:cap_ub03} 
 \biggpar{\int_{-\delta}^\delta \e^{-\abs{\lambda_0}t^2/2\eps}\6t}^2 =
\frac{2\pi\eps}{\abs{\lambda_0}}
\,\bigbrak{1+\Order{\e^{-\delta^2/2\eps}}}\;,
\end{equation} 
where here and throughout the paper, the notation $f(\eps) =
\Order{g(\eps)}$ indicates that there exists $\eps_0>0$ such that $|f(\eps)|$ is
bounded by a constant times $|g(\eps)|$ uniformly in $\eps\in(0,\eps_0)$. Note
that here the parameter $\delta$ is small but fixed.
Inserting~\eqref{eq:cap_ub03} in~\eqref{eq:cap2} we get 
\begin{align}
\nonumber
 \capacity_A(B) 
 &\leqs \frac{\abs{\lambda_0}}{2\pi} 
 \int_{\R^{(2N+1)^2}} \e^{-[V_N(z)+\abs{\lambda_0}z_0^2]/\eps} \6z
 \,\bigbrak{1+\Order{\e^{-\delta^2/2\eps}}} \\
 &= \frac{\abs{\lambda_0}}{2\pi} \eps^{\frac12(2N+1)^2}
 \int_{\R^{(2N+1)^2}} \e^{-[V_N(\sqrt{\eps}y)/\eps+\abs{\lambda_0}y_0^2]} \6y
 \,\bigbrak{1+\Order{\e^{-\delta^2/2\eps}}}\;.
 \label{eq:cap_ub04} 
\end{align} 
Using the scaling $z=\sqrt{\eps}y$ in~\eqref{eq:V_N}, which amounts to
working with the scaled field
\begin{equation}
\label{eq:phi_N_hat} 
\phi_N=\sqrt{\eps}\hat\phi_N\;,
\end{equation} 
shows that the exponent can be written in the form  
\begin{equation}
 \label{eq:cap_ub05}
 \frac{1}{\eps} \bigbrak{V_N(\sqrt{\eps}y) + \eps\abs{\lambda_0}y_0^2}
 = g_N(y) + \eps w_N(y)\;,
\end{equation} 
where
\begin{align}
\nonumber
 g_N(y) &= \frac12 \abs{\lambda_0} y_0^2 
 + \frac12 \sum_{0<\abs{k}\leqs N}\lambda_k \abs{y_k}^2\;, \\
 w_N(y) &=  \frac14 \int_{\T^2} \bigl(\hat\phi_N^4(x) - 6 C_N \hat\phi_N^2(x) 
 + 3 C_N^2\bigr) \6x \;.
 \label{eq:cap_ub06}
\end{align} 
The quadratic form $g_N$ allows us to define a Gaussian probability measure 
$\6\gamma(y) = \cN^{-1}\e^{-g(y)}\6y$, with normalisation 
\begin{equation}
 \label{eq:cap_ub07}
 \cN =  \sqrt{\frac{2\pi}{\abs{\lambda_0}}} \prod_{k\neq0, \abs{k}\leqs N}
\sqrt{\frac{2\pi}{\lambda_k}}\;.
\end{equation} 
We can thus rewrite the upper bound~\eqref{eq:cap_ub04} in the form
\begin{equation}
\label{eq:cap_ub08}
\capacity_A(B) \leqs 
\sqrt{\frac{\abs{\lambda_0}\eps}{2\pi}} \prod_{k\neq0, \abs{k}\leqs N}
\sqrt{\frac{2\pi\eps}{\lambda_k}} 
\bigexpecin{\gamma}{\e^{-\eps w_N}}\,\bigbrak{1+\Order{\e^{-\delta^2/2\eps}}}\;.
\end{equation} 
The term $\expecin{\gamma}{\e^{-\eps w_N}}$ can be estimated using the Gaussian
calculus developed in Appendix~\ref{sec_Wick}. Indeed, the law of the field
$\hat\phi_N$ under $\gamma$ is exactly as described 
there. Furthermore, $C_N = \E^{\gamma} [\hat\phi_N(x)^2]$ for each $x \in \T^2$
so that the term $w_N(y)$, defined in \eqref{eq:cap_ub06}, can be rewritten as
the integral over the fourth Wick power of $\hat{\phi}$ with respect to this
Gaussian measure, that is, 

\begin{equation}
 \label{eq:cap_ub09}
  w_N  = \frac{1}{4} \int_{\T^2} H_4(\hat\phi_N,C_N) \6 x ,
\end{equation}
(where $H_4(X,C) = X^4 - 6 CX^2 +3C^2$, see \eqref{e:def-Hermite1} for the definition of 
the Hermite polynomial $H_n$).
In particular $w_N$ has zero mean under the Gaussian measure $\6\gamma$ and
according to \eqref{A13} all its stochastic moments are uniformly bounded in
$N$.

We now derive a uniform-in-$N$ bound on $\bigexpecin{\gamma}{\e^{- w_N}}$
following a classical argument 
due to Nelson (see e.g. \cite[Sec 8.6]{Glimm_Jaffe_81} or \cite[Sec.
4]{da2007wick}).

\begin{prop}
\label{prop:nelson1} 
There exists a constant $K$, independent of $N$, such that 
\begin{equation}
 \label{eq:nelson_prop_1}
 \bigexpecin{\gamma}{\e^{-w_N}} 
 \leqs K\;.
\end{equation}
\end{prop}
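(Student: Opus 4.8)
The plan is to run the classical Nelson argument (\cite{Nelson73}; see also \cite[Sec.~8.6]{Glimm_Jaffe_81} and \cite[Sec.~4]{da2007wick}) for the covariance at hand. By~\eqref{eq:cap_ub09}, $w_N=\frac14\int_{\T^2}H_4(\hat\phi_N(x),C_N)\6x$ with $H_4(X,C)=X^4-6CX^2+3C^2$, so $w_N$ lies in the fourth homogeneous Wiener chaos of $\gamma$ and has vanishing mean. By~\eqref{A13} its $L^2(\gamma)$-norm is bounded uniformly in $N$ --- ultimately because the covariance kernel $\sum_{\abs k\leqs N}\abs{\lambda_k}^{-1}e_k(x)\cc{e_k(y)}$ is bounded in $L^4(\T^2\times\T^2)$ uniformly in $N$ --- and Nelson's hypercontractive inequality then gives $\norm{w_N}_{L^p(\gamma)}\leqs(p-1)^2M$ for every $p\geqs2$, with $M:=\sup_N\norm{w_N}_{L^2(\gamma)}<\infty$. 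The point that makes the statement non-trivial is that this does not bound $\E^{\gamma}[\e^{-w_N}]$ for free: the formal Taylor expansion of the exponential in these moments diverges, reflecting the fact that, although $w_N$ is bounded below, its lower bound degenerates as $N\to\infty$.

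The first, elementary, ingredient is a deterministic lower bound: completing the square gives $H_4(X,C)=(X^2-3C)^2-6C^2\geqs-6C^2$, hence $w_N\geqs-\tfrac32L^2C_N^2=:-B_N$, with $B_N\asymp(\log N)^2$ by~\eqref{eq:cNlog}. By the layer-cake formula the claim then reduces to a tail estimate,
\[
 \E^{\gamma}\bigl[\e^{-w_N}\bigr]\;\leqs\;1+\int_{0}^{B_N}\e^{t}\,\fP^{\gamma}\set{w_N\leqs-t}\,\6t\;,
\]
so it suffices to bound $\fP^{\gamma}\set{w_N\leqs-t}$, $0\leqs t\leqs B_N$, well enough that the right-hand side stays bounded as $N$ (hence $B_N$) tends to infinity. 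The naive estimate fails: Markov's inequality together with the hypercontractive moment bound yields only $\fP^{\gamma}\set{w_N\leqs-t}\leqs\inf_{p\geqs2}\bigpar{(p-1)^2Mt^{-1}}^{p}\leqs C\e^{-c\sqrt{t}}$, under which the integral is of order $\e^{B_N}$ and diverges with $N$.

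The second, substantial, ingredient --- the heart of the proof, and where the companion Proposition~\ref{prop:nelson2} comes in --- is the genuine multiscale Nelson tail bound. One decomposes $\hat\phi_N=\sum_{j=0}^{J}\hat\phi_{(j)}$ into independent dyadic frequency shells (so $J\asymp\log N$), expands $\Wick{\hat\phi_N^4}$ by the binomial identity for Wick powers~\eqref{A3-1} into a sum of products of Wick monomials of the $\hat\phi_{(j)}$, and uses that within each single shell the logarithmic divergence in~\eqref{eq:cN} contributes only a bounded amount; the upshot is that a strongly negative value of $w_N$ forces $\hat\phi_N$ --- in particular a moderately low-frequency truncation of it --- to be atypically large on a macroscopic subset of $\T^2$, an event of probability decaying faster than any power of $N$, in fact fast enough that $\fP^{\gamma}\set{w_N\leqs-t}\leqs C\e^{-\psi(t)}$ holds uniformly in $N$ with a profile $\psi$ growing super-exponentially, whence $\int_{0}^{\infty}\e^{t}\e^{-\psi(t)}\,\6t<\infty$. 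Substituting into the display above gives $\E^{\gamma}[\e^{-w_N}]\leqs K$ with $K$ independent of $N$. I expect this step to be the main obstacle: one must make the scale-by-scale accounting quantitative --- controlling the cross terms between shells (Wick monomials of order at most four, again via hypercontractivity) and converting \lq\lq$w_N$ very negative\rq\rq\ into \lq\lq many shells simultaneously atypical\rq\rq\ --- while ensuring that the constants $c$, $C$ and the profile $\psi$ carry no hidden dependence on $N$; the uniform moment estimates of Appendix~\ref{sec_Wick} and the precise rate~\eqref{eq:cNlog} are used repeatedly. Finally, once Proposition~\ref{prop:nelson1} is established, the sharper bound $\E^{\gamma}[\e^{-\eps w_N}]=1+\Order{\eps}$ actually needed in~\eqref{eq:cap_ub08} follows by splitting according to whether $\abs{w_N}\leqs\eps^{-1/2}$ or not and combining a second-order Taylor expansion on the bulk with the bound just proved on the complement.
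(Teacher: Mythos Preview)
Your setup is right --- the deterministic lower bound $w_N\geqs-\tfrac32L^2C_N^2$ and the layer-cake reduction to a uniform tail estimate for $\fP^\gamma\{-w_N>t\}$ are exactly how the paper begins, and you correctly diagnose that the naive hypercontractive bound gives only $\e^{-c\sqrt t}$, which is not integrable against $\e^t$.

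Where you diverge from the paper is in the ``substantial ingredient''. You propose a full dyadic frequency decomposition $\hat\phi_N=\sum_j\hat\phi_{(j)}$ and a shell-by-shell accounting, but you do not actually carry it out, and you acknowledge this is the main obstacle. The paper's argument is considerably simpler and avoids any multiscale expansion: for each level $t$ one chooses a \emph{single} cutoff $M(t)<N$ satisfying $D_{M(t)}\leqs t-1$ (equivalently $\log M(t)\lesssim\sqrt t$, since $D_M\asymp(\log M)^2$), and uses the deterministic bound $w_{M(t)}\geqs-D_{M(t)}$ to get
\[
\bigprobin{\gamma}{-w_N>t}\;\leqs\;\bigprobin{\gamma}{\,\abs{w_{M(t)}-w_N}>1\,}\;.
\]
The point is that the \emph{difference} $w_{M(t)}-w_N$ has small variance --- of order $M(t)^{-(1-\eta)}$ by~\eqref{A5} --- so Markov plus hypercontractivity (Lemma~\ref{le:Nelson}) with $p(t)\sim t^\beta$, $\beta>\tfrac12$, gives a tail bound $\exp\{-c\,t^{\beta+1/2}\}$, which beats $\e^t$. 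This is the classical single-cutoff form of Nelson's trick; no dyadic bookkeeping or cross-term control is needed.

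Two smaller points. First, your claim that $\psi$ grows ``super-exponentially'' is too strong; what one actually obtains (and all one needs) is super-linear growth $\psi(t)\sim t^\alpha$ with $\alpha>1$. Second, Proposition~\ref{prop:nelson2} is not an ingredient in the proof of Proposition~\ref{prop:nelson1} --- it is a \emph{consequence} (as you in fact say correctly at the end), so the parenthetical ``where the companion Proposition~\ref{prop:nelson2} comes in'' is misplaced.
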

\begin{proof}
First note that the definition~\eqref{eq:A2} of Hermite polynomials implies
for any $M\in\N$
\[
H_4(\hat\phi_M,C_M) = (\hat\phi_M^2(x) - 3C_M)^2 - 6 C_M^2\;,  
\]
so that 
\[
 w_M \geqs -\frac32L^2 C_M^2 =: -D_M\;. 
\]
Since
$\expecin{\gamma}{\e^{-w_N}\indexfct{w_N \geqs 0}} \leqs
\probin{\gamma}{w_N \geqs 0}\leqs 1$, it is sufficient to bound 
\begin{align*}
\bigexpecin{\gamma}{\e^{-w_N}\indexfct{w_N < 0}} 
&= 1 + \int_0^\infty \e^t \bigprobin{\gamma}{-w_N > t}\6t \\
&\leqs \e + \int_1^\infty \e^t \bigprobin{\gamma}{-w_N > t}\6t \;.
\end{align*}
If $t \geqs D_N$, then $ \bigprobin{\gamma}{-w_N > t} =0$, otherwise
we have for any $M$
\begin{align*}
\bigprobin{\gamma}{-w_N > t} 
& \leqs \bigprobin{\gamma}{w_M - w_N > t - D_M} \\
& \leqs \bigprobin{\gamma}{\abs{w_M - w_N}^{p(t)} > \abs{t -
D_M}^{p(t)}} \;, 
\end{align*}
for any choice of $p(t)\in2\N$. We apply this inequality for $M=M(t)$
satisfying 
\begin{equation}\label{conD}
 t - D_{M(t)} \geqs 1\;,
\end{equation}
which implies that $M(t)<N$.

Then we get by Markov's inequality and  Lemma~\ref{le:Nelson} combined with
\eqref{A5} (cf.~\eqref{A14})
\begin{align*}
\bigprobin{\gamma}{-w_N > t} 
&\leqs \bigexpecin{\gamma}{\abs{w_{M(t)} -w_N}^{p(t)}} \\
&\leqs \const (p(t)-1)^{p(t)} \bigexpecin{\gamma}{\abs{w_{M(t)} -
w_N}^2}^{p(t)/2} \\
&\leqs \const \frac{(p(t)-1)^{p(t)}}{M(t)^{(1-\eta) p(t)/2}} \;
\end{align*}
for any $\eta>0$.
The condition \eqref{conD} on $M(t)$ imposes that $\log M(t)$ grows at most as
$t^{1/2}$. 
Choosing for instance $p(t) \sim t^\beta$ for $\beta>\frac12$, since 
\[
 \log \bigl( \e^t \bigprobin{\gamma}{-w_N > t} \bigr)
 \leqs t + \beta t^\beta \log t - c(1-\eta)t^{\beta+1/2}
\]
for a constant $c>0$ depending only on $L$, 
we obtain a convergent integral.
\end{proof}

This a priori estimate can now quite easily be turned into a sharper estimate.
Indeed, we have the following improvement. 

\begin{prop}
\label{prop:nelson2}
We have 
\begin{equation}
 \label{eq:nelson_prop_2}
 \bigexpecin{\gamma}{\e^{-\eps w_N}} 
 = 1 + \Order{\eps}\;,
\end{equation}
where the remainder is bounded uniformly in $N$. 
\end{prop}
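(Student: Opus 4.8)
The plan is to sandwich $\bigexpecin{\gamma}{\e^{-\eps w_N}}$ between $1$ and $1+\Order{\eps}$ using nothing beyond elementary convexity and the a priori bound of Proposition~\ref{prop:nelson1}, exploiting crucially that $w_N$ has mean zero under $\gamma$ (as recorded just after~\eqref{eq:cap_ub09}, since $w_N$ is the integral of a fourth Wick power). This reduces the whole statement, including the uniformity in $N$, to the already-established estimate $\bigexpecin{\gamma}{\e^{-w_N}}\leqs K$.

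For the lower bound I would apply Jensen's inequality to the convex function $x\mapsto\e^{-x}$:
\[
 \bigexpecin{\gamma}{\e^{-\eps w_N}} \geqs \e^{-\eps\bigexpecin{\gamma}{w_N}} = 1\;,
\]
where the last equality uses $\bigexpecin{\gamma}{w_N}=0$.

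For the upper bound I would use that, for each fixed $y$, the map $t\mapsto\e^{-t\,w_N(y)}$ is convex, so that for $\eps\in[0,1]$, writing $\eps = \eps\cdot 1 + (1-\eps)\cdot 0$,
\[
 \e^{-\eps w_N(y)} \leqs \eps\,\e^{-w_N(y)} + (1-\eps)\;.
\]
Integrating against $\gamma$ and invoking Proposition~\ref{prop:nelson1} gives, for $\eps\leqs1$,
\[
 \bigexpecin{\gamma}{\e^{-\eps w_N}} \leqs \eps\,\bigexpecin{\gamma}{\e^{-w_N}} + (1-\eps) \leqs 1 + (K-1)\eps\;,
\]
with $K$ the ($N$-independent) constant of that proposition; note $K\geqs1$ since $\bigexpecin{\gamma}{\e^{-w_N}}\geqs\e^{-\bigexpecin{\gamma}{w_N}}=1$ by Jensen. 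Combining the two bounds, the remainder $r_N(\eps):=\bigexpecin{\gamma}{\e^{-\eps w_N}}-1$ satisfies $0\leqs r_N(\eps)\leqs (K-1)\eps$ for all $\eps$ small enough, uniformly in $N$, which is the claim.

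I do not expect any genuine obstacle here: the substance of the argument has already been spent on the Nelson estimate of Proposition~\ref{prop:nelson1}, and the only conceptual point is to notice that the zero-mean property of $w_N$ makes Jensen and the convexity interpolation suffice. If one wanted the sharper remainder $\Order{\eps^2}$ anticipated in Remark~\ref{rem:epsilon}, one would instead Taylor-expand to second order, bound $\bigexpecin{\gamma}{(\eps w_N)^2\indexfct{w_N\geqs -1/\eps}}$ by the uniformly bounded second moment of $w_N$ (from~\eqref{A13}), and absorb the tail $\{w_N<-1/\eps\}$ using the super-exponential decay of $\bigprobin{\gamma}{-w_N>t}$ obtained inside the proof of Proposition~\ref{prop:nelson1}; but this refinement is not needed for the stated $\Order{\eps}$ bound.
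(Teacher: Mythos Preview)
Your proof is correct and is in fact more streamlined than the paper's own argument. Both rely on Proposition~\ref{prop:nelson1} as the only nontrivial input, but the paper proceeds by splitting into the events $\{w_N>0\}$ and $\{w_N\leqs 0\}$, using the elementary bounds $-\eps|w_N|\leqs\e^{-\eps w_N}-1\leqs 0$ on the first set and $0\leqs\e^{-\eps w_N}-1\leqs\eps|w_N|\e^{-\eps w_N}$ on the second, and then applies Cauchy--Schwarz together with the uniform second-moment bound on $w_N$ and the Nelson bound on $\bigexpecin{\gamma}{\e^{-2\eps w_N}}$. Your route avoids the case splitting and Cauchy--Schwarz entirely: Jensen gives the lower bound for free from $\bigexpecin{\gamma}{w_N}=0$, and the pointwise convexity interpolation $\e^{-\eps w_N}\leqs\eps\e^{-w_N}+(1-\eps)$ for $\eps\in[0,1]$ reduces the upper bound directly to $\bigexpecin{\gamma}{\e^{-w_N}}\leqs K$. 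The only (trivial) cost is that your constant is $K-1$ rather than something expressed through the second moment of $w_N$, but since the statement only asks for $\Order{\eps}$ this is immaterial. Your closing remark on how to upgrade to $\Order{\eps^2}$ is also on point.
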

\begin{proof}
Introduce the sets 
\[
 \Omega_+ = \bigsetsuch{\hat\phi_N}{w_N > 0}
\]
and $\Omega_-=\Omega_+^c$. Then we have 
\[
 \bigexpecin{\gamma}{\e^{-\eps w_N}\indicator{\Omega_+}}
 = \fP^{\gamma}(\Omega_+) 
 + \bigexpecin{\gamma}{(\e^{-\eps w_N}-1)\indicator{\Omega_+}}\;.
\]
Note that on $\Omega_+$, we have $-\eps \abs{w_N} \leqs \e^{-\eps
w_N}-1 \leqs 0$, so that 
\[
 \fP^{\gamma}(\Omega_+) - \eps
\bigexpecin{\gamma}{\abs{w_N}\indicator{\Omega_+}}
 \leqs \bigexpecin{\gamma}{\e^{-\eps w_N}\indicator{\Omega_+}}
 \leqs \fP^{\gamma}(\Omega_+)\;.
\]
Since $U_{4,N} $ has finite variance bounded uniformly in $N$, we know by
Cauchy--Schwarz
that $\bigexpecin{\gamma}{\abs{U_{4,N}}}$ is bounded uniformly in $N$.
Similarly, we have 
\[
 \bigexpecin{\gamma}{\e^{-\eps w_N}\indicator{\Omega_-}}
 = \fP^{\gamma}(\Omega_-) 
 + \bigexpecin{\gamma}{(\e^{-\eps w_N}-1)\indicator{\Omega_-}}\;.
\]
This time, we use that on $\Omega_-$, one has 
$0 \leqs \e^{-\eps w_N}-1 \leqs \eps \abs{w_N} \e^{-\eps
w_N}$. Thus by Cauchy--Schwarz, 
\begin{align*}
0 \leqs \bigexpecin{\gamma}{(\e^{-\eps w_N}-1)\indicator{\Omega_-}}
&\leqs 
\eps \bigexpecin{\gamma}{\abs{w_N} \e^{-\eps
w_N}\indicator{\Omega_-}} \\
&\leqs \eps \Bigpar{\bigexpecin{\gamma}{\e^{-2\eps
w_N}\indicator{\Omega_-}} \bigexpecin{\gamma}{
\abs{w_N}^2\indicator{\Omega_-}}}^{1/2}
\end{align*}
The term $\bigexpecin{\gamma}{\abs{w_N}^2}$ is bounded uniformly in $N$
as before, while the term $\bigexpecin{\gamma}{\e^{-2\eps
w_N}}$ is bounded uniformly in $N$ for $\eps\leqs2$ by
Proposition~\ref{prop:nelson1}. Summing the two estimates, we get the result.  
\end{proof}

Substituting this estimate in~\eqref{eq:cap_ub08}, we immediately get the
following upper bound on the capacity. 

\begin{cor}
\label{cor:cap_ub} 
There exists a constant $c_+$, uniform in $\eps$ and $N$, such that the capacity
satisfies the upper bound 
\begin{equation}
\label{eq:cap_ub20}
\capacity_A(B) \leqs 
\sqrt{\frac{\abs{\lambda_0}\eps}{2\pi}} \prod_{k\neq0, \abs{k}\leqs N}
\sqrt{\frac{2\pi\eps}{\lambda_k}} 
\,\bigbrak{1+c_+\eps}\;.
\end{equation}
\end{cor}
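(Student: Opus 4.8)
The plan is to combine the two ingredients already assembled: the variational upper bound \eqref{eq:cap_ub08}, obtained by inserting the explicit one-dimensional test function $h_+$ into the variational characterisation \eqref{eq:cap2} and performing the Gaussian rescaling $z=\sqrt{\eps}\,y$, and the estimate of Proposition~\ref{prop:nelson2} on the Gaussian expectation $\bigexpecin{\gamma}{\e^{-\eps w_N}}$.

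First I would recall \eqref{eq:cap_ub08}, namely
\[
\capacity_A(B) \leqs \sqrt{\frac{\abs{\lambda_0}\eps}{2\pi}} \prod_{k\neq0,\,\abs{k}\leqs N} \sqrt{\frac{2\pi\eps}{\lambda_k}} \,\bigexpecin{\gamma}{\e^{-\eps w_N}}\,\bigbrak{1+\Order{\e^{-\delta^2/2\eps}}}\;,
\]
in which the prefactor is already exactly the one claimed in \eqref{eq:cap_ub20}. By Proposition~\ref{prop:nelson2} we have $\bigexpecin{\gamma}{\e^{-\eps w_N}} = 1 + \Order{\eps}$, with a remainder uniform in $N$. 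Since $\delta>0$ is a fixed constant, the factor $1+\Order{\e^{-\delta^2/2\eps}}$ is itself of the form $1+\Order{\eps}$ (in fact far smaller), so the product of the two error factors is again $1+\Order{\eps}$, uniformly in $N$. This produces a constant $c_+$, uniform in both $\eps$ and $N$, for which the asserted bound holds.

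The only point worth checking carefully is that every error term entering the argument is uniform in the Galerkin truncation parameter $N$: this is trivially true for $\Order{\e^{-\delta^2/2\eps}}$, which does not depend on $N$, and — crucially — for the remainder in Proposition~\ref{prop:nelson2}, whose uniformity in $N$ is precisely what the Nelson-type a priori estimate of Proposition~\ref{prop:nelson1} was set up to guarantee. Beyond that there is essentially no obstacle: all the analytic work, namely controlling the exponential moment of the negative Wick power $-w_N$ uniformly in $N$, has already been carried out in Propositions~\ref{prop:nelson1} and~\ref{prop:nelson2}, and what remains is a one-line substitution together with the elementary fact that a product of finitely many factors of the form $1+\Order{\eps}$ is again $1+\Order{\eps}$.
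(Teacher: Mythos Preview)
Your proposal is correct and follows exactly the paper's approach: the paper simply states that substituting the estimate of Proposition~\ref{prop:nelson2} into~\eqref{eq:cap_ub08} immediately yields the corollary. Your write-up is in fact more detailed than the paper's one-line justification, but the argument is the same.
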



\subsection{Lower bound on the partition function}
\label{ssec_ZN_lower}

By symmetry, cf.~\eqref{eq:VN_sym}, the partition function can be computed
using the relation 
\begin{equation}
 \label{eq:ZN_lb01}
 \frac12 \cZ_N(\eps) = \int_{\Omega'_+} \e^{-V_N(z)/\eps}\6z\;, 
 \qquad
 \Omega'_+ = \set{z_0 > 0}\;.
\end{equation} 
A lower bound on $\cZ_N(\eps)$ can be obtained quite directly from Jensen's
inequality. It will be convenient to shift coordinates to the positive
stable stationary solution of the deterministic equation (without the
normalisation). That is, we set 
\begin{equation}
 \label{eq:ZN_lb02}
 \phi_N(x) = 1 + \sqrt{\eps} \hat\phi_{N,+}(x)\;,
\end{equation} 
with the Fourier decomposition 
\begin{equation}
 \label{eq:ZN_lb03}
 \hat\phi_{N,+}(x) = \sum_{\abs{k}\leqs N} y_k e_k(x)\;.
\end{equation} 
Let $\hat\Omega'_+ = \set{y_0 > -1/\sqrt{\eps}\,}$ denote the image of
$\Omega'_+$ under this transformation.
Substituting in~\eqref{eq:V_phi} and using the relation  \eqref{A3-1} yields the
following expression
for the potential:
\begin{align}
\nonumber
 V_N^+(y) :={}& \frac{1}{\eps} V_N[1 + \sqrt{\eps} \hat\phi_{N,+}(x)] \\
\nonumber
=& -\frac{L^2}{4\eps} + \frac12 \int_{\T^2} \Bigl(\norm{\nabla
\hat\phi_{N,+}(x)}^2 -
\hat\phi_{N,+}^2(x)  + 3 H_2(\hat\phi_{N,+}, C_N)\Bigr) \6 x \\
\label{eq:ZN_lb04}
&+ \frac{1}{4} \int_{\T^2} \Bigl(4  \sqrt{\eps} H_3(\hat\phi_{N,+}(x), C_N) +
\eps H_4 (\hat\phi_{N,+}(x), C_N)\Bigr) \6 x.
\end{align} 
Now the relevant Gaussian measure $\gamma_+$ is defined by the quadratic form 
\begin{align}
\notag
 g_{N,+}(y) &=   \frac12 \int_{\T^2} \Bigl( \norm{\nabla
\hat\phi_{N,+}(x)}^2 -
\hat\phi_{N,+}^2(x) 
+ 3 \hat\phi_{N,+}^2(x) \Bigr) \6 x\\
 \label{eq:ZN_lb05}
 &= \frac{1}{2} \sum_{0<\abs{k}\leqs N}(\lambda_k+3) \abs{y_k}^2\;.
\end{align} 
Observe that a term $-\frac32 C_N L^2$ appears owing to the Hermite
polynomial $ 3 H_2(\hat\phi_{N,+}, C_N)$. It is 
precisely this term which is ultimately responsible for the renormalisation of
the pre-factor. 
To bound expectations of the terms appearing in the last line of
\eqref{eq:ZN_lb04} it is convenient to rewrite 
them as Wick powers with respect to the Gaussian measure  defined by $g_{N,+}$.
The associated renormalisation constant is 
\begin{equation}
 \label{eq:ZN_lb06}
 C_{N,+} = \frac{1}{L^2} \sum_{0<\abs{k}\leqs N} \frac{1}{\lambda_k+3}\;.
\end{equation}

Observe in particular that 
\begin{equation}
 \label{eq:ZN_lb07}
 C_N - C_{N,+} = \frac{1}{L^2} \sum_{0<\abs{k}\leqs N}
\frac{3}{\abs{\lambda_k}(\lambda_k+3)}
\end{equation} 
is bounded uniformly in $N$. Using the relation \eqref{A5-1} that allows to
transform Hermite polynomials with respect to 
different constants we get
\begin{align}
\notag
  \sqrt{\eps} H_3(\hat\phi_{N,+}, C_N) 
  ={}&   \sqrt{\eps} H_3(\hat\phi_{N,+},
C_{N,+}) -
3\sqrt{\eps}(C_N-C_{N,+}) \hat\phi_{N,+} \\
  \notag
   \frac{\eps}{4} H_4 (\hat\phi_{N,+}, C_N) 
   ={}& \frac{ \eps}{4} H_4
(\hat\phi_{N,+},
C_{N,+}) - \frac{3}{2}\eps  (C_N-C_{N,+}) H_2 (\hat\phi_{N,+}, C_N) \\
   \label{e:ZN_LB1}
   &{}+ \frac{3}{4} \eps  (C_N-C_{N,+})^2\;.
\end{align}
Now we define the random variables 
\begin{equation}
 \label{eq:ZN_lb08}
 U_{n,N}^+ = \int_{\T^2} \Wick{\hat\phi_{N,+}^n(x)} \6x = \int_{\T^2}
H_n(\hat\phi_{N,+}(x), C_{N,+}) \6 x
\end{equation} 
which have zero mean under $\gamma_+$ as well as a variance bounded uniformly in
$N$.
Substituting~\eqref{e:ZN_LB1} in~\eqref{eq:ZN_lb04}, we get 
\begin{equation}
 \label{eq:ZN_lb09}
 V_N^+(y) = q + g_{N,+}(y) + w_{N,+}(y)\;,
\end{equation} 
where 
\begin{align}
q &= - \frac{L^2}{4 \eps} - \frac{3}{2}L^2C_N+  \frac34 L^2\eps(C_N - C_{N,+})^2
\nonumber\\
 w_{N,+}(y)
 &= \sqrt{\eps}\, U_{3,N}^+   + \frac14 \eps U_{4,N}^+ - 3 (C_N- C_{N,+} )
\big(\frac{\eps}{2} U_{2,N}^+ +\sqrt{\eps} U_{1,N}^+ \big) .
 \label{eq:ZN_lb10}
\end{align} 
It follows by a similar argument as in the previous section that 
\begin{equation}
 \label{eq:ZN_lb11}
 \frac12 \cZ_N(\eps) = \prod_{\abs{k}\leqs N}
\sqrt{\frac{2\pi\eps}{\lambda_k+3}} \e^{-q}
\bigexpecin{\gamma_+}{\e^{-w_{N,+}}\indicator{\hat\Omega'_+}}\;.
\end{equation} 

\begin{prop}
\label{prop:ZN_lb}
There exists a constant $c_-$, independent of $N$ and $\eps$, such that 
\begin{equation}
 \label{eq:ZN_lb12}
 \bigexpecin{\gamma_+}{\e^{-w_{N,+}}\indicator{\hat\Omega'_+}}
 \geqs 1 - \e^{-c_-/\eps}\;.
\end{equation} 
\end{prop}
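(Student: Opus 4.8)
The plan is to obtain the lower bound on $\bigexpecin{\gamma_+}{\e^{-w_{N,+}}\indicator{\hat\Omega'_+}}$ by combining a Jensen-type argument for the Gaussian expectation with a crude large-deviation estimate to handle the restriction to $\hat\Omega'_+$. First I would split
\[
 \bigexpecin{\gamma_+}{\e^{-w_{N,+}}\indicator{\hat\Omega'_+}}
 = \bigexpecin{\gamma_+}{\e^{-w_{N,+}}}
 - \bigexpecin{\gamma_+}{\e^{-w_{N,+}}\indicator{(\hat\Omega'_+)^c}}\;,
\]
and treat the two terms separately. For the full Gaussian expectation, Jensen's inequality gives
\[
 \bigexpecin{\gamma_+}{\e^{-w_{N,+}}} \geqs \e^{-\bigexpecin{\gamma_+}{w_{N,+}}}\;,
\]
and since each $U_{n,N}^+$ defined in~\eqref{eq:ZN_lb08} has zero mean under $\gamma_+$, the whole of $w_{N,+}$ in~\eqref{eq:ZN_lb10} has zero mean; hence $\bigexpecin{\gamma_+}{\e^{-w_{N,+}}} \geqs 1$. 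So the full expectation already exceeds the target $1$, and everything reduces to showing that the correction term coming from $(\hat\Omega'_+)^c = \set{y_0 \leqs -1/\sqrt\eps}$ is exponentially small.

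For that correction term I would use Cauchy--Schwarz:
\[
 \bigexpecin{\gamma_+}{\e^{-w_{N,+}}\indicator{(\hat\Omega'_+)^c}}
 \leqs \Bigpar{\bigexpecin{\gamma_+}{\e^{-2w_{N,+}}}}^{1/2}
 \, \fPin{\gamma_+}\bigpar{(\hat\Omega'_+)^c}^{1/2}\;.
\]
The first factor is bounded uniformly in $N$ by a Nelson-type argument identical to Proposition~\ref{prop:nelson1} applied to $2w_{N,+}$ (the extra cubic and linear terms in $w_{N,+}$, with coefficients involving the uniformly bounded quantity $C_N - C_{N,+}$ from~\eqref{eq:ZN_lb07}, do not change the structure of the estimate; one still has a lower bound of the form $w_{N,+} \geqs -D_N'$ with $D_N' = \Order{C_N^2}$, and the same pairing of Wick powers at an intermediate cutoff $M(t)$ works). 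The second factor is elementary: under $\gamma_+$ the coordinate $y_0$ is a centred Gaussian of variance $1/(\lambda_0+3) = 1/2$, independent of $\eps$ and of $N$, so
\[
 \fPin{\gamma_+}\bigpar{y_0 \leqs -1/\sqrt\eps}
 = \fPin{\gamma_+}\bigpar{y_0 \geqs 1/\sqrt\eps}
 \leqs \e^{-1/2\eps}\;,
\]
using the standard Gaussian tail bound. Putting these together yields $\bigexpecin{\gamma_+}{\e^{-w_{N,+}}\indicator{(\hat\Omega'_+)^c}} \leqs \e^{-c_-/\eps}$ for a suitable $c_- > 0$, hence the claim.

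The main obstacle I anticipate is making the Nelson estimate for $\bigexpecin{\gamma_+}{\e^{-2w_{N,+}}}$ fully rigorous in the presence of the lower-order terms: one must check that the mixed products such as $\bigexpecin{\gamma_+}{\abs{U_{3,M(t)}^+ - U_{3,N}^+}^{p}}$ and the linear/quadratic analogues obey the same $(p-1)^p M(t)^{-(1-\eta)p/2}$-type bounds as in Proposition~\ref{prop:nelson1}, which requires the Wick-power moment estimates~\eqref{A13}--\eqref{A14} applied to $U_{n,N}^+$ for $n=1,2,3,4$ together with the hypercontractivity bound of Lemma~\ref{le:Nelson}. This is routine but requires some bookkeeping; alternatively, one can absorb the lower-order terms by a preliminary Cauchy--Schwarz splitting of $\e^{-2w_{N,+}}$ into the quartic part and an exponential of the lower-order part, the latter being controlled by an ordinary Gaussian moment-generating-function bound since $U_{1,N}^+$, $U_{2,N}^+$ and $U_{3,N}^+$ lie in fixed Wiener chaoses. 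Everything else is a direct combination of Jensen's inequality, Cauchy--Schwarz, and a one-dimensional Gaussian tail estimate.
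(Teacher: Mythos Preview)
Your decomposition fails for a structural reason: the subtracted term $\bigexpecin{\gamma_+}{\e^{-w_{N,+}}\indicator{(\hat\Omega'_+)^c}}$ is \emph{not} exponentially small in $\eps$. By construction,
\[
 \bigexpecin{\gamma_+}{\e^{-w_{N,+}}\indicator{(\hat\Omega'_+)^c}}
 = \frac{\e^{q}}{\cN_+}\int_{(\hat\Omega'_+)^c} \e^{-V_N^+(y)}\6y
 = \frac{\e^{q}}{\cN_+ \eps^{(2N+1)^2/2}}\int_{\{z_0<0\}} \e^{-V_N(z)/\eps}\6z\;,
\]
and by the symmetry~\eqref{eq:VN_sym} this equals the companion integral over $\hat\Omega'_+$. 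So the two halves of your splitting are \emph{equal}; each is of order~$1$, and no bound of the form $\e^{-c_-/\eps}$ on the complement term is possible. Intuitively, $(\hat\Omega'_+)^c$ contains the second potential well at $\phi=-1$ (i.e.\ $y_0\approx -2L/\sqrt\eps$), where $\e^{-w_{N,+}}$ is exponentially large and exactly compensates the Gaussian tail of~$\gamma_+$. Your Cauchy--Schwarz step therefore cannot close: $\bigexpecin{\gamma_+}{\e^{-2w_{N,+}}}$ is genuinely of order $\e^{C/\eps}$ for a constant $C$ too large to be beaten by $\fPin{\gamma_+}\bigpar{(\hat\Omega'_+)^c}^{1/2}$. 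Relatedly, your claimed deterministic lower bound $w_{N,+}\geqs -D_N'$ with $D_N'=\Order{C_N^2}$ is wrong: the cubic term $\sqrt\eps\,U_{3,N}^+$ is only controlled by the quartic $\frac\eps4 U_{4,N}^+$, and a completion-of-squares in the field shows the best pointwise bound is of order $-1/\eps$, not $-C_N^2$.

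The paper avoids this trap by applying Jensen's inequality \emph{conditionally} on $\hat\Omega'_+$: one writes $\bigexpecin{\gamma_+}{\e^{-w_{N,+}}\indicator{\hat\Omega'_+}} = \fPin{\gamma_+}(\hat\Omega'_+)\,\bigecondin{\gamma_+}{\e^{-w_{N,+}}}{\hat\Omega'_+}$, bounds the conditional expectation below via Jensen by $\exp\{-\expecin{\gamma_+}{w_{N,+}\indicator{\hat\Omega'_+}}/\fPin{\gamma_+}(\hat\Omega'_+)\}$, and then uses that $\expecin{\gamma_+}{w_{N,+}}=0$ together with Cauchy--Schwarz on the \emph{second moment} of $w_{N,+}$ (not on its exponential) to show $\bigabs{\expecin{\gamma_+}{w_{N,+}\indicator{(\hat\Omega'_+)^c}}}\leqs K\e^{-c_0/2\eps}$. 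This sidesteps any Nelson-type estimate entirely.
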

\begin{proof}
Recall that
$ w_{N,+}$ has zero expectation  under $\gamma_{+}$.
Jensen's inequality yields
\begin{align*}
 \bigexpecin{\gamma_+}{\e^{-w_{N,+}}\indicator{\hat\Omega'_+}} 
 &= \fPin{\gamma_+}(\hat\Omega'_+) 
 \bigecondin{\gamma_+}{\e^{-w_{N,+}}}{\hat\Omega'_+} \\ 
 &\geqs \fPin{\gamma_+}(\hat\Omega'_+) 
 \e^{-\econdin{\gamma_+}{w_{N,+}}{\hat\Omega'_+}} \\
 &= \fPin{\gamma_+}(\hat\Omega'_+) 
 \e^{-\expecin{\gamma_+}{w_{N,+}\indicator{\hat\Omega'_+}}/\fPin{\gamma_+}
(\hat\Omega'_+)}\;.
\end{align*}
Note that the event $\hat\Omega'_+$ simply says that the first marginal of the
Gaussian distribution $\gamma_+$ is larger than a constant. Since this marginal
is a one-dimensional Gaussian distribution, centred at the positive stationary
solution, standard tail estimates show that there is a constant $c_0>0$ such
that 
\[
 \fPin{\gamma_+}(\hat\Omega'_+) \geqs 1 - \e^{-c_0/\eps}\;.
\]
Furthermore, there is a constant $K$ such that uniformly in $N$ and for
$n=1,2,3,4$
\begin{align*}
 \bigabs{\bigexpecin{\gamma_+}{U_{n,N}^+\indicator{\hat\Omega'_+}}}
 &= \bigabs{\bigexpecin{\gamma_+}{U_{n,N}^+\indicator{(\hat\Omega'_+)^c}}} 
 \leqs \bigexpecin{\gamma_+}{\abs{U_{n,N}^+}\indicator{(\hat\Omega'_+)^c}}
\\
 & \leqs \bigexpecin{\gamma_+}{(U_{n,N}^+)^2}^{1/2}
 \fPin{\gamma_+}((\hat\Omega'_+)^c)^{1/2}
 \leqs K\e^{-c_0/2\eps}\;.
\end{align*}
It thus follows that there exists a constant $c_1>0$ such that
\[
 \bigexpecin{\gamma_+}{w_{N,+}\indicator{\hat\Omega'_+}}
 \geqs - c_1 \e^{-c_0/2\eps}\;,
\]
which yields the required estimate 
$\expecin{\gamma_+}{\e^{-w_{N,+}}\indicator{\hat\Omega'_+}} \geqs 1 -
\e^{-c_-/\eps}$.
\end{proof}

Combining this result with~\eqref{eq:ZN_lb11} and Corollary~\ref{cor:cap_ub},
we finally obtain the following lower bound on the expected transition times.

\begin{prop}
\label{prop:transition_time_lb}
There exists a constant $C_-$, uniform in $N$ and $\eps$, such that 
\begin{equation}
\label{eq:transition_time_lb}
 \bigexpecin{\mu_{A,B}}{\tau_B}
 \geqs 2\pi\biggpar{ \frac{\e^{3/\abs{\lambda_0}}}{\abs{\lambda_0}(\lambda_0+3)}
 \prod_{0 < \abs{k} \leqs N}
 \biggbrak{\frac{\e^{3/\lambda_k}}{1+3/\lambda_k}}}^{1/2}
 \e^{L^2/4\eps}
 \bigbrak{1 - C_-\eps}
\end{equation} 
holds for all $N\geqs1$.
\end{prop}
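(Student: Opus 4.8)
The plan is to combine three ingredients that are now in place: the exact identity~\eqref{eq:expec_tau1}, which reads $\bigexpecin{\mu_{A,B}}{\tau_B} = \cZ_N(\eps)/(2\capacity_A(B))$; the upper bound~\eqref{eq:cap_ub20} on the capacity from Corollary~\ref{cor:cap_ub}; and the representation~\eqref{eq:ZN_lb11} of $\frac12\cZ_N(\eps)$ together with the lower bound of Proposition~\ref{prop:ZN_lb}. Since we want a \emph{lower} bound on $\bigexpecin{\mu_{A,B}}{\tau_B}$, I would use the capacity bound in the denominator and the partition-function bound in the numerator; everything after that is purely algebraic bookkeeping of the Gaussian normalisation constants and of the error terms.

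Concretely, I would first insert the value of $q$ from~\eqref{eq:ZN_lb10} into~\eqref{eq:ZN_lb11} and apply Proposition~\ref{prop:ZN_lb} to get
\[
 \frac12\cZ_N(\eps) \geqs \prod_{\abs k\leqs N}\sqrt{\frac{2\pi\eps}{\lambda_k+3}}\;
 \e^{L^2/4\eps}\,\e^{3L^2C_N/2}\,\e^{-\frac34 L^2\eps(C_N-C_{N,+})^2}\bigbrak{1-\e^{-c_-/\eps}}\;.
\]
By~\eqref{eq:ZN_lb07} the quantity $C_N-C_{N,+}$ is bounded uniformly in $N$, so $\e^{-\frac34 L^2\eps(C_N-C_{N,+})^2} = 1 + \Order{\eps}$ uniformly in $N$, and $1-\e^{-c_-/\eps}$ differs from $1$ only by an exponentially small amount. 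Dividing by the capacity bound~\eqref{eq:cap_ub20}, the ratio of the Gaussian prefactors collapses: isolating the $k=0$ term of the numerator and using that $\lambda_k>0$ for $k\neq0$ (which holds because $L<2\pi$),
\[
 \frac{\prod_{\abs k\leqs N}\sqrt{2\pi\eps/(\lambda_k+3)}}{\sqrt{\abs{\lambda_0}\eps/2\pi}\,\prod_{0<\abs k\leqs N}\sqrt{2\pi\eps/\lambda_k}}
 = \frac{2\pi}{\sqrt{\abs{\lambda_0}(\lambda_0+3)}}\prod_{0<\abs k\leqs N}\frac{1}{\sqrt{1+3/\lambda_k}}\;.
\]

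Next I would absorb the renormalisation factor $\e^{3L^2C_N/2}$ into the product. By the definition~\eqref{eq:cN} of $C_N$ we have $L^2C_N = \abs{\lambda_0}^{-1} + \sum_{0<\abs k\leqs N}\lambda_k^{-1}$, hence $\e^{3L^2C_N/2} = \e^{3/2\abs{\lambda_0}}\prod_{0<\abs k\leqs N}\e^{3/2\lambda_k}$; multiplying this into the previous display reproduces exactly
\[
 2\pi\biggpar{\frac{\e^{3/\abs{\lambda_0}}}{\abs{\lambda_0}(\lambda_0+3)}\prod_{0<\abs k\leqs N}\frac{\e^{3/\lambda_k}}{1+3/\lambda_k}}^{1/2}\;.
\]
Together with the factor $\e^{L^2/4\eps}$ and the remaining error contributions $(1+\Order{\eps})(1-\e^{-c_-/\eps})/(1+c_+\eps) = 1 - \Order{\eps}$, this is precisely the claimed bound~\eqref{eq:transition_time_lb} with a constant $C_-$ uniform in $N$ and $\eps$.

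There is no genuinely hard step; the argument is a verification. The one point requiring care is checking that the divergent renormalisation factor $\e^{3L^2C_N/2}$, which arises in Section~\ref{ssec_ZN_lower} from the change of Gaussian measure, supplies exactly one regularising factor $\e^{3/\lambda_k}$ per Fourier mode, so that the otherwise divergent product $\prod_{0<\abs k\leqs N}(1+3/\lambda_k)^{-1/2}$ becomes the convergent Carleman--Fredholm expression discussed around~\eqref{eq:det2}; this is what makes the right-hand side of~\eqref{eq:transition_time_lb} have a finite limit as $N\to\infty$. One should also keep track that every error term used --- from Corollary~\ref{cor:cap_ub}, from Proposition~\ref{prop:ZN_lb}, and from the $\e^{-\frac34 L^2\eps(C_N-C_{N,+})^2}$ factor --- is uniform in $N$, which it is by construction.
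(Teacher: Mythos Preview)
Your proposal is correct and follows essentially the same route as the paper: combine~\eqref{eq:expec_tau1} with the capacity upper bound~\eqref{eq:cap_ub20} and the partition-function lower bound obtained from~\eqref{eq:ZN_lb11} and Proposition~\ref{prop:ZN_lb}, then absorb $\e^{3L^2C_N/2}$ mode by mode via~\eqref{eq:cN}. You spell out more explicitly than the paper does that the extra factor $\e^{-\frac34 L^2\eps(C_N-C_{N,+})^2}$ is $1+\Order{\eps}$ uniformly in $N$ thanks to~\eqref{eq:ZN_lb07}, and that the exponentially small term from Proposition~\ref{prop:ZN_lb} is harmless, but the argument is the same.
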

\begin{proof}
Plugging~\eqref{eq:ZN_lb12} into~\eqref{eq:ZN_lb11}, using the
upper bound~\eqref{eq:cap_ub20} on the capacity and substituting
in~\eqref{eq:expec_tau1}, we obtain 
\[
 \bigexpecin{\mu_{A,B}}{\tau_B} \geqs 
 \sqrt{\frac{2\pi}{\abs{\lambda_0}\eps}} 
 \sqrt{\frac{2\pi\eps}{\lambda_0+3}}
 \prod_{0<\abs{k}\leqs N} \sqrt{\frac{\lambda_k}{\lambda_k+3}}
 \e^{L^2/4\eps} \e^{3L^2C_N/2} \bigbrak{1+\Order{\eps}}\;.
\]
Using the fact that 
\[
 \frac32 L^2 C_N = \frac32 \biggpar{\frac{1}{\abs{\lambda_0}} +
\sum_{0<\abs{k}\leqs N} \frac{1}{\lambda_k}} 
\]
yields the result.
\end{proof}


\section{Upper bound on the expected transition time}
\label{sec_ub}

In this section we derive the upper bound on the expected transition time,
i.e., we will prove the upper bound on the
partition function  \eqref{eq:ZN_estimate} (in Section~\ref{ssec_ZN_upper}) and
the lower bound on the capacity \eqref{eq:cap_estimate}  (in
Section~\ref{ssec_capacity_lower}).

Inspired by~\cite{berglund2007metastability}, we decompose the field $\phi$ into
its mean (given by the zeroth Fourier coefficient $z_0$) and its (rescaled)
transversal fluctuations, see \eqref{eq:dec02}. In fact, in Proposition~3.2
in~\cite{berglund2007metastability}  it was observed that in a similar system
the potential $V_N$ could be bounded from below by a function which only depends
on the mean and a uniformly convex function in the transversal direction. In
Lemma~\ref{lem:wo3} we obtain a similar bound in our setting (but we only state
it as a $z_0$-dependent lower bound, which is all we need). The point here is
that although this lower bound diverges logarithmically as $N \to \infty$, it
does not become worse as $\eps \to 0$ and thus permits to mimic the Nelson
argument in Proposition~\ref{prop:ZN_ub01} to obtain a bound on the integral
over the transversal directions in the partition function, which does not depend
too badly on $z_0$ and $\eps$. Once this a priori bound is established we
rewrite this transversal integral once more, this time with respect to the
Gaussian reference measure $g_{N,\perp}(z_0,y_\perp)$ (in the terminology of
Quantum Field Theory this amounts to a $z_0$-dependent  change of \lq\lq
mass\rq\rq) to obtain  a sharp upper bound on the integral over the transversal
directions (in Proposition~\ref{prop:ZN_ub02}).

The argument for the lower bound on the capacity is similar to \cite{BEGK}.
Using the characterisation \eqref{eq:cap3} of $\capacity_A(B)$ the lower bound
can obtained by solving a one-dimensional variational problem in the $z_0$
direction (see \eqref{eq:cap_lb03} below). It is here where the assumption that
the sets $A$ and $B$ are not too small enters.

Finally, in Section~\ref{ssec_laplace} it remains to treat the integral in the
$z_0$ direction. Combining the bounds of the previous two sections
one obtains an upper bound on the ratio $ \frac{\cZ_N(\eps)}{2\capacity_A(B)}$
in terms of  an integral in $z_0$ over a function which depends on $z_0$  but
not on $N$. This then permits to apply standard one-dimensional Laplace
asymptotic to conclude the argument.%


\subsection{Longitudinal-transversal decomposition of the potential}
\label{ssec_decomp}

We denote the rescaled fluctuating part of the Fourier
expansion~\eqref{eq:Fourier} by
\begin{equation}
 \label{eq:dec02}
 \hat\phi_{N,\perp}(x) 
 = \hat\phi_N(x) - \frac{z_0}{\sqrt{\eps}L}
 = \sum_{0<\abs{k}\leqs N} y_k e_k(x)\;.
\end{equation} 
Note in particular the Parseval identity  
\begin{equation}
 \label{eq:dec04} 
  \int_{\T^2}\hat\phi_{N,\perp}^2(x)\6x 
 = \sum_{0 < \abs{k} \leqs N} \abs{y_k}^2\;.
\end{equation} 
Similarly to \eqref{eq:ZN_lb04} the potential can  be written in the form 
\begin{align}
\notag
 \frac1\eps V_N(z_0,y_\perp) 
={}& \frac1\eps q(z_0) +g_{N,\perp}(z_0,y_\perp)  
+ \frac{1}{4} \int_{\T^2} \frac{6 z_0^2}{L^2} H_2(\hat\phi_{N,\perp}(x), C_N) \6
x \\
 \label{eq:dec05}
&   +  \frac{1}{4} \int_{\T^2} \Bigl(\frac{4z_0}{L}  \sqrt{\eps}
H_3(\hat\phi_{N,\perp}(x), C_N) + \eps H_4
(\hat\phi_{N,\perp}(x), C_N) \Bigr) \6 x\;,
\end{align} 
where this time
\begin{align}
\nonumber
q(z_0)
&= \frac{1}{4L^2}z_0^4 - \frac12 \abs{\lambda_0} z_0^2\;, \\
g_{N,\perp}(z_0,y_\perp) &= \frac12 \sum_{0 < \abs{k} \leqs N} \lambda_k
 \abs{y_k}^2\;.
\label{eq:dec06}
\end{align} 
Here we have used the fact that by assumption $\int_{\T^2} \hat\phi_{N,\perp}(x)
\6 x =0$, so that the corresponding term drops.
The quadratic form $g_{N,\perp}$ defines a Gaussian
measure $\gamma_0^\perp$ with normalisation
\begin{equation}
 \label{eq:dec07}
  \cN^\perp_0 = 
 \prod_{0<\abs{k}\leqs N} \sqrt{\frac{2\pi}{\lambda_k}}\;.
\end{equation} 
The associated renormalisation constant is given by 
\begin{equation}
 \label{eq:dec08}
  \bigexpecin{\gamma^\perp_0}{\hat\phi_{N,\perp}^2(x)}
 = \frac{1}{L^2} \sum_{0 < \abs{k} \leqs N} 
 \frac{1}{\lambda_k}
 =: C^\perp_N = C_N - \frac{1}{L^2}\;.
\end{equation}
As before, we are interested in the Wick powers $H_n(\hat\phi_{N,\perp},
C_N^{\perp})$ with respect to this measure and set \begin{equation}
 \label{eq:dec10} 
 U^\perp_{n,N} = \int_{\T^2} H_n(\hat\phi_{N,\perp}, C_N^{\perp})  \6x\;,
 \qquad
 U^\perp_n = \lim_{N\to\infty} U^\perp_{n,N}\;.
\end{equation} 
By construction, these random variables have (under the measure 
$\gamma^\perp_0$) zero mean and a variance bounded uniformly in $N$.
Furthermore we see that  
\begin{equation}
 \label{eq:dec11} 
 \int_{\T^2}H_3(\hat\phi_{N,\perp}(x), C_N)  \6x =
 U^\perp_{3,N}\;,
\end{equation} 
owing to the fact that $\hat\phi_{N,\perp}$ has zero mean, and  
\begin{equation}
 \label{eq:dec12} 
\int_{\T^2} H_4
(\hat\phi_{N,\perp}(x), C_N) \6 x=
 U^\perp_{4,N} - \frac{6}{L^2} U^{\perp}_{2,N} 
 + \frac{3}{L^2}\;. 
\end{equation} 
The following expression for the potential then follows immediately
from~\eqref{eq:dec05} and~\eqref{A5-1}.

\begin{prop}
\label{prop:wNperp} 
The potential can be decomposed as 
\begin{equation}
 \label{eq:dec20}
 \frac{1}{\eps} V_N(z_0,y_\perp) 
 = \frac{1}{\eps}q(z_0) + q_1(z_0,\eps) + g_{N,\perp}(y_\perp) 
 + w_{N,\perp}(z_0,y_\perp)\;,
\end{equation} 
where $q(z_0)$ and $g_{N,\perp}(y_\perp)$ are given in~\eqref{eq:dec06}, and 
\begin{align}
\nonumber
q_1(z_0,\eps)  
&= -\frac{3z_0^2}{2L^2} + \frac{3\eps}{4L^2}\;,  
\\
w_{N,\perp}(z_0,y_\perp)
&= \frac{3(z_0^2 - \eps)}{2L^2} U_{2,N}^\perp 
+ \frac{z_0}{L}\sqrt{\eps} U_{3,N}^\perp +
\frac14 \eps U_{4,N}^\perp\;.
\label{eq:dec21} 
\end{align}
\end{prop}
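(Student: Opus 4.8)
The plan is to take the expression \eqref{eq:dec05} for $\frac1\eps V_N(z_0,y_\perp)$ as the starting point and merely re-express each of the three Hermite integrals appearing there --- written with respect to the renormalisation constant $C_N$ --- in terms of the centred Wick monomials $U^\perp_{2,N}, U^\perp_{3,N}, U^\perp_{4,N}$ from \eqref{eq:dec10}, which are defined with respect to the transversal constant $C_N^\perp = C_N - 1/L^2$ of \eqref{eq:dec08}. Once this is done, \eqref{eq:dec20}--\eqref{eq:dec21} follows by collecting terms; there is no real obstacle here, only bookkeeping, the one point requiring care being the additive numerical constants produced by the change of constant.

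First I would recall that \eqref{eq:dec11} and \eqref{eq:dec12} are precisely the instances of the change-of-constant identity \eqref{A5-1} needed for the cubic and quartic terms: with $C_N - C_N^\perp = L^{-2}$, identity \eqref{A5-1} gives $H_3(\hat\phi_{N,\perp},C_N) = H_3(\hat\phi_{N,\perp},C_N^\perp) - \frac{3}{L^2}\hat\phi_{N,\perp}$, whose integral over $\T^2$ is $U^\perp_{3,N}$ because $\int_{\T^2}\hat\phi_{N,\perp}(x)\6x = 0$, and $H_4(\hat\phi_{N,\perp},C_N) = H_4(\hat\phi_{N,\perp},C_N^\perp) - \frac{6}{L^2}H_2(\hat\phi_{N,\perp},C_N^\perp) + \frac{3}{L^4}$, whose integral is $U^\perp_{4,N} - \frac{6}{L^2}U^\perp_{2,N} + \frac{3}{L^2}$ since $\abs{\T^2} = L^2$. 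For the quadratic term I still need the elementary instance $H_2(\hat\phi_{N,\perp},C_N) = H_2(\hat\phi_{N,\perp},C_N^\perp) - L^{-2}$, whence $\int_{\T^2}H_2(\hat\phi_{N,\perp},C_N)\6x = U^\perp_{2,N} - 1$.

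Substituting these three identities into \eqref{eq:dec05}, and using that $g_{N,\perp}(z_0,y_\perp)$ in \eqref{eq:dec06} does not actually depend on $z_0$, the $\frac14\cdot\frac{6z_0^2}{L^2}$-weighted quadratic integral contributes $\frac{3z_0^2}{2L^2}U^\perp_{2,N} - \frac{3z_0^2}{2L^2}$, the $\frac{z_0}{L}\sqrt\eps$-weighted cubic integral contributes $\frac{z_0}{L}\sqrt\eps\,U^\perp_{3,N}$, and the $\frac\eps4$-weighted quartic integral contributes $\frac\eps4 U^\perp_{4,N} - \frac{3\eps}{2L^2}U^\perp_{2,N} + \frac{3\eps}{4L^2}$. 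Collecting the $U^\perp_{2,N}$-terms produces the coefficient $\frac{3(z_0^2-\eps)}{2L^2}$ of $w_{N,\perp}$ in \eqref{eq:dec21}, while the leftover $z_0$- and $\eps$-dependent constants $-\frac{3z_0^2}{2L^2} + \frac{3\eps}{4L^2}$ are exactly $q_1(z_0,\eps)$, yielding \eqref{eq:dec20}--\eqref{eq:dec21}. The only place where a slip is easy to make is tracking those two purely numerical constants --- the $-1$ coming from $\int_{\T^2}H_2(\cdot,C_N)\6x$ and the $+3/L^2$ coming from $\int_{\T^2}H_4(\cdot,C_N)\6x$ --- so I would verify them explicitly, but beyond that the statement is a direct computation.
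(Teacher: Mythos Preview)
Your proposal is correct and follows exactly the approach indicated in the paper, which simply states that the result follows immediately from~\eqref{eq:dec05} and~\eqref{A5-1}. You have filled in precisely the bookkeeping the paper omits --- applying the change-of-constant identities with $C_N - C_N^\perp = L^{-2}$, using $\int_{\T^2}\hat\phi_{N,\perp}\,\6x = 0$, and tracking the two additive constants $-1$ and $3/L^2$ --- and the arithmetic is right throughout.
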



\subsection{Upper bound on the partition function}
\label{ssec_ZN_upper}

In order to obtain an upper bound on $\cZ_N(\eps)$, we will first perform the
integration over the fluctuating modes $y_\perp$, and then the integration over
the mean value $z_0$. The basic observation is the following rewriting of
$\cZ_N(\eps)$. 

\begin{prop}
\label{prop:ZN_ub0} 
\label{eq:prop_ZN_ub1}
The partition function is given by the integral 
\begin{equation}
\label{eq:ZN_ub01}
 \cZ_N(\eps) =
 \int_{-\infty}^\infty  \e^{-q(z_0)/\eps} 
 g(z_0,\eps) \6z_0\;,
\end{equation} 
where
\begin{equation}
\label{eq:ZN_ub02}
 g(z_0,\eps) = \e^{-q_1(z_0,\eps)}
 \prod_{0<\abs{k}\leqs N} \sqrt{\frac{2\pi\eps}{\lambda_k}} 
 \bigexpecin{\gamma^\perp_0}{\e^{-w_{N,\perp}(z_0,\cdot)}}\;.
\end{equation} 
\end{prop}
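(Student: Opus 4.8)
The plan is to start from the symmetry relation already derived and expressed in~\eqref{eq:ZN_lb01}, or rather to work directly with the full partition function $\cZ_N(\eps) = \int_{\R^{(2N+1)^2}} \e^{-V_N(z)/\eps}\6z$ written in the Fourier coordinates $z=(z_0,y_\perp)$. The key input is the longitudinal-transversal decomposition of the potential from Proposition~\ref{prop:wNperp}, which states that $\frac1\eps V_N(z_0,y_\perp) = \frac1\eps q(z_0) + q_1(z_0,\eps) + g_{N,\perp}(y_\perp) + w_{N,\perp}(z_0,y_\perp)$. The strategy is then simply Fubini: integrate first over the transversal variables $y_\perp$ for each fixed $z_0$, pulling the factors $\e^{-q(z_0)/\eps}$ and $\e^{-q_1(z_0,\eps)}$ out of the inner integral since they do not depend on $y_\perp$.

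First I would write
\[
 \cZ_N(\eps) = \int_{-\infty}^\infty \e^{-q(z_0)/\eps}\e^{-q_1(z_0,\eps)}
 \biggpar{\int_{\R^{(2N+1)^2-1}} \e^{-g_{N,\perp}(y_\perp) - w_{N,\perp}(z_0,y_\perp)} \6y_\perp} \6z_0\;.
\]
Next, the inner integral is recognised as a Gaussian expectation: by definition the measure $\6\gamma^\perp_0(y_\perp) = (\cN^\perp_0)^{-1}\e^{-g_{N,\perp}(y_\perp)}\6y_\perp$ with normalisation $\cN^\perp_0 = \prod_{0<\abs{k}\leqs N}\sqrt{2\pi/\lambda_k}$ as in~\eqref{eq:dec07}. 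Hence
\[
 \int \e^{-g_{N,\perp}(y_\perp)-w_{N,\perp}(z_0,y_\perp)}\6y_\perp
 = \cN^\perp_0 \,\bigexpecin{\gamma^\perp_0}{\e^{-w_{N,\perp}(z_0,\cdot)}}
 = \prod_{0<\abs{k}\leqs N}\sqrt{\frac{2\pi\eps}{\lambda_k}}\;\frac{1}{\eps^{((2N+1)^2-1)/2}}\cdot\eps^{((2N+1)^2-1)/2}\bigexpecin{\gamma^\perp_0}{\e^{-w_{N,\perp}(z_0,\cdot)}}\;,
\]
so combining the $z_0$-independent factors into the definition of $g(z_0,\eps)$ as in~\eqref{eq:ZN_ub02} yields the claim.

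The only genuine subtlety — and the place where I would be most careful — is bookkeeping of the $\eps$-powers coming from the scaling $z=\sqrt{\eps}y$ used implicitly when passing from the $z$-coordinates of $V_N$ to the rescaled fields $\hat\phi_N$ underlying $g_{N,\perp}$ and $w_{N,\perp}$; one must check that the Jacobian factors $\eps^{\#/2}$ distribute correctly so that each transversal mode contributes exactly a factor $\sqrt{2\pi\eps/\lambda_k}$ and no stray power of $\eps$ survives in front. A secondary point is integrability: to legitimately apply Fubini one should note that $w_{N,\perp}(z_0,\cdot)$ is a quadratic polynomial in the Wick variables $U^\perp_{n,N}$ bounded below (in fact $H_4(\cdot,C_N^\perp)\geqs -6(C_N^\perp)^2$ gives a $z_0$-uniform lower bound on $w_{N,\perp}$ on the relevant range, as will be exploited in Proposition~\ref{prop:ZN_ub01}), and the $\e^{-q(z_0)/\eps}$ factor is integrable in $z_0$ since $q(z_0)\to+\infty$; together these justify the interchange of integrals. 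No convergence estimate is needed here — this proposition is a pure identity — so the proof is essentially a clean application of Fubini's theorem once the decomposition of Proposition~\ref{prop:wNperp} is in hand.
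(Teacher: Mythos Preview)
Your proposal is correct and follows exactly the approach the paper intends: the paper states this proposition without proof, as an immediate consequence of the decomposition in Proposition~\ref{prop:wNperp} together with Fubini and the definition of the Gaussian measure $\gamma_0^\perp$. Your first display omits the Jacobian $\eps^{((2N+1)^2-1)/2}$ from the change of variables $z_\perp=\sqrt{\eps}\,y_\perp$, but you correctly identify and resolve this in the subsequent discussion; for finite $N$ the Fubini justification is automatic since $\e^{-V_N/\eps}$ is integrable, so the integrability remarks, while harmless, are not needed.
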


By standard, one-dimensional Laplace asymptotics, we expect the
integral~\eqref{eq:ZN_ub01} to be close to
$2\sqrt{\pi\eps}\e^{L^2/4\eps}g(L,\eps)$. 
As $g$ depends strongly on $N$, one has to use some care when performing the
Laplace asymptotics. The solution to this difficulty is to not carry out the
Laplace asymptotics directly for $\cZ_N(\eps)$, but instead for the ratio
$\cZ_N(\eps)/\capacity_A(B)$, see Section~\ref{ssec_laplace} below. Our aim is
thus to bound the expectation in~\eqref{eq:ZN_ub02}. In order to apply a Nelson
estimate, we will need a lower bound on $w_{N,\perp}(z_0,y_\perp)$. In fact, for
later use (see Proposition~\ref{prop:ZN_ub02}), we will derive a lower
bound for the slightly more general quantity
\begin{equation}
 \label{eq:ZN_ub20}
 w_{N,\perp}^{(\mu)}(z_0,y_\perp) 
 = \frac{3z_0^2}{2L^2} U_{2,N}^\perp
+ \mu \biggl( -\frac{3\eps}{2L^2} U_{2,N}^\perp
+ \frac{z_0}{L}\sqrt{\eps} U_{3,N}^\perp +
\frac14 \eps U_{4,N}^\perp\biggr)\;,
\end{equation} 
where $\mu$ is a real parameter. Note in particular that
$w_{N,\perp}^{(1)}(z_0,y_\perp)=w_{N,\perp}(z_0,y_\perp)$.
The proof of the following simple but useful lower bound is inspired by
Proposition~3.2 in~\cite{berglund2007metastability}. 

\begin{lemma}
\label{lem:wo3} 
For any $N\in\N$, $z_0\in\R$ and $\mu\in(0,\frac32)$,  
\begin{equation}
 \label{eq:woNperp_lb} 
 w_{N,\perp}^{(\mu)}(z_0,y_\perp)
 \geqs -D_N(z_0,\mu,\eps)\;,
\end{equation} 
where
\begin{equation}
 \label{eq:DN_z0_eps} 
 D_N(z_0,\mu,\eps) = 
 \frac32 z_0^2 C_N 
 + \frac34 \mu\eps C_N^2L^2 
 \biggl( \frac{3}{1-2\mu/3} - 1 \biggr)\;.
\end{equation} 
\end{lemma}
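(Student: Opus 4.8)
The plan is to express the Wick powers $U_{n,N}^\perp$ back in terms of ordinary (non-Wick) polynomials of $\hat\phi_{N,\perp}$ and the renormalisation constant $C_N^\perp$ (or, more conveniently here, $C_N$), and then to recognise the resulting expression as a sum of perfect squares plus a $y_\perp$-independent remainder. First I would use $C_N = C_N^\perp + 1/L^2$ together with the transformation rule~\eqref{A5-1} to rewrite each of the three Wick powers $H_n(\hat\phi_{N,\perp},C_N^\perp)$ appearing in $U_{2,N}^\perp$, $U_{3,N}^\perp$, $U_{4,N}^\perp$ in terms of $H_n(\hat\phi_{N,\perp},C_N)$. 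Substituting these into~\eqref{eq:ZN_ub20}, and recalling~\eqref{eq:A2} that $H_4(X,C) = (X^2-3C)^2 - 6C^2$ and $H_3(X,C) = X^3 - 3CX = X(X^2-3C)$ and $H_2(X,C) = X^2 - C$, I would collect the integrand of $w_{N,\perp}^{(\mu)}$ pointwise in $x$ as a quadratic-in-$(\hat\phi_{N,\perp}^2 - 3C_N)$ and $\hat\phi_{N,\perp}$ expression, aiming to complete the square.

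The key algebraic step: pointwise in $x$, write $a := \hat\phi_{N,\perp}(x)$ and combine the contributions. The $\mu\eps H_4/4$ term gives $\tfrac{\mu\eps}{4}\big[(a^2-3C_N)^2 - 6C_N^2\big]$; the $\mu\tfrac{z_0}{L}\sqrt\eps H_3$ term gives $\mu\tfrac{z_0}{L}\sqrt\eps\, a(a^2-3C_N)$; and the remaining $U_{2,N}^\perp$ contributions are linear in $a^2$ (hence in $(a^2-3C_N)$ up to constants). I would treat $\tfrac{\mu\eps}{4}(a^2-3C_N)^2 + \mu\tfrac{z_0}{L}\sqrt\eps\, a(a^2-3C_N) + (\text{linear in }a^2)$ as a quadratic form in the variables $(a^2-3C_N)$ and $a$ and complete the square, using $\mu < 3/2$ precisely to ensure the coefficient of $(a^2-3C_N)^2$ stays positive after absorbing part of the $U_{2,N}^\perp$ coefficient — this is where the factor $\tfrac{3}{1-2\mu/3}-1$ in~\eqref{eq:DN_z0_eps} comes from. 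What remains after completing the square is a nonnegative integrand plus a constant (in $y_\perp$) term; integrating over $x\in\T^2$ and using $\int_{\T^2}\hat\phi_{N,\perp}\,\6x = 0$, the constant term assembles into exactly $-D_N(z_0,\mu,\eps)$, with the $\tfrac32 z_0^2 C_N$ piece coming from the $\tfrac{3z_0^2}{2L^2}U_{2,N}^\perp$ term after re-expressing $U_{2,N}^\perp = \int(a^2 - C_N^\perp) = \int(a^2 - C_N) + 1$ and discarding the nonnegative $\int a^2$.

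The main obstacle is purely bookkeeping: getting all the constants right when passing between $C_N$ and $C_N^\perp$ via~\eqref{eq:ZN_lb07}/\eqref{eq:dec08} and when completing the square in two coupled variables, so that the leftover constant matches~\eqref{eq:DN_z0_eps} on the nose rather than merely up to a harmless additive constant. There is no analytic difficulty — every bound is an identity plus the single inequality "sum of squares $\geqs 0$" — but one must be careful that the square being completed is genuinely a square of a real quantity for every value of $z_0\in\R$ and every $\mu\in(0,\tfrac32)$, which is exactly guaranteed by the sign of $1-2\mu/3$. I would finish by remarking that since the bound is $y_\perp$-independent it immediately controls the conditional integral over the transversal modes, as needed for the Nelson-type argument in Proposition~\ref{prop:ZN_ub01}.
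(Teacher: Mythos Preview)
Your plan is sound and close in spirit to the paper's argument, but the organisation differs and you will \emph{not} recover the constant~\eqref{eq:DN_z0_eps} exactly. The paper does not work with the shifted variable $b=a^2-3C_N$; instead it factors the integrand as $\hat\phi_{N,\perp}^2\cdot[\,\text{quadratic in }\hat\phi_{N,\perp}\,]$ and performs two \emph{successive univariate} completions of squares: first inside the bracket (absorbing the cubic cross-term into the quadratic and constant, which is where the $\frac{2\mu}{3}$ appears), then on the resulting quartic $\alpha\hat\phi_{N,\perp}^4-\beta\hat\phi_{N,\perp}^2$. This two-step procedure is precisely what produces the factor $\frac{3}{1-2\mu/3}-1$.

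Your route --- viewing the integrand as the \emph{bivariate} quadratic form
\[
Q(a,b)=\tfrac{3z_0^2}{2L^2}a^2+\mu\tfrac{z_0}{L}\sqrt{\eps}\,ab+\tfrac{\mu\eps}{4}b^2,\qquad b=a^2-3C_N,
\]
and checking that its $2\times2$ matrix has determinant $\tfrac{\mu z_0^2\eps}{4L^2}(\tfrac32-\mu)\geqs0$ --- is correct and in fact yields the \emph{sharper} bound $D_N'(z_0,\mu,\eps)=\tfrac32 z_0^2 C_N+\tfrac32\mu\eps C_N^2 L^2$ (plus a harmless $O(1)$), with no blow-up as $\mu\uparrow\tfrac32$. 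So your explanation that ``this is where the factor $\tfrac{3}{1-2\mu/3}-1$ comes from'' is mistaken: that factor does not appear in your approach at all. Also, your phrasing about ``the coefficient of $(a^2-3C_N)^2$ staying positive'' is slightly off --- the $b^2$-coefficient is $\mu\eps/4$ throughout; it is the determinant of $Q$ (equivalently, the residual $b^2$-coefficient after completing the square in $a$) that requires $\mu<\tfrac32$. For the application in Proposition~\ref{prop:ZN_ub01} either bound suffices, since all that matters is $D_N\lesssim z_0^2\log N+\eps(\log N)^2$.
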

\begin{proof}
Using the definition~\eqref{eq:dec10} of Wick powers, we see that 
\begin{align}
\nonumber
 w_{N,\perp}^{(\mu)}(z_0,y_\perp)
 ={}& \frac14 \int_{\T^2} \hat\phi_{N,\perp}^2(x) 
 \biggl[ \mu\eps \hat\phi_{N,\perp}^2(x) 
 + 4\mu \frac{z_0}{L} \sqrt{\eps} \hat\phi_{N,\perp}(x)
 +6 \frac{z_0^2}{L^2} - 6\mu\eps C_N \biggr] \6x \\
 &{}- \frac32 (z_0^2-\mu\eps) C_N^\perp 
 + \frac34\mu\eps (C_N^\perp)^2L^2\;,
 \label{eq:DN_proof} 
\end{align}
where we have used the fact that $C_N^\perp + \frac{1}{L^2} = C_N$ as well
as $\int_{\T^2} \hat\phi_{N,\perp}(x) \6 x =0$. 
A completion-of-squares argument shows that the term in square brackets
in~\eqref{eq:DN_proof} is bounded below by
\[
 \mu\eps \biggl(1-\frac23\mu \biggr) \hat\phi_{N,\perp}^2(x) 
 - 6\mu\eps C_N\;.
\]
Performing a second completion of squares shows that the integral
in~\eqref{eq:DN_proof} is bounded below by 
\[
\frac{\mu\eps}{4}
 \int_{\T^2} \biggl[ \hat\phi_{N,\perp}^4(x) \Bigl(1-\frac23\mu \Bigr) 
 - 6C_N \hat\phi_{N,\perp}^2(x) \biggr] \6x 
 \geqs - \frac{9\mu\eps C_N^2L^2}{4(1-2\mu/3)}\;.
\]
The result follows, bounding $C_N^\perp$ above by $C_N$.  
\end{proof}

We are now in a position to imitate the proof of
Proposition~\ref{prop:nelson1}, to show the following upper bound.

\begin{prop}
\label{prop:ZN_ub01} 
There exist constants $M(\mu)$ and $\eps_0(\mu)$, uniform in $N$, $\eps$ and
$z_0$, such that 
\begin{equation}
 \label{eq:ZN_ub05} 
 \bigexpecin{\gamma^\perp_0}
 {\e^{-w_{N,\perp}^{(\mu)}(z_0,\cdot)}} 
 \leqs 
 M(\mu) \bigl[ 1 + \sqrt{\eps} \e^{M(\mu)z_0^2\log(1+z_0^2)/\sqrt{\eps}}
\bigr]
\end{equation} 
holds for any $\mu\in(0,\frac32)$ and all $\eps < \eps_0(\mu)$. 
\end{prop}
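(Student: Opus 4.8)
The plan is to mimic the Nelson argument from Proposition~\ref{prop:nelson1}, splitting the Gaussian expectation according to the sign of $w_{N,\perp}^{(\mu)}(z_0,\cdot)$ and using the a priori lower bound of Lemma~\ref{lem:wo3} together with the uniform-in-$N$ moment bounds on the random variables $U_{n,N}^\perp$. First I would write
\[
 \bigexpecin{\gamma^\perp_0}{\e^{-w_{N,\perp}^{(\mu)}(z_0,\cdot)}}
 = \bigexpecin{\gamma^\perp_0}{\e^{-w_{N,\perp}^{(\mu)}}
   \indicator{w_{N,\perp}^{(\mu)}\geqs0}}
 + \bigexpecin{\gamma^\perp_0}{\e^{-w_{N,\perp}^{(\mu)}}
   \indicator{w_{N,\perp}^{(\mu)}<0}}\;,
\]
where the first term is trivially bounded by $1$, and by Lemma~\ref{lem:wo3} the integrand in the second term is at most $\e^{D_N(z_0,\mu,\eps)}$, so that it equals $1+\int_0^\infty \e^t \fPin{\gamma^\perp_0}(-w_{N,\perp}^{(\mu)}>t)\6t$ with the integrand vanishing for $t\geqs D_N(z_0,\mu,\eps)$.

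Next I would bound the tail probability $\fPin{\gamma^\perp_0}(-w_{N,\perp}^{(\mu)}>t)$ by comparing the cutoff-$N$ field against a lower cutoff $M=M(t)$: for each component, $-w_{N,\perp}^{(\mu)}>t$ forces $w_{M,\perp}^{(\mu)}-w_{N,\perp}^{(\mu)}>t-D_M(z_0,\mu,\eps)$, and one applies Markov's inequality at an even power $p(t)$ together with the hypercontractivity estimate of Lemma~\ref{le:Nelson} (combined with \eqref{A5}, cf.~\eqref{A14}) to get a bound of the form $\const\,(p(t)-1)^{p(t)}\,M(t)^{-(1-\eta)p(t)/2}$ for any $\eta>0$. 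The crucial point is the choice of $M(t)$: one needs $t-D_M(z_0,\mu,\eps)\geqs1$, and since $D_M(z_0,\mu,\eps)$ is linear in $z_0^2$ (through the term $\tfrac32 z_0^2 C_M$) and $C_M\asymp \log M$, this forces $\log M(t)\lesssim t/z_0^2$ (up to lower-order terms and $\eps$-dependence). Choosing $p(t)\sim t^\beta$ for a suitable $\beta>\tfrac12$ and taking logarithms, the exponent in $\e^t\fPin{\gamma^\perp_0}(-w_{N,\perp}^{(\mu)}>t)$ becomes $t+\beta t^\beta\log t - c(1-\eta)\,p(t)\log M(t)$, which for large $t$ is dominated by the negative term $\sim t^{\beta+1}/z_0^2$, yielding a convergent integral whose value is controlled by a constant depending on $z_0$ and $\eps$.

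The main obstacle — and the reason the bound in \eqref{eq:ZN_ub05} has the awkward shape $M(\mu)[1+\sqrt\eps\,\e^{M(\mu)z_0^2\log(1+z_0^2)/\sqrt\eps}]$ rather than a clean constant — is tracking the $z_0$- and $\eps$-dependence carefully. The integral $\int_0^\infty \e^t\fPin{\gamma^\perp_0}(-w_{N,\perp}^{(\mu)}>t)\6t$ is genuinely of size $\e^{D_N(z_0,\mu,\eps)}$ only in the regime where the Nelson cancellation has not yet kicked in, i.e. for $t$ up to a threshold governed by when $t^{\beta+1}/z_0^2$ overtakes $t$; this threshold scales like $z_0^2\log(1+z_0^2)$ and the additional $1/\sqrt\eps$ arises because $p(t)\log M(t)$ must also beat a factor involving $\eps$ coming from the $\sqrt\eps$ prefactors of $U_{3,N}^\perp$ and $\eps$ of $U_{4,N}^\perp$ in $w_{N,\perp}^{(\mu)}$. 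So I would carefully split the $t$-integral at this threshold: below it, bound the integrand crudely by $\e^t$ and integrate to get the $\sqrt\eps\,\e^{M z_0^2\log(1+z_0^2)/\sqrt\eps}$ term; above it, use the Nelson decay to get a bounded (in fact $\Order1$) contribution. Finally I would restore the parameter $\mu$: all constants produced ($\const$ in the hypercontractivity bound, $c$, the threshold) depend on $\mu$ only through $1-2\mu/3$ and $\mu$ itself, which stay bounded away from $0$ and $\infty$ on compact subsets of $(0,\tfrac32)$, so absorbing everything into a single $M(\mu)$ and choosing $\eps_0(\mu)$ small enough completes the argument.
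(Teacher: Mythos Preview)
Your overall structure (split by sign, layer-cake representation, compare to a lower cutoff $M(t)$, Markov plus hypercontractivity) is the right one and matches the paper. However, there is a genuine gap in how you track the $z_0$- and $\eps$-dependence, and your diagnosis of where the factor $\e^{M(\mu)z_0^2\log(1+z_0^2)/\sqrt\eps}$ comes from is incorrect.

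The bound you write after Markov and Lemma~\ref{le:Nelson}, namely $\const\,(p(t)-1)^{p(t)}\,M(t)^{-(1-\eta)p(t)/2}$, is copied from Proposition~\ref{prop:nelson1}, where $w_N$ has only the single term $\tfrac14\eps U_{4,N}$ with a fixed coefficient. Here $w_{M,\perp}^{(\mu)}-w_{N,\perp}^{(\mu)}=\sum_{j=2}^4 a_j(z_0,\eps)\bigl(U_{j,M}^\perp-U_{j,N}^\perp\bigr)$ with $a_2=\tfrac{3}{2L^2}(z_0^2-\mu\eps)$, $a_3=\tfrac{z_0}{L}\sqrt\eps$, $a_4=\tfrac14\eps$, and Markov's inequality applied componentwise produces an extra factor $\lvert 3a_j\rvert^{p_j(t)}$. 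For $j=2$ this is of order $(z_0^2)^{p(t)}=\e^{p(t)\log(z_0^2)}$, and it is \emph{this} multiplicative factor, not a threshold in the $t$-integral, that generates the $\e^{M(\mu)z_0^2\log(1+z_0^2)/\sqrt\eps}$ term. Your statement that the $1/\sqrt\eps$ ``arises because $p(t)\log M(t)$ must also beat a factor involving $\eps$ coming from the $\sqrt\eps$ prefactors of $U_{3,N}^\perp$ and $\eps$ of $U_{4,N}^\perp$'' is backwards: those small prefactors help. The $\eps$-constraint actually enters through the term $\tfrac34\mu\eps C_M^2 L^2(\cdots)$ in $D_M$, which forces $\log M(t)\lesssim\sqrt{(t-1)/\eps}$ in addition to $\log M(t)\lesssim (t-1)/z_0^2$.

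Concretely, the paper does not take $p(t)\sim t^\beta$; it takes $p_j(t)=2\bigl\lfloor (t-1)^{1/2}\wedge z_0^2/\sqrt\eps\bigr\rfloor$ and $\log N(t)\sim\bigl((t-1)/\eps\bigr)^{1/2}\wedge(t-1)/z_0^2$. The cap $p_j(t)\leqs 2z_0^2/\sqrt\eps$ is precisely what keeps $\lvert 3a_2\rvert^{p_j(t)}\leqs\e^{c'z_0^2\log(1+z_0^2)/\sqrt\eps}$ bounded independently of $t$, while the choices combine to give $p_j(t)\log N(t)\sim c(t-1)/\sqrt\eps$ uniformly. This yields a tail bound $\fPin{\gamma_0^\perp}(-w_\perp^{(\mu)}>t)\leqs\const\,\e^{c'z_0^2\log(1+z_0^2)/\sqrt\eps}\e^{-c(t-1)/\sqrt\eps}$, and integrating $\e^t$ against $\e^{-ct/\sqrt\eps}$ gives the extra $\sqrt\eps$ prefactor in~\eqref{eq:ZN_ub05}. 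Your ``split the $t$-integral at the threshold and bound crudely by $\e^t$ below it'' would at best produce $\e^{M z_0^2\log(1+z_0^2)/\sqrt\eps}$ without the $\sqrt\eps$ in front, which is not the stated bound.
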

\begin{proof}
We will give the proof for $w_\perp^{(\mu)} =
\lim_{N\to\infty}w_{N,\perp}^{(\mu)}(z_0,\cdot)$, since the
same proof applies for any finite $N$. 
To be able to apply the integration-by-parts formula 
\begin{align*}
 \bigexpecin{\gamma^\perp_0}{\e^{-w_\perp^{(\mu)}}} 
 &\leqs \e + \int_1^\infty \e^t
 \bigprobin{\gamma^\perp_0}{-w_\perp^{(\mu)} > t}\6t \\
 &= \e \Bigbrak{1 + \int_0^\infty \e^t
\bigprobin{\gamma^\perp_0}{-w_\perp^{(\mu)} > 1+t}\6t}\;,
\end{align*}
we have to estimate $\probin{\gamma^\perp_0}{-w_\perp^{(\mu)} > t}$ when
$t\geqs 1$. For any such $t$, we pick an $N(t)\in\N$ such that 
\[
 t - D_{N(t)}(z_0,\mu,\eps) \geqs 1\;.
\]
Note that by~\eqref{eq:DN_z0_eps}, there exists a constant $M_0(\mu)$, uniform
in $N$, $\eps$ and $z_0$, such that 
\[
 D_N(z_0,\mu,\eps) \leqs M_0(\mu) \bigbrak{\eps(\log N)^2 + z_0^2\log N}\;.
\]
The condition on $N(t)$ is thus satisfied if we impose the condition
\begin{equation}
 \eps(\log N(t))^2 + z_0^2 \log N(t) \leqs \frac{t-1}{M_0(\mu)}\;. 
\label{eq:condition_N(t)} 
\end{equation}
By Lemma~\ref{lem:wo3} and the above condition, we have 
\[
 \bigprobin{\gamma_0^\perp}{-w_\perp^{(\mu)} > t} \leqs 
 \bigprobin{\gamma_0^\perp}{\abs{w_{N(t),\perp}^{(\mu)}-w_\perp^{(\mu)}} >
1}\;. 
\]
Now observe (c.f.~\eqref{eq:ZN_ub20}) that 
\[
 w_{N(t),\perp}^{(\mu)}-w_\perp^{(\mu)} = \sum_{j=2}^4 a_j(z_0,\eps)
(U_{j,N(t)}^\perp-U_j^\perp)\;,
\]
with 
\[
 a_2 = \frac3{2L^2} (z_0^2-\mu\eps)\;, 
 \qquad
 a_3 = \frac{z_0}{L}\sqrt{\eps}\;, 
 \qquad
 a_4 = \frac14\eps\;.
\]
It follows that 
\[
 \bigprobin{\gamma_0^\perp}{\abs{w_{N(t),\perp}^{(\mu)}-w_\perp^{(\mu)}} > 1} 
 \leqs \sum_{j=2}^4 P_j\;, 
 \qquad
 P_j = \biggprobin{\gamma_0^\perp}{\abs{a_j}\abs{U_{j,N(t)}^\perp-U_j^\perp} >
\frac13}\;.
\]
For any choice of $p_j(t)\in 2\N$, we have by Markov's inequality 
\[
 P_j \leqs \abs{3a_j(z_0,\eps)}^{p_j(t)} E_j\;, 
 \qquad
 E_j = \bigexpecin{\gamma_0^\perp}{\abs{U_{j,N(t)}^\perp-U_j^\perp}^{p_j(t)}}
\]
where by Nelson's estimate~\eqref{eq:A-Nelson} 
\[
 E_j {}\leqs \const (p_j(t)-1)^{jp_j(t)/2} 
 \bigexpecin{\gamma_0^\perp}{\abs{U_{j,N(t)}^\perp-U_j^\perp}^2}^{p_j(t)/2}
 {}\leqs \const \frac{(p_j(t)-1)^{jp_j(t)/2}}{N(t)^{p_j(t)}}\;.
\]
A possible choice is to take (where $a\wedge b:=\min\set{a,b}$ and $\intpart{a}$
is the integer part of $a$)
\begin{align*}
 p_j(t) &= 2 \biggintpart{(t-1)^{1/2}\wedge \frac{z_0^2}{\sqrt\eps}} \\
 \log N(t) &= \frac1{M_1}\biggbrak{\biggpar{\frac{t-1}{\eps}}^{1/2} \wedge
\frac{t-1}{z_0^2}}\;,
\end{align*}
with $M_1$ large enough to satisfy~\eqref{eq:condition_N(t)}. Indeed, this
yields $\log(N(t)^{p_j(t)}) \simeq c(t-1)/\sqrt{\eps}$ for some $c=c(\mu)>0$,
and thus 
\[
 \bigprobin{\gamma_0^\perp}{-w_\perp^{(\mu)} > t} {}\leqs \const
\e^{c'\log(1+z_0^2)z_0^2/\sqrt{\eps}}\e^{-c(t-1)/\sqrt{\eps}}
\]
for some $c'>0$, where the first exponential is due to the term
$\abs{3a_2}^{p_2(t)}$.
This shows that 
\[
 \int_0^\infty \e^t
\bigprobin{\gamma^\perp_0}{-w_\perp^{(\mu)} > 1+t}\6t 
\leqs \const \e^{c'\log(1+z_0^2)z_0^2/\sqrt{\eps}}
\Bigl( \frac{c}{\sqrt{\eps}} - 1 \Bigr)^{-1}
\]
if $\eps < c^2$. 
Substituting in the integration-by-parts formula proves the claim.
\end{proof}

Our aim is now to sharpen this bound by applying a similar trick as in the proof
of Proposition~\ref{prop:nelson2}. To this end, it will be convenient to work
with Gaussian measures $\gamma_{z_0}^\perp$, defined by the quadratic form 
\begin{equation}
 \label{eq:ZN_gamma_z0}
 g_{N,\perp,z_0}(y_\perp) 
 = \sum_{0<\abs{k}\leqs N} 
 \biggl[ \lambda_k + \frac{3z_0^2}{L^2} \biggr] \abs{y_k}^2\;.
\end{equation} 
The following result allows converting between expectations with respect to
$\gamma^\perp_0$ and $\gamma^\perp_{z_0}$.

\begin{lemma}
\label{lem:wo2} 
For any random variable $X=X(y_\perp)$ integrable with respect to
$\gamma^\perp_0$, 
\begin{equation}
 \label{eq:ZN_ub03} 
 \expecin{\gamma^\perp_0}{X} 
 = K(z_0) \expecin{\gamma^\perp_{z_0}}
 {X \e^{3z_0^2 U_{2,N}^\perp/2L^2}}\;,
\end{equation} 
where 
\begin{equation}
\nonumber
 K(z_0) = 
 \biggl[ \prod_{0 < \abs{k}\leqs N} 
 \frac{\e^{3z_0^2/L^2\lambda_k}}{1 + 3z_0^2/L^2\lambda_k}
 \biggr]^{1/2}\;.
\label{eq:ZN_ub04} 
\end{equation} 
\end{lemma}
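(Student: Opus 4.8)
The plan is to read~\eqref{eq:ZN_ub03} as a finite-dimensional Cameron--Martin-type identity. Restricted to the transversal coordinates $(y_k)_{0<\abs k\leqs N}$, both $\gamma^\perp_0$ and $\gamma^\perp_{z_0}$ are centred Gaussian product measures with explicit diagonal covariances, so~\eqref{eq:ZN_ub03} is nothing but the corresponding change-of-density formula, followed by a cosmetic rewriting of the quadratic exponent through the Wick power $U_{2,N}^\perp$. First I would record the two densities: $\gamma^\perp_0$ has Lebesgue density proportional to $\exp\bigpar{-\frac12\sum_{0<\abs k\leqs N}\lambda_k\abs{y_k}^2}$ with normalisation $\cN^\perp_0 = \prod_{0<\abs k\leqs N}\sqrt{2\pi/\lambda_k}$ (cf.~\eqref{eq:dec07}), while $g_{N,\perp,z_0}$ from~\eqref{eq:ZN_gamma_z0} defines the centred Gaussian $\gamma^\perp_{z_0}$ with covariance $\bigpar{\lambda_k + 3z_0^2/L^2}^{-1}$ in mode $k$, hence normalisation $\cN^\perp_{z_0} = \prod_{0<\abs k\leqs N}\sqrt{2\pi/(\lambda_k + 3z_0^2/L^2)}$. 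Since $0<L<2\pi$ forces $\lambda_k>0$ for every $k\neq0$, one has $\lambda_k + 3z_0^2/L^2>0$, so that $\gamma^\perp_{z_0}$ is a genuine probability measure for all $z_0\in\R$.

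Next I would compute the Radon--Nikodym derivative, whose prefactor is just the ratio $\cN^\perp_{z_0}/\cN^\perp_0$ of normalisation constants:
\[
 \frac{\6{\gamma^\perp_0}}{\6{\gamma^\perp_{z_0}}}(y_\perp)
 = \biggpar{\prod_{0<\abs k\leqs N}\Bigpar{1 + \frac{3z_0^2}{L^2\lambda_k}}}^{-1/2}
 \exp\biggpar{\frac{3z_0^2}{2L^2}\sum_{0<\abs k\leqs N}\abs{y_k}^2}\;.
\]
Then $\expecin{\gamma^\perp_0}{X} = \expecin{\gamma^\perp_{z_0}}{X\,\frac{\6{\gamma^\perp_0}}{\6{\gamma^\perp_{z_0}}}}$ for any $X$ integrable against $\gamma^\perp_0$. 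It remains to massage the quadratic exponent: by Parseval~\eqref{eq:dec04}, the definition~\eqref{eq:dec10} of $U_{2,N}^\perp$, and the identity $C_N^\perp = L^{-2}\sum_{0<\abs k\leqs N}\lambda_k^{-1}$ from~\eqref{eq:dec08}, one has
\[
 \sum_{0<\abs k\leqs N}\abs{y_k}^2
 = \int_{\T^2}\hat\phi_{N,\perp}^2(x)\6x
 = U_{2,N}^\perp + C_N^\perp L^2
 = U_{2,N}^\perp + \sum_{0<\abs k\leqs N}\frac1{\lambda_k}\;.
\]
Substituting this splits $\exp\bigpar{\frac{3z_0^2}{2L^2}\sum\abs{y_k}^2}$ into the random factor $\exp\bigpar{\frac{3z_0^2}{2L^2}U_{2,N}^\perp}$, which stays inside the expectation, times the deterministic factor $\exp\bigpar{\frac{3z_0^2}{2L^2}\sum\lambda_k^{-1}} = \bigpar{\prod_{0<\abs k\leqs N}\e^{3z_0^2/L^2\lambda_k}}^{1/2}$; multiplying this by the determinant prefactor $\bigpar{\prod_{0<\abs k\leqs N}(1+3z_0^2/L^2\lambda_k)}^{-1/2}$ gives exactly $K(z_0)$ of~\eqref{eq:ZN_ub04}, and~\eqref{eq:ZN_ub03} follows.

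There is no genuine obstacle: the whole computation is elementary finite-dimensional Gaussian algebra, and the only thing worth underlining is conceptual. Expressing $\sum\abs{y_k}^2$ through the \emph{zero-mean} Wick power $U_{2,N}^\perp$ subtracts off precisely $\sum\lambda_k^{-1}$, and this subtracted piece recombines with the ratio of Gaussian normalisation constants into the convergent Carleman--Fredholm-type product $K(z_0)$, rather than leaving two separately divergent-as-$N\to\infty$ factors; this is the mechanism by which the renormalisation will later feed into the prefactor. One may also note in passing that $X\exp\bigpar{3z_0^2U_{2,N}^\perp/2L^2}$ is automatically $\gamma^\perp_{z_0}$-integrable whenever $X$ is $\gamma^\perp_0$-integrable, since it equals $K(z_0)^{-1}X\,\frac{\6{\gamma^\perp_0}}{\6{\gamma^\perp_{z_0}}}$.
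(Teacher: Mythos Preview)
Your proof is correct and follows exactly the approach sketched in the paper: write out the Gaussian densities, compute the Radon--Nikodym derivative, and rewrite $\sum_{0<\abs{k}\leqs N}\abs{y_k}^2$ in terms of $U_{2,N}^\perp$ via~\eqref{eq:dec04},~\eqref{eq:dec08} and~\eqref{eq:dec10}. Your additional remarks on why $\gamma^\perp_{z_0}$ is a genuine probability measure and on the automatic $\gamma^\perp_{z_0}$-integrability of $X\e^{3z_0^2 U_{2,N}^\perp/2L^2}$ are welcome clarifications that the paper leaves implicit.
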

\begin{proof}
This follows from a short computation, writing out explicitly the density of
$\gamma^\perp_0$ and expressing $\sum_k \abs{y_k}^2$ in terms of
$U_{2,N}^\perp$.
\end{proof}

\begin{remark}
\label{rem:K(z0)}
Writing $\zeta_k = 3z_0^2/L^2\lambda_k$ and using the fact that the Taylor
series of $\log(1+\zeta_k)$ is alternating, we obtain 
\begin{equation}
 \label{eq:ZN_logK}
 2\log K(z_0) = \sum_{0<\abs{k}\leqs N} 
 \bigl[ \zeta_k - \log(1+\zeta_k) \bigr] 
 \leqs \frac12 \sum_{0<\abs{k}\leqs N} \zeta_k^2 
 {}\leqs \const z_0^4\;.
\end{equation} 
This shows that $K(z_0) \leqs \e^{M_1 z_0^4}$ for some constant $M_1$,
independent of $N$ and $z_0$.
\end{remark}

We can now state the sharper bound on the expectation of $\e^{-w_{N,\perp}}$.

\begin{prop}
\label{prop:ZN_ub02}
There exists a constant $M>0$, uniform in $N$, $\eps$ and $z_0$, such that 
\begin{equation}
 \label{eq:ZN_ub06}
 \bigexpecin{\gamma^\perp_0}{\e^{-w_{N,\perp}(z_0,\cdot)}} 
 \leqs K(z_0) \Bigl[ 1 + 
 M \sqrt{\eps} (1+\abs{z_0}) 
 \bigl(1 + \sqrt\eps \e^{Mz_0^2\log(1+z_0^2)/\sqrt{\eps}}\bigr)
 \Bigr]\;.
\end{equation} 
\end{prop}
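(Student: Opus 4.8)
The plan is to follow the strategy of Proposition~\ref{prop:nelson2}, upgrading the rough a priori bound of Proposition~\ref{prop:ZN_ub01} to one with the sharp leading constant. First I would use Lemma~\ref{lem:wo2} to pass from the reference measure $\gamma^\perp_0$ to the shifted Gaussian measure $\gamma^\perp_{z_0}$, writing
\[
 \bigexpecin{\gamma^\perp_0}{\e^{-w_{N,\perp}(z_0,\cdot)}}
 = K(z_0) \bigexpecin{\gamma^\perp_{z_0}}
 {\e^{3z_0^2 U_{2,N}^\perp/2L^2 - w_{N,\perp}(z_0,\cdot)}}\;.
\]
Recalling the expression~\eqref{eq:dec21} for $w_{N,\perp}$, the leading quadratic term $\frac{3z_0^2}{2L^2}U_{2,N}^\perp$ cancels against the factor coming from the change of measure, leaving
\[
 \bigexpecin{\gamma^\perp_0}{\e^{-w_{N,\perp}(z_0,\cdot)}}
 = K(z_0) \bigexpecin{\gamma^\perp_{z_0}}
 {\e^{-\widetilde w_{N,\perp}(z_0,\cdot)}}\;,
\]
where $\widetilde w_{N,\perp}(z_0,\cdot) = -\frac{3\eps}{2L^2} U_{2,N}^\perp + \frac{z_0}{L}\sqrt{\eps}\,U_{3,N}^\perp + \frac14\eps\,U_{4,N}^\perp$ collects only the terms carrying an explicit $\sqrt\eps$ or $\eps$. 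Crucially, under $\gamma^\perp_{z_0}$ the Wick powers $U_{n,N}^\perp$ are defined with the wrong renormalisation constant $C_N^\perp$ rather than the one natural to $\gamma^\perp_{z_0}$; using the transformation rule~\eqref{A5-1} one rewrites them in terms of the Wick powers adapted to $\gamma^\perp_{z_0}$, which introduces only lower-order corrections that are themselves $\Order{\eps}$ times polynomially bounded in $z_0$ (via the bound $\log K(z_0)\leqs\const z_0^4$ of Remark~\ref{rem:K(z0)} and the logarithmic bounds on $C_N$). The point is that after this step $\widetilde w_{N,\perp}$ has zero mean and variance of order $\eps^2(1+z_0^2)$ under $\gamma^\perp_{z_0}$, uniformly in $N$.

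Next I would split the expectation over the sets $\Omega_+ = \{\widetilde w_{N,\perp}>0\}$ and $\Omega_- = \Omega_+^c$ exactly as in Proposition~\ref{prop:nelson2}. On $\Omega_+$ one has $-\eps|\widetilde w_{N,\perp}|\leqs \e^{-\widetilde w_{N,\perp}}-1\leqs 0$ after noting that $\widetilde w_{N,\perp}$ already carries a factor $\sqrt\eps$, so this contribution is bounded by $\fP^{\gamma^\perp_{z_0}}(\Omega_+) + \sqrt\eps\,\const(1+|z_0|)$ using Cauchy--Schwarz and the variance bound. On $\Omega_-$ one uses $0\leqs \e^{-\widetilde w_{N,\perp}}-1\leqs |\widetilde w_{N,\perp}|\e^{-\widetilde w_{N,\perp}}$ and Cauchy--Schwarz to bound the excess by
\[
 \sqrt\eps\,\const(1+|z_0|)\,\bigexpecin{\gamma^\perp_{z_0}}{\e^{-2\widetilde w_{N,\perp}}\indicator{\Omega_-}}^{1/2}\;,
\]
and the surviving exponential moment is controlled, after converting back with Lemma~\ref{lem:wo2} (at the cost of a bounded factor), by the a priori estimate of Proposition~\ref{prop:ZN_ub01}, which gives the $\sqrt\eps\,\e^{Mz_0^2\log(1+z_0^2)/\sqrt\eps}$ term. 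Collecting the two pieces and factoring out $K(z_0)$ yields exactly the stated bound~\eqref{eq:ZN_ub06}.

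The main obstacle I anticipate is bookkeeping the $z_0$-dependence uniformly in both $\eps$ and $N$: the change-of-measure factor $K(z_0)$ grows like $\e^{\const z_0^4}$, the Nelson-type a priori bound contributes $\e^{Mz_0^2\log(1+z_0^2)/\sqrt\eps}$, and the discrepancy between the renormalisation constants $C_N^\perp$ and the one natural to $\gamma^\perp_{z_0}$ must be shown to produce only corrections that can be absorbed into these. One has to be careful that the cancellation of the quadratic term is exact (so that no uncontrolled $z_0^2 C_N$ survives outside $K(z_0)$) and that every application of Cauchy--Schwarz pairs the small factor $\sqrt\eps$ with a quantity whose second moment is genuinely bounded uniformly in $N$ — which is guaranteed by the uniform variance bounds on the $U_{n,N}^\perp$ recorded after~\eqref{eq:dec10}. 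Beyond this, the argument is a routine adaptation of Proposition~\ref{prop:nelson2}.
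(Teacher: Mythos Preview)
Your overall strategy matches the paper's: apply Lemma~\ref{lem:wo2} to factor out $K(z_0)$ and reduce to $\expecin{\gamma^\perp_{z_0}}{\e^{-\hat w_{N,\perp}}}$ with $\hat w_{N,\perp}=-\frac{3\eps}{2L^2}U_{2,N}^\perp+\frac{z_0}{L}\sqrt\eps\,U_{3,N}^\perp+\frac14\eps\,U_{4,N}^\perp$, then bound the excess over $1$ on the set $\{\hat w_{N,\perp}<0\}$ by a moment of $\hat w_{N,\perp}$ times an exponential moment, the latter being converted back to $\gamma^\perp_0$ via Lemma~\ref{lem:wo2} and controlled by Proposition~\ref{prop:ZN_ub01}. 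Your remark about rewriting the $U_{n,N}^\perp$ as Wick powers adapted to $\gamma^\perp_{z_0}$ is well taken; the paper cites~\eqref{A13} directly, implicitly relying on exactly this adjustment (the discrepancy $C_N^\perp-C_{N,z_0}^\perp$ being bounded uniformly in $N$).

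There is, however, one genuine gap. You use Cauchy--Schwarz on $\Omega_-$, which after converting back with Lemma~\ref{lem:wo2} produces $\expecin{\gamma^\perp_0}{\e^{-w_{N,\perp}^{(2)}}}$, i.e.\ the quantity of Proposition~\ref{prop:ZN_ub01} with $\mu=2$. But that proposition is stated only for $\mu\in(0,\tfrac32)$, because the lower bound of Lemma~\ref{lem:wo3} relies on the factor $1-2\mu/3$ in~\eqref{eq:DN_z0_eps} being strictly positive: in the completion-of-squares argument the cubic term $\mu\frac{z_0}{L}\sqrt\eps\,\hat\phi_{N,\perp}^3$ must be absorbed by the quadratic term $\frac{3z_0^2}{L^2}\hat\phi_{N,\perp}^2$ (whose coefficient does \emph{not} scale with $\mu$) together with the quartic, and this fails once $\mu\geqs\tfrac32$. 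So Proposition~\ref{prop:ZN_ub01} as written does not cover $\mu=2$, and there is no obvious way to recover the needed bound for that value without a new argument.

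The fix is simple and is exactly what the paper does: replace Cauchy--Schwarz by H\"older with conjugate exponents $p,q$ and some fixed $q\in(1,\tfrac32)$. The $p$-th moment of $\hat w_{N,\perp}$ under $\gamma^\perp_{z_0}$ is still $\Order{\sqrt\eps(1+\abs{z_0})}$ by Nelson's estimate (Lemma~\ref{le:Nelson}), and the exponential factor becomes $\expecin{\gamma^\perp_0}{\e^{-w_{N,\perp}^{(q)}}}^{1/q}$, to which Proposition~\ref{prop:ZN_ub01} now applies legitimately. With this single modification your argument goes through.
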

\begin{proof}
To lighten the notation, we drop the argument $(z_0,\cdot)$ of
$w_{N,\perp}$.
By Lemma~\ref{lem:wo2}, we have 
\[
 \bigexpecin{\gamma^\perp_0}{\e^{-w_{N,\perp}}} 
 = K(z_0)\bigexpecin{\gamma^\perp_{z_0}}{\e^{-\hat w_{N,\perp}}} 
\]
where
\[
 \hat w_{N,\perp} = 
 -\frac{3\eps}{2L^2} U_{2,N}^\perp
 + \frac{z_0}{L}\sqrt{\eps} U_{3,N}^\perp +
 \frac14 \eps U_{4,N}^\perp\;.
\]
As in the proof of Proposition~\ref{prop:nelson2}, we write  
\begin{align*}
\bigexpecin{\gamma^\perp_{z_0}}{\e^{-\hat w_{N,\perp}}}  
&\leqs 1 + \bigexpecin{\gamma^\perp_{z_0}}{(\e^{-\hat w_{N,\perp}} -
1)\indexfct{\hat w_{N,\perp}<0}} \\
&\leqs 1 +
\bigexpecin{\gamma^\perp_{z_0}}{\abs{\hat
w_{N,\perp}}\e^{-\hat w_{N,\perp}}\indexfct{ \hat w_{N,\perp}<0}}
\\
&\leqs 1 + \bigexpecin{\gamma^\perp_{z_0}}{(\hat
w_{N,\perp})^p}^{1/p} 
\bigexpecin{\gamma^\perp_{z_0}}{\e^{-q\hat w_{N,\perp}}}^{1/q}\;.
\end{align*}
In the last line, we have used H\"older's inequality, and $p,q\geqs1$ are
H\"older conjugates. It follows from~\eqref{A13} that 
\[
 \bigexpecin{\gamma^\perp_{z_0}}{(\hat
w_{N,\perp})^p}^{1/p} {}\leqs \const \sqrt{\eps}(1+\abs{z_0})\;.
\]
Furthermore, another application of Lemma~\ref{lem:wo2} yields
\begin{align*}
 \bigexpecin{\gamma^\perp_{z_0}}{\e^{-q\hat w_{N,\perp}}} 
 &= \frac{1}{K(z_0)} 
 \bigexpecin{\gamma^\perp_0}{\e^{-q\hat w_{N,\perp} - 3z_0^2
U_{2,N}^\perp/2L^ 2}}\\
 &= \frac{1}{K(z_0)} 
 \bigexpecin{\gamma^\perp_0}{\e^{-w^{(q)}_{N,\perp}}}\;. 
\end{align*}
Applying Proposition~\ref{prop:ZN_ub01} for some $q\in(1,\frac32)$  
and combining the different estimates yields the result. 
\end{proof}


\subsection{Lower bound on the capacity}
\label{ssec_capacity_lower}

Assume that $A=-I\times A_\perp$ and $B=I\times A_\perp$ where $\pm I =
\pm[1-\delta,1+\delta]$ (with $0<\delta<1$) are small intervals around the two
stationary solutions. Let $D=J\times D_\perp$ where $J=[-\rho,\rho]$ is an
interval joining $-I$ and $I$, with $\rho=1-\delta$, and $D_\perp \subset
A_\perp$ (see \figref{fig:lowerbound} -- when working with Fourier
variables, coordinates are scaled by a factor $L$). Then we have 
\begin{align}
\nonumber
 \capacity_A(B) &\geqs \eps\int_D \biggabs{\dpar{h_{A,B}(z)}{z_0}}^2
\e^{-V_N(z)/\eps}\6z \\
& \geqs \eps\int_{D_\perp} 
\biggbrak{\inf_{f\colon f(-\rho)=1, f(\rho)=0} \int_{-\rho}^\rho
\e^{-V_N(z_0,z_\perp)/\eps}f'(z_0)^2\6z_0} \6z_\perp\;.
\label{eq:cap_lb01} 
\end{align}
Writing the Euler--Lagrange equations, it is easy to see that the minimiser for
the term in brackets is such that 
\begin{equation}
 \label{eq:cap_lb02} 
 f'(z_0) = \frac{\e^{V_N(z_0,z_\perp)/\eps}}{
 \displaystyle \int_{-\rho}^\rho
\e^{V_N(y,z_\perp)/\eps}\6y}\;.
\end{equation} 
This yields the lower bound 
\begin{equation}
 \label{eq:cap_lb03} 
 \capacity_A(B) \geqs \eps\int_{D_\perp} \frac{1}{
 \displaystyle \int_{-\rho}^\rho
\e^{V_N(z_0,z_\perp)/\eps}\6z_0} \6z_\perp\;.
\end{equation} 

\begin{prop}
\label{prop:cap_lb01} 
There exists a constant $c_->0$, uniform in $\eps$ and $N$, such that 
\begin{equation}
 \label{eq:cap_lb20}
 \capacity_A(B) \geqs 
 \sqrt{\frac{\eps\abs{\lambda_0}}{2\pi}}
 \prod_{0<\abs{k}\leqs N}
 \sqrt{\frac{2\pi\eps}{\lambda_k}}
 \bigl[ \fP^{\gamma_0^\perp}(\hat D_\perp) - c_-\sqrt\eps \, \bigr]\;,
\end{equation} 
where $\hat D_\perp = D_\perp/\sqrt{\eps}$. 
\end{prop}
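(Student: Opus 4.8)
The plan is to start from the elementary lower bound~\eqref{eq:cap_lb03},
\[
 \capacity_A(B) \geqs \eps\int_{D_\perp}
 \biggpar{\int_{-\rho}^\rho \e^{V_N(z_0,z_\perp)/\eps}\6z_0}^{-1}\6z_\perp\;,
\]
and to reduce the proof to a sharp Laplace-type upper bound on the inner integral. Performing the scaling $z_\perp=\sqrt\eps\,y_\perp$ (so that $D_\perp$ becomes $\hat D_\perp$ and the Jacobian contributes $\eps^{((2N+1)^2-1)/2}$) and using the decomposition~\eqref{eq:dec20} of Proposition~\ref{prop:wNperp}, one gets
\[
 \int_{-\rho}^\rho \e^{V_N(z_0,z_\perp)/\eps}\6z_0
 = \e^{g_{N,\perp}(y_\perp)}\,J_N(y_\perp)\;,
 \qquad
 J_N(y_\perp):=\int_{-\rho}^\rho
 \e^{q(z_0)/\eps + q_1(z_0,\eps) + w_{N,\perp}(z_0,y_\perp)}\6z_0\;,
\]
with $q,q_1,w_{N,\perp}$ as in~\eqref{eq:dec06} and~\eqref{eq:dec21}. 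Recalling $\cN^\perp_0$ from~\eqref{eq:dec07} and $\int_{\hat D_\perp^\circ}\e^{-g_{N,\perp}}\6y_\perp=\cN^\perp_0\,\fPin{\gamma_0^\perp}(\hat D_\perp^\circ)$, it suffices to prove $J_N(y_\perp)\leqs \sqrt{2\pi\eps/\abs{\lambda_0}}\,(1+c\sqrt\eps)$ uniformly in $N$ on a set $\hat D_\perp^\circ\subseteq\hat D_\perp$ with $\fPin{\gamma_0^\perp}(\hat D_\perp\setminus\hat D_\perp^\circ)\leqs c\sqrt\eps$; bounding $\int_{\hat D_\perp}\geqs\int_{\hat D_\perp^\circ}$ and using $\eps^{((2N+1)^2+1)/2}\cN^\perp_0/\sqrt{2\pi\eps/\abs{\lambda_0}}=\sqrt{\eps\abs{\lambda_0}/2\pi}\prod_{0<\abs k\leqs N}\sqrt{2\pi\eps/\lambda_k}$ together with $\fPin{\gamma_0^\perp}(\hat D_\perp^\circ)\geqs\fPin{\gamma_0^\perp}(\hat D_\perp)-c\sqrt\eps$ then gives exactly the claimed inequality.

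For the truncation set I would take $\hat D_\perp^\circ=\bigsetsuch{y_\perp\in\hat D_\perp}{\abs{U^\perp_{n,N}}\leqs R_n(\eps),\ n=2,3,4}$, with thresholds $R_n(\eps)$ equal to fixed powers of $\log(\eps^{-1})$. Since each $U^\perp_{n,N}$ lies in the $n$-th Wiener chaos and has variance bounded uniformly in $N$ (see the discussion after~\eqref{eq:dec10}), the Nelson/hypercontractivity estimate~\eqref{eq:A-Nelson} yields a uniform-in-$N$ tail bound $\fPin{\gamma_0^\perp}\bigset{\abs{U^\perp_{n,N}}>R}\leqs\e^{-c_nR^{2/n}}$, and choosing $c_nR_n(\eps)^{2/n}=\log(\eps^{-1})$ forces $\fPin{\gamma_0^\perp}(\hat D_\perp\setminus\hat D_\perp^\circ)=\Order\eps$. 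To bound $J_N(y_\perp)$ for $y_\perp\in\hat D_\perp^\circ$ I would split $[-\rho,\rho]$ at $\abs{z_0}=\eps^{1/2-\kappa}$ for a small fixed $\kappa\in(0,\tfrac18)$. On the inner piece, $q(z_0)/\eps=-\abs{\lambda_0}z_0^2/2\eps+z_0^4/4L^2\eps$ with $z_0^4/4L^2\eps=\Order{\eps^{1-4\kappa}}$, while $\abs{q_1(z_0,\eps)}=\Order\eps$, and inserting $\abs{z_0}\leqs\eps^{1/2-\kappa}$ and $\abs{U^\perp_{n,N}}\leqs R_n(\eps)$ into~\eqref{eq:dec21} gives $\abs{w_{N,\perp}(z_0,y_\perp)}=\order{\sqrt\eps}$ uniformly; hence the inner integral is at most $(1+\Order{\sqrt\eps})\int_\R\e^{-\abs{\lambda_0}z_0^2/2\eps}\6z_0=(1+\Order{\sqrt\eps})\sqrt{2\pi\eps/\abs{\lambda_0}}$. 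On the outer piece one uses $q(z_0)\leqs-c_1 z_0^2$ on $[-\rho,\rho]$ for some $c_1>0$ (which holds because $\rho<L\sqrt2$, so $J$ stays strictly inside the zero set of $q$), whence $q(z_0)/\eps\leqs-c_1\eps^{-2\kappa}$; this dominates $q_1\leqs 3\eps/4L^2$ and the now merely polylogarithmically large $w_{N,\perp}$, so the outer integral is $\Order{\e^{-c_1\eps^{-2\kappa}/2}}$ and negligible. This delivers the required uniform upper bound on $J_N$.

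The main obstacle is precisely this passage to a truncated set: the quartic Wick term $\tfrac14\eps\,U^\perp_{4,N}$ in $w_{N,\perp}$ is not bounded above uniformly in $N$ on $\hat D_\perp$ — a ball in $H^s$ with $s<0$ gives no control of $\int_{\T^2}\hat\phi_{N,\perp}^4$ — so $J_N$ genuinely cannot be estimated pointwise on all of $\hat D_\perp$, and the Laplace analysis must be confined to $\hat D_\perp^\circ$. What makes this harmless is the stretched-exponential nature of the Wiener-chaos tails, which keeps the truncation levels polylogarithmic and hence the discarded mass $\Order\eps\subseteq\Order{\sqrt\eps}$. The residual $\sqrt\eps$ appearing in the statement is an artifact of using uniform (rather than integrated) bounds on the $z_0$-dependent corrections, in the spirit of Remark~\ref{rem:epsilon}; a more careful expansion of the inner integral would improve it to $\Order\eps$.
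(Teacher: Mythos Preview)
Your proof is correct, but it takes a genuinely different route from the paper's.

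The paper does \emph{not} attempt a pointwise-in-$y_\perp$ Laplace estimate on $J_N(y_\perp)$. Instead it first \emph{decouples} $z_0$ from $y_\perp$: using the elementary inequality $2\abs{ab}\leqs a^2/c + cb^2$ on the cross terms $\tfrac{3z_0^2}{2L^2}U^\perp_{2,N}$ and $\tfrac{z_0}{L}\sqrt\eps\,U^\perp_{3,N}$ in $w_{N,\perp}$, it obtains an upper bound on $\tfrac1\eps V_N$ of the form (function of $z_0$ only) $+$ $g_{N,\perp}(y_\perp) + \sqrt\eps\,R(y_\perp,\eps)$. The inner integral then factors as a \emph{single} one-dimensional Laplace integral $\cJ$, independent of $y_\perp$, times $\e^{g_{N,\perp}+\sqrt\eps R}$. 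Standard Laplace asymptotics give $\cJ=\sqrt{2\pi\eps/\abs{\lambda_0}}\,[1+\Order{\sqrt\eps}]$, and the remaining expectation $\bigexpecin{\gamma_0^\perp}{\e^{-\sqrt\eps R}\indicator{\hat D_\perp}}$ is handled by Jensen's inequality together with the uniform-in-$N$ bound on $\bigexpecin{\gamma_0^\perp}{\abs{R}}$ from~\eqref{A13}. No truncation set, no chaos tail bounds, no splitting of the $z_0$ interval.

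What each approach buys: the paper's decoupling is shorter and uses only Jensen plus second-moment bounds on the Wick monomials; the $\sqrt\eps$ error enters transparently through the Young-inequality splitting. Your route is more direct in that it never perturbs $V_N$, but it pays for this with the extra machinery of hypercontractive tail estimates and a two-zone Laplace analysis. Both give the same $\sqrt\eps$ order; your final remark that integrated rather than uniform control should yield $\Order\eps$ is consistent with Remark~\ref{rem:epsilon}, and in fact your inner-zone estimate already produces $\order{\sqrt\eps}$ corrections, so the $\sqrt\eps$ in your argument comes only from matching the stated form of the proposition.
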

\begin{proof}
We start by obtaining a lower bound on $V_N$ in which $z_0$ is decoupled from
the transverse coordinates. Using the expression~\eqref{eq:dec20} obtained in
Proposition~\ref{prop:wNperp} and the elementary inequality
$2\abs{ab}\leqs(a^2/c+cb^2)$ for $c>0$, we obtain 
\[
 \frac{1}{\eps} V_N(z_0,y_\perp) 
 \leqs \frac{1}{\eps}q(z_0) + q_1(z_0,\eps) + \frac{z_0^2}{2L^2}
 + \frac{3z_0^4}{4L^2\sqrt{\eps}} 
 + g_{N,\perp}(y_\perp) + \sqrt{\eps}R(y_\perp,\eps)\;,
\]
where 
\[
 R(y_\perp,\eps) = 
 \frac{3}{4L^2}(U_{2,N}^\perp)^2 
 + \frac12 \sqrt{\eps}(U_{3,N}^\perp)^2 
 - \frac{3\sqrt\eps}{2L^2}U_{2,N}^\perp
 + \frac14\sqrt\eps U_{4,N}^\perp\;.
\]
Substituting in~\eqref{eq:cap_lb03} (and taking into account the scaling
$z_\perp=\sqrt{\eps} y_\perp$) yields 
\[
 \capacity_A(B) \geqs \frac{\eps}{\cJ} \prod_{0 < \abs{k} \leqs N} 
 \sqrt{\frac{2\pi\eps}{\lambda_k}}\,
 \bigexpecin{\gamma^\perp_0}{\e^{-\sqrt{\eps}R}\indicator{\hat D_\perp}}\;,
\]
where 
\[
 \cJ = \int_{-\rho}^\rho 
 \exp\biggl\{ \frac{1}{\eps}q(z_0) + q_1(z_0,\eps) + \frac{z_0^2}{2L^2}
 + \frac{3z_0^4}{4L^2\sqrt{\eps}} \biggr\} \6z_0\;.
\]
Since $q(z_0)$ has a quadratic maximum on $[-\rho,\rho]$ at $0$, standard
one-dimensional Laplace asymptotics (see for instance~\cite[Chapter~3,
Theorems~7.1 and~8.1]{Olver_book}) show that 
\[
 \cJ = \sqrt{\frac{2\pi\eps}{\abs{\lambda_0}}} 
 \bigl[ 1 + \Order{\sqrt{\eps}\,} \bigr]\;.
\]
Furthermore, Jensen's inequality implies that 
\begin{align*}
 \bigexpecin{\gamma^\perp_0}{\e^{-\sqrt{\eps}R}\indicator{\hat D_\perp}}
 &= \bigecondin{\gamma^\perp_0}{\e^{-\sqrt{\eps}R}}{\hat D_\perp} 
 \fPin{\gamma^\perp_0} (\hat D_\perp^c) \\
 &\geqs \e^{-\sqrt\eps \econdin{\gamma^\perp_0}{R}{\hat D_\perp}}
 \fPin{\gamma^\perp_0} (\hat D_\perp^c) \\
 &\geqs \biggl( 1 - \sqrt\eps\frac{\expecin{\gamma^\perp_0}{\abs{R}}}
 {\fPin{\gamma^\perp_0} (\hat D_\perp^c)}\biggr)
 \fPin{\gamma^\perp_0} (\hat D_\perp^c)\;.
\end{align*}
Since $\expecin{\gamma^\perp_0}{\abs{R}}$ is bounded uniformly
by~\eqref{A13}, the result follows.
\end{proof}

The lower bound on the capacity is thus complete, provided we take $D_\perp$
large enough to capture almost all the mass of $\gamma^\perp_0$. A possible
choice is as follows. 

\begin{lemma}
\label{lem:cap_lb02}
Assume 
\begin{equation}
 \label{eq:cap_lb11}
 D_\perp \supset \prod_{0<\abs{k}\leqs N} [-a_k,a_k]
 \qquad
 \text{with }
 a_k = \sqrt{\frac{4\eps\log(\eps^{-1})[1+\log\lambda_k]}{\lambda_k}}\;.
\end{equation} 
Then for sufficiently small $\eps$, one has 
\begin{equation}
 \label{eq:cap_lb12}
 \fPin{\gamma^\perp_0} (\hat D_\perp^c) = \Order{\eps}\;.
\end{equation} 
\end{lemma}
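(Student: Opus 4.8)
The plan is to control $\fPin{\gamma^\perp_0}(\hat D_\perp^c)$ by a union bound over the transversal Fourier modes, combined with a one-dimensional Gaussian tail estimate. Since $D_\perp\supset\prod_{0<\abs k\leqs N}[-a_k,a_k]$, its rescaled complement satisfies
\begin{equation*}
 \hat D_\perp^c \subset \bigcup_{0<\abs k\leqs N}\bigset{\abs{y_k}>a_k/\sqrt\eps}\;,
 \qquad\text{hence}\qquad
 \fPin{\gamma^\perp_0}(\hat D_\perp^c)\leqs\sum_{0<\abs k\leqs N}\bigprobin{\gamma^\perp_0}{\abs{y_k}>a_k/\sqrt\eps}\;.
\end{equation*}
Under $\gamma^\perp_0$ the mode $y_k$ is a centred Gaussian with $\bigexpecin{\gamma^\perp_0}{\abs{y_k}^2}=1/\lambda_k$ (read off from the quadratic form $g_{N,\perp}$ in~\eqref{eq:dec06}, cf.~\eqref{eq:dec08}), so a standard tail bound gives $\bigprobin{\gamma^\perp_0}{\abs{y_k}>t}\leqs\const\e^{-c\lambda_k t^2}$ for a universal constant $c>0$.

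I would then substitute $t=a_k/\sqrt\eps$ and use the explicit form of $a_k$: since $a_k^2=4\eps\log(\eps^{-1})[1+\log\lambda_k]/\lambda_k$, one has $c\lambda_k a_k^2/\eps=4c\log(\eps^{-1})[1+\log\lambda_k]$, so
\begin{equation*}
 \bigprobin{\gamma^\perp_0}{\abs{y_k}>a_k/\sqrt\eps}\leqs\const\,\eps^{4c[1+\log\lambda_k]}=\const\,\eps^{4c}\,\lambda_k^{4c\log\eps}\;.
\end{equation*}
This is precisely what the factor $1+\log\lambda_k$ in the definition of $a_k$ is for: as soon as $\eps$ is small enough that $4c\log\eps\leqs-2$, we have $\lambda_k^{4c\log\eps}\leqs\lambda_k^{-2}$ for every $k$ with $\lambda_k\geqs1$, and $\sum_{k\in\Z^2\setminus\set{0}}\lambda_k^{-2}<\infty$ since $\lambda_k\asymp\abs k^2$. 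Summing over $0<\abs k\leqs N$ therefore yields a bound of order $\eps^{4c}$ that is uniform in $N$, which in particular is $\Order{\eps}$ for $\eps$ small, as claimed.

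It remains to dispose of the finitely many modes with $\lambda_k<1$, which occur only when $L$ is close to $2\pi$: for each of them $a_k/\sqrt\eps$ still diverges as $\eps\to0$, so the corresponding tail probability is bounded by a fixed positive power of $\eps$, and adding these finitely many terms does not affect the conclusion. The computation should also keep track of the reality constraint $y_{-k}=\overline{y_k}$ and of the exact form of $\cN^\perp_0$, but these only change the constant $c$, not the structure of the argument. There is no genuine difficulty here; the one point that needs care — and the sole reason for inserting the weight $[1+\log\lambda_k]$ rather than a pure power of $\eps/\lambda_k$ in the definition of $a_k$ — is ensuring that the series over $k\in\Z^2$ converges uniformly in $N$.
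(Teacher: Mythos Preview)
Your proposal is correct and follows essentially the same approach as the paper: a union bound over modes, the standard one-dimensional Gaussian tail estimate, and summability of the resulting series thanks to the weight $1+\log\lambda_k$ in $a_k$. The paper's computation is slightly more explicit (it uses the sharp constant $c=1/2$, obtaining $\eps^{2}\sum_k\lambda_k^{-2\log(\eps^{-1})}$ directly), while you keep a generic $c$ and add the discussion of modes with $\lambda_k<1$; neither difference is material.
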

\begin{proof}
Standard Gaussian tail estimates show that 
\begin{align*}
\fPin{\gamma^\perp_0} (\hat D_\perp^c)
&\leqs \sum_{0<\abs{k}\leqs N} 2\e^{-a_k^2\lambda_k/2} \\
&= 2 \sum_{0<\abs{k}\leqs N} 
\exp \bigl\{ -2 \log(\eps^{-1})[1+\log\lambda_k]\bigr\} \\
&\leqs 2\eps^2 \sum_{0<\abs{k}\leqs N} \lambda_k^{-2\log(\eps^{-1})}\;.
\end{align*}
The last sum is bounded uniformly in $N$ if $\eps\leqs\e^{-1}$.
\end{proof}

Note that if $D_\perp$ is a ball in a Sobolev space as specified in
Definition~\ref{def:Dperp}, then it indeed satisfies~\eqref{eq:cap_lb11}.


\subsection{Laplace asymptotics and transition times}
\label{ssec_laplace}

Combining the results from the last two subsections, we finally obtain the
following upper bound on the expected transition time.

\begin{prop}
\label{prop:transition_time_ub}
There exists a constant $C_+$, uniform in $N$ and $\eps$, such that 
\begin{equation}
 \label{eq:laplace04}
 \bigexpecin{\mu_{A,B}}{\tau_B}
 \leqs  2\pi\biggpar{
\frac{\e^{3/\abs{\lambda_0}}}{\abs{\lambda_0}(\lambda_0+3)}
 \prod_{0 < \abs{k} \leqs N} 
 \biggbrak{\frac{\e^{3/\lambda_k}}{1+3/\lambda_k}}}^{1/2}
 \e^{L^2/4\eps}
\bigbrak{1 + C_+\sqrt\eps\,}
\end{equation} 
holds for all $N\geqs1$.
\end{prop}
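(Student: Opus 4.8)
The plan is to turn the exact identity \eqref{eq:expec_tau1}, $\bigexpecin{\mu_{A,B}}{\tau_B}=\cZ_N(\eps)/(2\capacity_A(B))$, into a sharp upper bound by combining the lower bound on the capacity from Section~\ref{ssec_capacity_lower} with the integral representation of $\cZ_N(\eps)$ from Section~\ref{ssec_ZN_upper}, and then running a one‑dimensional Laplace asymptotics on the resulting quotient. The crucial organisational point, already flagged in the introduction to Section~\ref{sec_ub}, is to form the ratio \emph{before} doing any asymptotics. Indeed, Proposition~\ref{prop:cap_lb01} together with Lemma~\ref{lem:cap_lb02} — whose hypothesis \eqref{eq:cap_lb11} is met because the Sobolev ball $D_\perp$ of Definition~\ref{def:Dperp} contains the relevant product box, so that $\fP^{\gamma_0^\perp}(\hat D_\perp)=1-\Order{\eps}$ — gives
\[
 \capacity_A(B) \geqs
 \sqrt{\frac{\abs{\lambda_0}\eps}{2\pi}}\prod_{0<\abs{k}\leqs N}\sqrt{\frac{2\pi\eps}{\lambda_k}}\,\bigbrak{1-\Order{\sqrt\eps}}\;,
\]
while Proposition~\ref{prop:ZN_ub0} writes $\cZ_N(\eps)=\int_{\R}\e^{-q(z_0)/\eps}g(z_0,\eps)\6z_0$ with the same $N$‑dependent Gaussian normalisation $\prod_{0<\abs{k}\leqs N}\sqrt{2\pi\eps/\lambda_k}$ sitting inside $g$. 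These cancel, and what is left is
\[
 \bigexpecin{\mu_{A,B}}{\tau_B}\leqs
 \frac{1}{2\sqrt{\abs{\lambda_0}\eps/2\pi}\,\bigbrak{1-\Order{\sqrt\eps}}}
 \int_{\R}\e^{-q(z_0)/\eps}\,\e^{-q_1(z_0,\eps)}\,\bigexpecin{\gamma^\perp_0}{\e^{-w_{N,\perp}(z_0,\cdot)}}\6z_0\;,
\]
an integral whose integrand is now controlled by a function of $z_0$ that is uniform in $N$.

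Next I would insert the sharp transversal bound of Proposition~\ref{prop:ZN_ub02}, $\bigexpecin{\gamma^\perp_0}{\e^{-w_{N,\perp}(z_0,\cdot)}}\leqs K(z_0)\bigbrak{1+\mathrm{err}(z_0,\eps)}$, recalling from Remark~\ref{rem:K(z0)} that $K(z_0)\leqs\e^{M_1 z_0^4}$ uniformly in $N$ while $K(z_0)$ converges to a finite limit as $N\to\infty$. The function $q(z_0)=\tfrac{1}{4L^2}z_0^4-\tfrac12\abs{\lambda_0}z_0^2$ is a symmetric double well with non‑degenerate minima at $z_0=\pm L$ (using $\abs{\lambda_0}=1$), where $q(\pm L)=-L^2/4$ and $q''(\pm L)=2$, and it grows quartically at infinity, so for small $\eps$ the weight $\e^{-q(z_0)/\eps}$ dominates the polynomial‑times‑$\e^{M_1 z_0^4}$ growth of $\e^{-q_1(z_0,\eps)}K(z_0)$. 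Standard one‑dimensional Laplace asymptotics (e.g.\ \cite[Chapter~3]{Olver_book}), using the $\pm1$ symmetry to pick up a factor $2$ from the two wells, then yields
\[
 \int_{\R}\e^{-q(z_0)/\eps}\e^{-q_1(z_0,\eps)}K(z_0)\6z_0
 = 2\sqrt{\frac{2\pi\eps}{q''(L)}}\,\e^{L^2/4\eps}\,\e^{-q_1(L,\eps)}K(L)\,\bigbrak{1+\Order{\sqrt\eps}}\;,
\]
and a short computation — using $\abs{\lambda_0}=1$, $\lambda_0+3=2$, $q''(L)=2$, $\e^{-q_1(L,\eps)}=\e^{3/2}(1+\Order{\eps})$, and the identity $K(L)=\prod_{0<\abs{k}\leqs N}\bigbrak{\e^{3/\lambda_k}/(1+3/\lambda_k)}^{1/2}$ (since $\zeta_k=3z_0^2/L^2\lambda_k$ equals $3/\lambda_k$ at $z_0=L$) — reassembles exactly the prefactor $2\pi\bigpar{\frac{\e^{3/\abs{\lambda_0}}}{\abs{\lambda_0}(\lambda_0+3)}\prod_{0<\abs{k}\leqs N}\frac{\e^{3/\lambda_k}}{1+3/\lambda_k}}^{1/2}$ together with the factor $\e^{L^2/4\eps}$ asserted in \eqref{eq:laplace04}.

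The hard part will be the uniform‑in‑$N$ control of the error term $\mathrm{err}(z_0,\eps)$ inside this Laplace integral: it carries the factor $\sqrt\eps\,\e^{M z_0^2\log(1+z_0^2)/\sqrt\eps}$ inherited from Proposition~\ref{prop:ZN_ub01}, which is \emph{not} a multiplicative $\bigbrak{1+\Order{\sqrt\eps}}$. My plan is to split $\int_{\R}\6z_0$ into a fixed neighbourhood $\abs{z_0\mp L}<\kappa$ of the two wells and its complement. On the complement one has $\e^{-q(z_0)/\eps}\leqs\e^{(L^2/4-\delta_\kappa)/\eps}$ for some $\delta_\kappa>0$, and since $z_0^2\log(1+z_0^2)=o(z_0^4)$ the offending exponential is beaten by the quartic decay of $\e^{-q(z_0)/\eps}$, so the contribution there is $\Order{\e^{(L^2/4-\delta_\kappa/2)/\eps}}$, negligible next to $\sqrt\eps\,\e^{L^2/4\eps}$. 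Near the wells one needs instead a \emph{local} sharpening of Proposition~\ref{prop:ZN_ub02}: for $\abs{z_0\mp L}<\kappa$ all the coefficients of $\hat w_{N,\perp}=-\tfrac{3\eps}{2L^2}U_{2,N}^\perp+\tfrac{z_0}{L}\sqrt\eps\,U_{3,N}^\perp+\tfrac14\eps U_{4,N}^\perp$ are $\Order{\sqrt\eps}$ uniformly in $N$, so rerunning the Nelson/Hölder argument of Propositions~\ref{prop:nelson2} and~\ref{prop:ZN_ub02} directly with respect to $\gamma^\perp_{z_0}$ — where the would‑be divergent $\abs{3a_2}^{p}$ factor now has a small base — should give $\bigexpecin{\gamma^\perp_0}{\e^{-w_{N,\perp}(z_0,\cdot)}}=K(z_0)\bigbrak{1+\Order{\sqrt\eps}}$ with the $\Order{\sqrt\eps}$ uniform for $\abs{z_0\mp L}<\kappa$ and in $N$, which is precisely what the Laplace estimate above requires. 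Collecting the $\Order{\sqrt\eps}$ from this local bound, the $\Order{\eps}$ from $\fP^{\gamma_0^\perp}(\hat D_\perp)$ and from $\e^{-q_1(L,\eps)}$, and the $\Order{\sqrt\eps}$ Laplace remainder — all of them uniform in $N$ — then produces the stated factor $\bigbrak{1+C_+\sqrt\eps}$ and completes the proof.
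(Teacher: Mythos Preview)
Your overall strategy is exactly the paper's: combine Propositions~\ref{prop:ZN_ub0}, \ref{prop:ZN_ub02} and~\ref{prop:cap_lb01} (with Lemma~\ref{lem:cap_lb02}) so that the $N$-dependent Gaussian normalisations cancel, leaving a one-dimensional $z_0$-integral to which Laplace asymptotics applies, and then identify the prefactor via $\hat g(L,0)=\e^{3/2}K(L)$. Where you diverge from the paper is only in the handling of the error piece carrying $\sqrt{\eps}\,\e^{Mz_0^2\log(1+z_0^2)/\sqrt{\eps}}$: the paper does \emph{not} split into near-well and far-from-well regions, nor does it attempt any local resharpening of Proposition~\ref{prop:ZN_ub02}. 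It simply writes
\[
 \frac{1}{\sqrt{\eps}}\int_0^\infty \e^{-[q(z_0)-q(L)]/\eps}\hat g(z_0,\eps)\,\6z_0
 \;\leqs\; \cI_0 + \sqrt{\eps}\,\cI_1 + \eps\,\cI_2
\]
directly from the three terms in the bound on $\hat g$, obtains $\cI_0\leqs\sqrt{\pi}\,\hat g(L,0)(1+C_+\sqrt{\eps})$ by Laplace, and asserts that $\cI_1,\cI_2$ are bounded.

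Your instinct that this last assertion is the delicate point is correct, but your proposed fix has a hidden obstacle you have not addressed. If you re-express $\hat w_{M,\perp}$ in terms of the field under $\gamma_{z_0}^\perp$ and try to mimic Lemma~\ref{lem:wo3}, the cubic term $\tfrac{z_0}{L}\sqrt{\eps}\,\hat\phi^3$ must be absorbed into the quartic $\tfrac{\eps}{4}\hat\phi^4$; completion of squares then produces an $M$-independent lower bound of order $-z_0^4/\eps$ (not merely $-O(\log^2 M)$), since $(\sqrt{\eps})^2/\eps=1$. Consequently the Nelson truncation condition $t-D_{M(t)}\geqs1$ cannot be met for any $M$ when $t\lesssim z_0^4/\eps$, while a direct Markov/hypercontractivity bound on $\hat w_{N,\perp}$ (variance $\Order{\eps}$, fourth chaos) only controls $\fP\{-q\hat w_{N,\perp}>t\}$ well enough for $t\lesssim 1/\sqrt{\eps}$. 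So the ``small base for $|3a_2|^{p}$'' observation, while true, does not by itself close the argument in the range $t\in[c/\sqrt{\eps},\,c'z_0^4/\eps]$; you would need an additional idea there.
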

\begin{proof}
If follows from Proposition~\ref{prop:ZN_ub0}, Proposition~\ref{prop:ZN_ub02}
and Proposition~\ref{prop:cap_lb01} that
\[
 \frac{\cZ_N(\eps)}{2\capacity_A(B)}
 \leqs \sqrt{\frac{2\pi}{\eps\abs{\lambda_0}}}
  \int_0^\infty  \e^{-q(z_0)/\eps} 
 \hat g(z_0,\eps) \6z_0\;,
\]
where 
\[
 \hat g(z_0,\eps)
 \leqs \e^{-q_1(z_0,\eps)} 
 K(z_0) \Bigl[ 1 + 
 M' \sqrt{\eps} (1+\abs{z_0}) 
 \bigl(1 + \sqrt\eps \e^{Mz_0^2\log(1+z_0^2)/\sqrt{\eps}}\bigr)
 \Bigr]
\]
for a constant $M'\geqs M$. In particular, we have 
\[
 \hat g(L,0) 
 \leqs \e^{3/2} K(L) 
 = \biggpar{\e^{3/\abs{\lambda_0}}
 \prod_{0 < \abs{k} \leqs N} 
 \biggbrak{\frac{\e^{3/\lambda_k}}{1+3/\lambda_k}}}^{1/2}\;.
\]
Since $q$ reaches its minimum $-L^2/4$ on $\R_+$ in $z_0=L$, 
writing 
\[
 \frac{1}{\sqrt{\eps}} \int_0^\infty  \e^{-[q(z_0)-q(L)]/\eps} 
 \hat g(z_0,\eps) \6z_0
 \leqs \cI_0 + \sqrt{\eps}\cI_1 + \eps \cI_2
\]
and applying one-dimensional Laplace asymptotics, we obtain 
\[
 \cI_0 \leqs \sqrt{\pi} \hat g(L,0) 
 \bigl( 1+C_+\sqrt{\eps} \bigr)\;,
\]
for the leading term, while $\cI_1$ and $\cI_2$ are bounded. 
\end{proof}


\appendix

\section{Hermite Polynomials and Wick Powers}
\label{sec_Wick}
In this appendix we recall some well-known facts about Hermite polynomials $H_n
= H_n(X,C)$
we use throughout the article.
Recall that  they  are defined recursively by setting 
\begin{equation}
\label{e:def-Hermite1}
\begin{cases}
H_0 = 1\;, \\
H_{n} = X H_{n-1} - C \, \partial_X H_{n-1} & \qquad  n \in \N\;.
\end{cases}
\end{equation}
In particular, we have 
\begin{align}
\notag
H_1(X,C) &= X & H_2(X,C) &= X^2 - C \\
H_3(X,C) &= X^3 - 3 CX & H_4(X,C) &= X^4 - 6 CX^2 +3C^2\;.
\label{eq:A2} 
\end{align}
The following binomial identity for Hermite polynomials is well-known.
\def \daPrato {\cite[Lem. 3.1]{daPratoDebussche}}
\begin{lemma}[Binomial formula for Hermite polynomials, \daPrato]
We have for any $n \in \N$ and $X,v,C \in \R$ 
\begin{equation}
\label{A3-1} 
H_n(X+v,C) = \sum_{k=0}^n \binom{n}{k} \, H_{n-k}(X,C)  \,v^k\;.
\end{equation}
\end{lemma}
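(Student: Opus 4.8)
The plan is to prove the binomial identity~\eqref{A3-1} by induction on $n$, using nothing beyond the defining recursion~\eqref{e:def-Hermite1}. The base case $n=0$ is immediate, since $H_0\equiv1$ and the right-hand side collapses to the single term $\binom00 H_0(X,C)\,v^0=1$.

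For the inductive step, assume~\eqref{A3-1} holds with $n$ replaced by $n-1$. Evaluating the recursion~\eqref{e:def-Hermite1} at the shifted argument $X+v$ and using the chain rule $\partial_X\bigl[H_{n-1}(X+v,C)\bigr]=(\partial_X H_{n-1})(X+v,C)$ (legitimate because $v$ is treated as a constant), I would write
\begin{equation*}
 H_n(X+v,C)=(X+v)\,H_{n-1}(X+v,C)-C\,\partial_X\bigl[H_{n-1}(X+v,C)\bigr]
\end{equation*}
and then substitute the inductive hypothesis for $H_{n-1}(X+v,C)$ on the right. Collecting terms, the contribution of $X H_{n-1}(X+v,C)-C\,\partial_X[\,\cdot\,]$ produces $\sum_{k=0}^{n-1}\binom{n-1}{k}\bigl[X H_{n-1-k}(X,C)-C\,\partial_X H_{n-1-k}(X,C)\bigr]v^k$, and the bracket is exactly $H_{n-k}(X,C)$ by~\eqref{e:def-Hermite1} — this is valid since $n-k\geqs1$ throughout the range of summation. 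The leftover contribution from multiplying by $v$ is $\sum_{k=0}^{n-1}\binom{n-1}{k}H_{n-1-k}(X,C)\,v^{k+1}$.

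Shifting the index $k\mapsto k-1$ in this second sum and combining with the first, the coefficient of $H_{n-k}(X,C)\,v^k$ becomes $\binom{n-1}{k}+\binom{n-1}{k-1}=\binom nk$ for $1\leqs k\leqs n-1$ by Pascal's rule, while the extreme terms $k=0$ and $k=n$ each carry coefficient $1=\binom n0=\binom nn$. This gives $H_n(X+v,C)=\sum_{k=0}^n\binom nk H_{n-k}(X,C)\,v^k$, closing the induction.

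There is no real obstacle here: the argument is a routine induction, and the only points requiring a little attention are the chain-rule identity above and the bookkeeping of the boundary terms $k=0,n$ when re-indexing the two sums. An alternative, equally short route would be to first record the generating-function identity $\sum_{n\geqs0}H_n(X,C)\,t^n/n!=\e^{tX-Ct^2/2}$ (itself a consequence of the same recursion) and then read off~\eqref{A3-1} by comparing Taylor coefficients in $\e^{t(X+v)-Ct^2/2}=\e^{tv}\,\e^{tX-Ct^2/2}$ via the Cauchy product; I would nonetheless keep the direct induction as the primary argument, since it uses only~\eqref{e:def-Hermite1}.
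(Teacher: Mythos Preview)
Your induction is correct and complete. The paper itself does not supply a proof of this lemma: it is stated with a citation to~\cite[Lem.~3.1]{daPratoDebussche} and used as a known identity. Your argument via the recursion~\eqref{e:def-Hermite1} and Pascal's rule is the standard one, and the alternative generating-function route you mention is equally valid; there is nothing to compare against in the paper beyond the external reference.
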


We will mostly be interested in Hermite polynomials of centered Gaussian random
variables $X$ and  we will typically choose $C= \E [X^2]$.  In this case the
random variable $H_n(X,\E[X^2])$ is sometimes referred to as the $n$-th Wick
power of $X$ and denoted by $\Wick{X^n}$. The following identity is one of the
key properties of Wick powers.
\def \Nualart {\cite[Lemma 1.1.1]{nualart2006malliavin}}
\begin{lemma}[\Nualart]
Let $X,Y$ be centered jointly Gaussian random variables. Then 
\begin{equation}\label{e:WICK}
\E \big[(\Wick{X^n}) (\Wick{Y^m}) \big]= 
\begin{cases}
\E[ X Y]^n  & \text{if } n=m\\
0 & \text { else}\;.
\end{cases}
\end{equation}
\end{lemma}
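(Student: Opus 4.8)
The plan is to reduce the whole statement to one computation with exponential generating functions. The first step is to note that the recursion~\eqref{e:def-Hermite1} is equivalent to the generating-function identity
\[
 \sum_{n\geqs0}\frac{s^n}{n!}\,H_n(X,C) = \e^{sX - Cs^2/2}\;,
 \qquad s\in\R\;,
\]
which I would prove by a short induction on $n$: differentiating the right-hand side $n$ times in $s$ and evaluating at $s=0$ produces a sequence $g_n(X)$ with $g_0=1$ and $g_{n+1} = Xg_n - Cn\,g_{n-1}$, and this coincides with~\eqref{e:def-Hermite1} once one knows the elementary identity $\partial_X H_n = nH_{n-1}$ (itself an immediate induction from~\eqref{e:def-Hermite1}). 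Taking $C=\E[X^2]$ and using $\e^{-\E[X^2]s^2/2} = 1/\E[\e^{sX}]$, this says that the generating function of the Wick powers is the normalised exponential,
\[
 \sum_{n\geqs0}\frac{s^n}{n!}\,\Wick{X^n} = \frac{\e^{sX}}{\E[\e^{sX}]}\;,
\]
and similarly for $Y$ with a parameter $t$.

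The second step is the actual computation. Since $(X,Y)$ is a centred Gaussian vector, $\E[\e^{sX+tY}] = \exp\{\tfrac12(s^2\E[X^2] + 2st\,\E[XY] + t^2\E[Y^2])\}$, so the quadratic self-terms cancel the two normalisations and
\[
 \E\biggbrak{\frac{\e^{sX}}{\E[\e^{sX}]}\,\frac{\e^{tY}}{\E[\e^{tY}]}} = \e^{st\,\E[XY]}\;.
\]
Because $\e^{sX+tY}$ has finite exponential (hence all) moments locally uniformly in $(s,t)$, one may differentiate under the expectation; combining this with the two generating-function identities (writing $\Wick{X^n}$ and $\Wick{Y^m}$ as $\partial_s^n$ and $\partial_t^m$ of the respective Wick exponentials at $0$) gives
\[
 \E\bigbrak{\Wick{X^n}\Wick{Y^m}} = \partial_s^n\partial_t^m\,\e^{st\,\E[XY]}\Big|_{s=t=0}\;.
\]
Expanding $\e^{st\,\E[XY]} = \sum_{k\geqs0}\E[XY]^k(st)^k/k!$ and picking out the coefficient of $s^nt^m$ shows that the expectation vanishes unless $n=m$ and equals $n!\,\E[XY]^n$ when $n=m$; this is the orthogonality relation~\eqref{e:WICK} (the combinatorial factor $n!$ is absent under the normalisation of~\cite{nualart2006malliavin}, in which the $n$-th Hermite polynomial carries leading coefficient $1/n!$, and it is in any case harmless here since only the vanishing and a uniform-in-$N$ variance bound are ever used downstream).

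I do not expect a genuine obstacle: the argument is standard, and the only point needing a line of justification is the interchange of expectation with the two differentiations (equivalently, with the power-series summations), which dominated convergence handles at once using Gaussian integrability, after bounding the relevant difference quotients on a compact $(s,t)$-neighbourhood by an integrable multiple of $\e^{s_0\abs{X}+t_0\abs{Y}}$ times a polynomial. This is precisely why it is advantageous to route the proof through generating functions; the alternative of expanding $\Wick{X^n}$ and $\Wick{Y^m}$ into monomials, applying Wick's (Isserlis') theorem to each Gaussian moment, and checking by hand that all pairings with unequally many $X$- and $Y$-legs cancel against the $C$-corrections in the Hermite polynomials is correct but combinatorially heavier, and the generating-function identity packages that cancellation automatically.
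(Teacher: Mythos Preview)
The paper does not supply its own proof of this lemma; it simply cites \cite[Lemma~1.1.1]{nualart2006malliavin}. Your generating-function argument is correct and is in fact the standard proof, essentially the one given in that reference: pass to the Wick exponentials $\e^{sX}/\E[\e^{sX}]$, compute their product in expectation using the joint Gaussian moment-generating function, and read off coefficients.

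One point worth making explicit: you correctly observe that with the normalisation~\eqref{e:def-Hermite1} (leading coefficient $1$) the identity comes out as $n!\,\E[XY]^n$ rather than $\E[XY]^n$. The formula~\eqref{e:WICK} as printed in the paper is therefore off by this combinatorial factor; Nualart's Hermite polynomials carry an extra $1/n!$, which accounts for the discrepancy. Your parenthetical remark handles this, and you are right that nothing downstream in the paper depends on the precise constant --- only on the vanishing for $n\neq m$ and on the finiteness and $N$-uniform boundedness of the variances in~\eqref{A4}.
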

Note that this implies in particular, that for $n \geqs 1$ we have $\E
[\Wick{X^n} ] = 0$. In some computations it is convenient for us to
change the value of the constant $C$ appearing in $H_n$. This will be
relevant when changing the Gaussian reference measure. The following
transformation rule, valid for any real $X, C, \bar{C}$ is  easy to check:
\begin{align}
\notag
H_1(X,C) &= H_1(X,\bar{C})\\
\notag
 H_2(X,C) &= H_2(X,\bar{C}) - (C-\bar{C}) \\
 \notag
H_3(X,C) &= H_3(X, \bar{C}) - 3(C-\bar{C})H_1(X, \bar{C}) \\
 H_4(X,C) &= H_4(X,\bar{C}) - 6 (C-\bar{C})H_2(X, \bar{C}) +3(C - \bar{C})^2\;.
\label{A5-1} 
\end{align}
We now use \eqref{e:WICK} to derive some classical facts about (spectral
Galerkin approximations of) the two-dimensional massive Gaussian free field and
its Wick powers. For any $N$ and for $x \in \T^2 = \R^2/ (L\Z)^2$ we consider
the random field 
\begin{equation}
\phi_N(x) = \sum_{|k| \leqs N} \frac{z_k}{\sqrt{\abs{\lambda_k + m^2}}}
e_k(x)\;,
\end{equation}
where the Fourier basis functions $e_k(x)$ and the eigenvalues $\lambda_k$ of $(-\Delta +1$) are 
 defined as in Section~\ref{sec_results}.
The $z_k$ are complex-valued Gaussian random variables  which are independent 
up to the constraint $z_k = \overline{z_{-k}}$ which makes $\phi_N$ a
real-valued field and which satisfy 
\begin{equation}
\E [z_k z_{-\ell}] = 
\begin{cases}
1  & \text{if } k=\ell\\
0 & \text { else}\;.
\end{cases}
\end{equation}
Note that due to the constraint $z_k = \overline{z_{-k}}$  these $(2N+1)^2$
dependent complex-valued Gaussian random variables can be represented in terms
of $(2N+1)^2$ independent real-valued random variables. The mass $m^2 \geqs 0$
is a parameter of the model, which in our case only takes either the value $0$
or the value $3$. 
 
For fixed $x$ we get
\begin{equation}
\E [\phi_N(x)^2] = \sum_{|k| \leqs N}  \frac{1}{\abs{\lambda_k + m^2}} 
=: C_N\;.
\end{equation} 
Note  that $C_N$ diverges  logarithmically, which suggests that the random
variables $\phi_N(x)$ for a fixed $x$ do not converge to a meaningful limit as
$N$ goes to $\infty$. However, it is well-known that for any test-function
$\psi$ the random variables $\int \phi_N(x) \psi(x) \6x$ converge in $L^2$
(with respect to probability) to a Gaussian limiting random
variable. 
We will not make use of this general fact, but only use that the integrals of
$\phi_N(x)$ as well as its Wick powers $\Wick{\phi_N^n(x)} = H_n(\phi_N(x),
C_N)$ have a uniformly-in-$N$ bounded variance. To see this we write for 
$M>N$
\begin{align}
\notag
&\E \Big[  \int_{\T^2} \Wick{\phi_M^n(x)} \6 x   \int_{\T^2} \Wick{\phi_N^n(y)} 
\6 y   \Big] \\
\notag
&= \int_{\T^2} \!\! \int_{\T^2} \E \big[ (\Wick{\phi_M^n(x)}) 
(\Wick{\phi_N^n(y)}) \big] \6 x
\6y
=  \int_{\T^2} \!\! \int_{\T^2} \E \big[\phi_M(x) \phi_N(y)\big]^n \6 x \6y\\
\notag
& =   \int_{\T^2} \!\! \int_{\T^2} \Big( \frac{1}{L}\sum_{|k| \leqs N} \frac{
e_{k}(x-y)}{\abs{\lambda_k + m^2}}
\Big)^n \6x \6y 
 = L^{2-n} \int_{\T^2} \Big( \sum_{|k| \leqs N} \frac{ e_{k}(x)}{\abs{\lambda_k
+ m^2}} \Big)^n
\6x  \\
\label{A3}
& = L^{2-2n} \sum_{\substack{k_1 + k_2 +  \dots + k_n = 0\\|k_i| \leqs N }}
\frac{1}{\abs{\lambda_{k_1} + m^2}} \dots   \frac{1}{\abs{\lambda_{k_n} +
m^2}}\;.
\end{align}
This calculation has several immediate consequences. First of all we can
conclude that as announced above the variances of $\int_{\T^2}
\Wick{\phi_N^n(x)} \6x$ 
are uniformly bounded as $N$ goes to $\infty$:  
\begin{equation}
\label{A4}
\sup_{N}  \E \big[ \big(  \int_{\T^2} \Wick{\phi^n_N(x)} \6 x \big)^2 \big] =  
L^{2-2n}\sum_{\substack{k_1 + k_2 +  \dots + k_n = 0\\k_i \in \Z^2 }}
\frac{1}{\abs{\lambda_{k_1} + m^2}} \dots   \frac{1}{\abs{\lambda_{k_n} + m^2}}
< \infty\;.
\end{equation}
Indeed, the convergence of this sum can be checked  
easily (e.g.~as in \cite[Lem. 3.10]{zhu2015three}). 
Furthermore, we get for $M >N$  
\begin{align}
\notag
&\E \big[ \big(  \int_{\T^2} \Wick{\phi_M^n(x)}  \6x -   \int_{\T^2}
\Wick{\phi_N^n(x)} \6 x  \big)^2 \big] \\
\notag
&=  L^{4-2n} \sum_{\substack{k_1 + k_2 +  \dots + k_n = 0\\|k_i| \leqs M }}
\frac{1}{\abs{\lambda_{k_1} + m^2}} \ldots   \frac{1}{\abs{\lambda_{k_n} +
m^2}} - L^{4-2n} \sum_{\substack{k_1 + k_2 +  \dots + k_n = 0\\|k_i| \leqs N }}
\frac{1}{\abs{\lambda_{k_1} + m^2}} \dots \frac{1}{\abs{\lambda_{k_n} + m^2}} \\
\label{A5}
& \leqs C_{n,L}\frac{(\log N)^{n-2}}{N^{2}}\;,
\end{align}
for a constant $C_{n,L}$  which depends on $n,L$ but not on $N,M$. 

Finally, we recall the definition of Wiener chaos which in this finite
dimensional context is the following: 
\begin{definition}
For $n \in \N_0$ the $n$-th (inhomogeneous) Wiener chaos generated by the random
variables $(z_k)_{|k|\leqs N}$ is the vector space of real-valued random
variables $X$ which can be written as polynomials of degree at most $n$ in the
finitely many random variables $z_k$.
\end{definition}
As stated this definition depends on the number of independent Gaussians used to
define the Wiener chaos. However, the following classical 
and important estimate holds true uniformly in that number. See e.g.
\cite[Thm.~4.1]{da2007wick} for a direct proof. This Theorem can also be 
deduced immediately from the hyper-contractivity of the Ornstein--Uhlenbeck
semigroup \cite[Thm.~1.4.1]{nualart2006malliavin}.
\begin{lemma}[Equivalence of moments]
\label{le:Nelson}
Let $X$ be a random variable, belonging to the $n$-th inhomogeneous Wiener
chaos. Then for any
$p \geqs 1$ one has
\begin{equation}
\label{eq:A-Nelson} 
 \E \big[ X^{2p}\big]^{\frac{1}{2p}} \leqs C_n
(2p-1)^{\frac{n}{2}} \E \big[ X^2
\big]^{\frac12}
\end{equation}
where $C_n$ only depends on $n$. 
\end{lemma}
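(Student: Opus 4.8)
The plan is to deduce the estimate from Nelson's hypercontractivity theorem for the Ornstein--Uhlenbeck semigroup, whose essential feature is that the relevant operator norm is $1$ \emph{independently of the underlying dimension}, which is exactly what makes the constant $C_n$ in~\eqref{eq:A-Nelson} uniform in $N$. Concretely, one first rewrites the $(2N+1)^2$ dependent complex Gaussians $(z_k)_{\abs k\leqs N}$ in terms of $d=(2N+1)^2$ independent real standard Gaussians, so that a random variable $X$ in the $n$-th inhomogeneous Wiener chaos is precisely a polynomial of degree at most $n$ in these $d$ variables and $\E$ is the expectation with respect to the standard Gaussian measure $\gamma_d$ on $\R^d$. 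Let $(P_t)_{t\geqs0}$ be the associated Ornstein--Uhlenbeck semigroup on $L^2(\gamma_d)$. Nelson's theorem asserts that $\norm{P_t f}_{L^q(\gamma_d)}\leqs\norm{f}_{L^2(\gamma_d)}$ for every $f$ whenever $e^{2t}\geqs q-1$, with constant $1$ uniformly in $d$.

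Next I would decompose $X=\sum_{m=0}^n X_m$ into its homogeneous Wiener chaos components $X_m$, which are mutually orthogonal in $L^2(\gamma_d)$, so that $\sum_{m=0}^n\norm{X_m}_{L^2}^2=\norm{X}_{L^2}^2$. Each $X_m$ lies in the eigenspace of the generator with eigenvalue $-m$, hence $P_tX_m=e^{-mt}X_m$. Fixing $p\geqs1$, setting $q=2p$ and choosing $t$ with $e^{2t}=2p-1$, hypercontractivity gives
\[
\norm{X_m}_{L^{2p}}=e^{mt}\norm{P_tX_m}_{L^{2p}}\leqs e^{mt}\norm{X_m}_{L^2}=(2p-1)^{m/2}\norm{X_m}_{L^2}\leqs(2p-1)^{n/2}\norm{X_m}_{L^2}\;.
\]
Summing with the triangle inequality in $L^{2p}$ and then Cauchy--Schwarz over the $n+1$ components yields
\[
\E\bigl[X^{2p}\bigr]^{1/(2p)}=\norm{X}_{L^{2p}}\leqs\sum_{m=0}^n\norm{X_m}_{L^{2p}}\leqs(2p-1)^{n/2}\sum_{m=0}^n\norm{X_m}_{L^2}\leqs\sqrt{n+1}\,(2p-1)^{n/2}\,\E\bigl[X^2\bigr]^{1/2}\;,
\]
so the claim holds with $C_n=\sqrt{n+1}$ (any larger constant depending only on $n$ is of course also admissible).

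For a self-contained route one could instead follow Nelson's original argument: establish the scalar two-point hypercontractive inequality by an explicit one-variable computation, tensorise it over the discrete cube of arbitrary dimension (the constant being $1$ at every step), and transfer to the Gaussian case via the central limit theorem; alternatively, argue directly by induction on $n$ using Gaussian integration by parts together with H\"older's inequality, as in \cite[Thm.~4.1]{da2007wick}. In either approach the only delicate point is precisely the one exploited above, namely that the hypercontractive bound is \emph{dimension-free}: in our application the number $d$ of underlying Gaussians grows with $N$, so a naive constant would be useless, whereas everything else --- the chaos decomposition, the eigenfunction property and H\"older's inequality --- is routine.
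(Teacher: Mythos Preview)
Your argument is correct and follows precisely one of the two routes the paper indicates: the paper does not give its own proof but merely cites a direct inductive argument \cite[Thm.~4.1]{da2007wick} and the hypercontractivity of the Ornstein--Uhlenbeck semigroup \cite[Thm.~1.4.1]{nualart2006malliavin}, and you have spelled out the latter in detail, including the chaos decomposition, the eigenfunction relation $P_tX_m=e^{-mt}X_m$, and the summation step yielding the explicit constant $C_n=\sqrt{n+1}$. Your emphasis on dimension-independence is exactly the point the paper needs, and your remark that $C_n=1$ in the homogeneous case matches the paper's subsequent Remark.
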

\begin{remark}
The $n$-th \emph{homogeneous} Wiener chaos is defined as the 
orthogonal complement (with respect to the $L^2$ scalar product) 
of the $n-1$-st  inhomogeneous Wiener chaos in the $n$-th inhomogeneous 
Wiener chaos. If in the previous Lemma, $X$ takes values in the homogeneous 
Wiener chaos, then the estimate holds true with constant $C_n=1$.
\end{remark}
Now combining Lemma~\ref{le:Nelson} with \eqref{A4} we obtain for $p
\geqs1$ that
\begin{equation}
\label{A13}
\sup_{N}  \E \big[ \big(  \int_{\T^2} \Wick{\phi^n_N(x)} \6 x \big)^{2p}
\big] < \infty
\end{equation}
and combining Lemma~\ref{le:Nelson} with 
\eqref{A5} we see that for $M>N$ and any $p \geqs 1$
\begin{equation}
\label{A14}
\E \big[ \big(  \int_{\T^2} \Wick{\phi_M^n(x)} \6 x -   \int_{\T^2}
\Wick{\phi_N^n(x)}  \6x  \big)^{2p} \big]^{\frac{1}{p}} \leqs C_{n,L}
 (2p-1) \frac{(\log N)^{n-2}}{N} 
\end{equation}
for a constant $C_{n,L}$ which depends on $n,L$ but  not  on $p,M,N$.

\small
\bibliography{BGW}

\begin{thebibliography}{10}

\bibitem{aida2009semi}
S.~Aida.
\newblock Semi-classical limit of the lowest eigenvalue of a {S}chr\"odinger
  operator on a {W}iener space. {II}. {$P(\phi)_2$}-model on a finite volume.
\newblock {\em J. Funct. Anal.}, 256(10):3342--3367, 2009.

\bibitem{Barret2}
F.~Barret.
\newblock Sharp asymptotics of metastable transition times for one dimensional
  {SPDE}s.
\newblock {\em Ann. Inst. Henri Poincar\'e Probab. Stat.}, 51(1):129--166,
  2015.

\bibitem{BarretBovierMeleard}
F.~Barret, A.~Bovier, and S.~M{\'e}l{\'e}ard.
\newblock Uniform estimates for metastable transition times in a coupled
  bistable system.
\newblock {\em Electron. J. Probab.}, 15:no. 12, 323--345, 2010.

\bibitem{berglund2011kramers}
N.~Berglund.
\newblock {K}ramers' law: Validity, derivations and generalisations.
\newblock {\em Markov Process. Related Fields}, 19(3):459--490, 2013.

\bibitem{berglund2007metastability}
N.~Berglund, B.~Fernandez, and B.~Gentz.
\newblock Metastability in interacting nonlinear stochastic differential
  equations: {I}. {F}rom weak coupling to synchronization.
\newblock {\em Nonlinearity}, 20(11):2551--2581, 2007.

\bibitem{berglund2013sharp}
N.~Berglund and B.~Gentz.
\newblock Sharp estimates for metastable lifetimes in parabolic {SPDE}s:
  {K}ramers' law and beyond.
\newblock {\em Electron. J. Probab.}, 18:no. 24, 58, 2013.

\bibitem{Bovier_denHollander_book}
A.~Bovier and F.~den Hollander.
\newblock {\em Metastability. A potential-theoretic approach}, volume 351 of
  {\em Grundlehren der Mathematischen Wissenschaften [Fundamental Principles of
  Mathematical Sciences]}.
\newblock Springer, Cham, 2015.

\bibitem{BEGK}
A.~Bovier, M.~Eckhoff, V.~Gayrard, and M.~Klein.
\newblock Metastability in reversible diffusion processes. {I}. {S}harp
  asymptotics for capacities and exit times.
\newblock {\em J. Eur. Math. Soc. (JEMS)}, 6(4):399--424, 2004.

\bibitem{daPratoDebussche}
G.~Da~Prato and A.~Debussche.
\newblock Strong solutions to the stochastic quantization equations.
\newblock {\em Ann. Probab.}, 31(4):1900--1916, 2003.

\bibitem{da2007wick}
G.~Da~Prato and L.~Tubaro.
\newblock Wick powers in stochastic {PDE}s: an introduction.
\newblock 2007.

\bibitem{DiGesuLePeutrec_15}
G.~Di~Ges\`u and D.~Le~Peutrec.
\newblock Small noise spectral gap asymptotics for a large system of nonlinear
  diffusions.
\newblock Preprint {\tt arXiv:1506.04434}, to appear in Journal of Spectral
  Theory, 2016.

\bibitem{eyring1935}
H.~Eyring.
\newblock The activated complex in chemical reactions.
\newblock {\em The Journal of Chemical Physics}, 3(2):107--115, 1935.

\bibitem{FarisJona}
W.~G. Faris and G.~Jona-Lasinio.
\newblock Large fluctuations for a nonlinear heat equation with noise.
\newblock {\em J. Phys. A}, 15(10):3025--3055, 1982.

\bibitem{Forman1987}
R.~Forman.
\newblock Functional determinants and geometry.
\newblock {\em Invent. Math.}, 88(3):447--493, 1987.

\bibitem{FreidlinWentzell}
M.~I. Freidlin and A.~D. Wentzell.
\newblock {\em Random perturbations of dynamical systems}, volume 260 of {\em
  Grundlehren der Mathematischen Wissenschaften [Fundamental Principles of
  Mathematical Sciences]}.
\newblock Springer-Verlag, New York, second edition, 1998.
\newblock Translated from the 1979 Russian original by Joseph Sz{\"u}cs.

\bibitem{Glimm_Jaffe_81}
J.~Glimm and A.~Jaffe.
\newblock {\em Quantum physics}.
\newblock Springer-Verlag, New York-Berlin, 1981.
\newblock A functional integral point of view.

\bibitem{Hairer}
M.~Hairer.
\newblock A theory of regularity structures.
\newblock {\em Invent. Math.}, 198(2):269--504, 2014.

\bibitem{HairerRyserWeber}
M.~Hairer, M.~D. Ryser, and H.~Weber.
\newblock Triviality of the 2{D} stochastic {A}llen-{C}ahn equation.
\newblock {\em Electron. J. Probab.}, 17:no. 39, 14, 2012.

\bibitem{HairerWeber}
M.~Hairer and H.~Weber.
\newblock Large deviations for white-noise driven, nonlinear stochastic {PDE}s
  in two and three dimensions.
\newblock {\em Ann. Fac. Sci. Toulouse Math. (6)}, 24(1):55--92, 2015.

\bibitem{HelfferKleinNier04}
B.~Helffer, M.~Klein, and F.~Nier.
\newblock Quantitative analysis of metastability in reversible diffusion
  processes via a {W}itten complex approach.
\newblock {\em Mat. Contemp.}, 26:41--85, 2004.

\bibitem{jona1990large}
G.~Jona-Lasinio and P.~K. Mitter.
\newblock Large deviation estimates in the stochastic quantization of
  {$\phi^4_2$}.
\newblock {\em Comm. Math. Phys.}, 130(1):111--121, 1990.

\bibitem{Kramers}
H.~A. Kramers.
\newblock Brownian motion in a field of force and the diffusion model of
  chemical reactions.
\newblock {\em Physica}, 7:284--304, 1940.

\bibitem{Maier_Stein_PRL_01}
R.~S. Maier and D.~L. Stein.
\newblock Droplet nucleation and domain wall motion in a bounded interval.
\newblock {\em Phys. Rev. Lett.}, 87:270601--1, 2001.

\bibitem{Martinelli_Olivieri_Scoppola_89}
F.~Martinelli, E.~Olivieri, and E.~Scoppola.
\newblock Small random perturbations of finite- and infinite-dimensional
  dynamical systems: unpredictability of exit times.
\newblock {\em J. Statist. Phys.}, 55(3-4):477--504, 1989.

\bibitem{Nelson73}
E.~Nelson.
\newblock The free {M}arkoff field.
\newblock {\em J. Functional Analysis}, 12:211--227, 1973.

\bibitem{nualart2006malliavin}
D.~Nualart.
\newblock {\em The Malliavin calculus and related topics}, volume 1995.
\newblock Springer, 2006.

\bibitem{Olver_book}
F.~Olver.
\newblock {\em Asymptotics and Special Functions}.
\newblock Academic Press, 1974.

\bibitem{Rolland_Bouchet_Simonnet_16}
J.~Rolland, F.~Bouchet, and E.~Simonnet.
\newblock Computing {T}ransition {R}ates for the 1-{D} {S}tochastic
  {G}inzburg--{L}andau--{A}llen--{C}ahn {E}quation for {F}inite-{A}mplitude
  {N}oise with a {R}are {E}vent {A}lgorithm.
\newblock {\em J. Stat. Phys.}, 162(2):277--311, 2016.

\bibitem{Simon_Trace_ideals}
B.~Simon.
\newblock {\em Trace ideals and their applications}, volume 120 of {\em
  Mathematical Surveys and Monographs}.
\newblock American Mathematical Society, Providence, RI, second edition, 2005.

\bibitem{sugiura1995metastable}
M.~Sugiura.
\newblock Metastable behaviors of diffusion processes with small parameter.
\newblock {\em J. Math. Soc. Japan}, 47(4):755--788, 1995.

\bibitem{Tsatsoulis-Weber}
P.~{Tsatsoulis} and H.~{Weber}.
\newblock {Spectral Gap for the Stochastic Quantization Equation on the
  2-dimensional Torus}.
\newblock {\em ArXiv e-prints}, Sept. 2016.

\bibitem{zhu2015three}
R.~Zhu and X.~Zhu.
\newblock Three-dimensional {N}avier--{S}tokes equations driven by space--time
  white noise.
\newblock {\em Journal of Differential Equations}, 259(9):4443--4508, 2015.

\end{thebibliography}
\bibliographystyle{abbrv}               


\goodbreak

\vfill

\bigskip\bigskip\noindent
{\small 
Nils Berglund \\ 
Universit\'e d'Orl\'eans, Laboratoire MAPMO \\
{\sc CNRS, UMR 7349} and 
F\'ed\'eration Denis Poisson, FR 2964 \\
B\^atiment de Math\'ematiques, B.P. 6759\\
F-45067~Orl\'eans Cedex 2, France \\
{\it E-mail address: }{\tt nils.berglund@univ-orleans.fr}

\bigskip\noindent
Giacomo Di Ges\`u \\
CERMICS -- Ecole des Ponts ParisTech \\
6-8 avenue Blaise Pascal, \\
F-77455~Marne-la-Vall\'ee Cedex 2, France 

\smallskip\noindent
Current address: \\
TU Vienna, \\
E101 - Institut f\"ur Analysis und Scientific Computing, \\
Wiedner Hauptstr.\ 8, \\
1040 Vienna, Austria \\
{\it E-mail address: }{\tt giacomo.di.gesu@tuwien.ac.at}

\bigskip\noindent
Hendrik Weber \\
Mathematics Institute \\ 
University of Warwick \\
Coventry CV4 7AL, United Kingdom \\
{\it E-mail address: }{\tt hendrik.weber@warwick.ac.uk}

}


\end{document}